\begin{document}
\newcommand{\Chi}{\raisebox{2pt}{\ensuremath{\chi}}}

\def\C{\mathbb{C}}
\def\N{\mathbb{N}}
\def\Z{\mathbb{Z}}
\def\R{\mathbb{R}}
\def\T{\mathbb{T}}

\def\H{\mathcal{H}}
\def\B{\mathcal{B}}
\def\F{\mathcal{F}}
\def\K{\mathcal{K}}
\def\TT{\mathcal{T}}
\def\QQ{\mathcal{Q}}
\def\AA{\mathcal{A}}
\def\EE{\mathcal{E}}
\def\L{\mathcal{L}}
\def\O{\mathcal{O}}
\def\OO{\mathcal{O}}

\def\supp{\operatorname{supp}}
\def\Aut{\operatorname{Aut}}
\def\End{\operatorname{End}}
\def\Ad{\operatorname{{Ad}}}
\def\Tr{\operatorname{Tr}}
\def\id{\operatorname{id}}
\def\lsp{\operatorname{span}}
\def\UHF{\operatorname{UHF}}
\def\tr{\operatorname{tr}}
\def\clsp{\operatorname{\overline{span}}}
\def\To{\mathcal{T}}

\newcommand{\under}{\backslash}
\newcommand{\qregular}{regular }

\renewcommand{\theenumi}{\alph{enumi}} 
\renewcommand{\theenumii}{\roman{enumii}}

\newtheorem{thm}{Theorem}  [section]
\newtheorem{cor}[thm]{Corollary}
\newtheorem{lemma}[thm]{Lemma}
\newtheorem{prop}[thm]{Proposition}
\theoremstyle{definition}
\newtheorem{defn}[thm]{Definition}
\newtheorem{remark}[thm]{Remark}
\newtheorem{example}[thm]{Example}
\newtheorem{keyexample}[thm]{Key Example}
\newtheorem{remarks}[thm]{Remarks}

\numberwithin{equation}{section}

\title[Stacey crossed products associated to Exel systems]
{\boldmath{Stacey crossed products associated to Exel systems}}

\author[an Huef]{Astrid an Huef}
\author[Raeburn]{Iain Raeburn}
\address{Department of Mathematics and Statistics\\ University of Otago\\ PO Box 56\\Dunedin 9054\\ New Zealand}
\email{astrid@maths.otago.ac.nz, iraeburn@maths.otago.ac.nz}

\begin{abstract}
There are many different crossed products by an endomorphism of a $C^*$-algebra, and constructions by Exel and Stacey have proved particularly useful. Here we show that every Exel crossed product is isomorphic to a Stacey crossed product, though by a different endomorphism of a different $C^*$-algebra. We apply this result to a variety of Exel systems, including those associated to shifts on the path spaces of directed graphs.
\end{abstract}

\subjclass[2000]{46L55}
\keywords{$C^*$-algebra, endomorphism, transfer operator, Exel crossed product, Stacey crossed product, graph algebra, gauge action}
\thanks{Some of the results in this paper, including most of those in Sections~\ref{sec:main} and \ref{sec:graph}, were contained in the preliminary version \cite{aHRexel}, which we posted on the \texttt{arXiv} in July 2011.} 
\date{2 November 2011}
\maketitle

\section{Introduction}
Everybody agrees that when $\alpha$ is an automorphism of a unital $C^*$-algebra $A$, the crossed product $A\rtimes_\alpha\Z$ is generated by a unitary element $u$ and a representation $\pi:A\to A\rtimes_{\alpha}\Z$ which satisfy the covariance relation
\begin{equation}\label{covauto}
\pi(\alpha(a))=u\pi(a)u^*\ \text{ for $a\in A$.}
\end{equation}
The covariance relation can be reformulated as $\pi(\alpha(a))u=u\pi(a)$ or $u^*\pi(\alpha(a))u=\pi(a)$, and, when $\alpha$ is an automorphism, these reformulations are equivalent to \eqref{covauto}. When $\alpha$ is an endomorphism, though, these reformulations are no longer equivalent, and give different crossed products. Thus there are several crossed products based on covariance relations in which $u$ is an isometry \cite{Mur, S, E}, and still more crossed products in which $u$ is a partial isometry \cite{Lin,ABL}. The crossed products of Stacey \cite{S} and Exel \cite{E} have proved to be particularly useful, and have been extensively studied, for example in \cite{P, BKR, Mur2} and \cite{EV, BR, EaHR}.

Stacey's crossed product $B\times_\alpha\N$ is generated by an isometry $s$ and a representation $\pi$ of $A$ satisfying $\pi(\alpha(a))=s\pi(a)s^*$. His motivating example was the endomorphism $\alpha$ of the UHF core $A$ in the Cuntz algebra $\OO_n$ described by Cuntz in \cite{C}, for which we recover $\OO_n$ as $A\times_\alpha \N$, and is especially useful for \emph{corner endomorphisms} which map $A$ onto the corner $\alpha(1)A\alpha(1)$ (see also  \cite{P, BKR}). Stacey's construction has been extended to semigroups of endomorphisms, and these semigroup crossed products were used to study Toeplitz algebras \cite{ALNR,LR1}; they have since been used extensively in the analysis of $C^*$-algebras arising in number theory (see \cite{LR2, Bren, L, Li, LNT}, for example).

Exel's construction depends on the choice of a transfer operator $L:A\to A$ for $\alpha$, which is a positive linear map satisfying $L(\alpha(a)b)=aL(b)$. He uses $L$ to build a Hilbert bimodule $M_L$ over $A$, and then his crossed product $A\rtimes_{\alpha,L}\N$ is closely related to the Cuntz-Pimsner algebra $\OO(M_L)$ of this bimodule (the precise relationship is described in \cite{BR}). The motivating example for Exel's construction is the endomorphism of $C(\{1,\cdots,n\}^\infty)$ induced by the backward shift, for which averaging over the $n$ preimages of each point gives a transfer operator $L$ such that $\OO_n\cong C(\{1,\cdots,n\}^\infty)\rtimes_{\alpha,L}\N$. More generally, Exel realised each Cuntz-Krieger algebra  as a crossed product by the corresponding subshift of finite type, thereby giving a very direct proof that the Cuntz-Krieger algebra is determined up to isomorphism by the subshift. Exel's construction has attracted a good deal of attention in connection with irreversible dynamics \cite{EV, EaHR, CS}, and has also been extended to semigroups of endomorphisms with interesting consequences \cite{Lar, BaHLR}.

Here we study a large family of Exel systems $(A,\alpha,L)$, and describe a general procedure which builds a corner endomorphism $\beta$ of another $C^*$-algebra $B$ such that Exel's $A\rtimes_{\alpha,L}\N$ is naturally isomorphic to Stacey's $B\times_\beta \N$. In our motivating example $(C(\{1,\cdots,n\}^\infty),\alpha,L)$, we recover Cuntz's description of $\O_n$ as the Stacey crossed product by an endomorphism of the UHF core (see Remark~\ref{remCuntz}). Our investigations of this construction have led us to some interesting new examples of Exel and Stacey crossed products associated with graph algebras and Cuntz algebras.

We begin with short sections in which we describe the two kinds of crossed products of interest to us, and a family of relative Cuntz-Pimsner algebras associated to Exel systems. Then in \S\ref{sec:main}, we give the details of our construction of corner endomorphisms, first in the generality of relative Cuntz-Pimsner algebras, then specialising to Exel crossed products and Cuntz-Pimsner algebras. Then we derive a six-term exact sequence in $K$-theory from a result of Paschke \cite{P2} about Stacey crossed products, and compare our construction to a previous one of Exel \cite{E2}. 

In \S\ref{sec:graph}, we consider a directed graph $E$ and the Exel system $(C^*(E^\infty),\alpha,L)$ introduced in \cite{BRV}, and obtain a new description of the graph algebra $C^*(E)$ as a Stacey crossed product of the core (which generalises results of R\o rdam and Kwa\'sniewski for finite graphs \cite{Ror,Kwa}). This led us to revisit Exel systems involving other endomorphisms of the UHF core in Cuntz algebras, which we do in \S\ref{sec:Cuntz}. In \S\ref{sec:graph} we needed a detailed analysis of the core in the $C^*$-algebra $C^*(E)$ of a column-finite graph, and we describe this analysis in an appendix.



\section{Exel systems}

An endomorphism $\alpha$ of a $C^*$-algebra $A$ is \emph{extendible} if it extends to a strictly continuous endomorphism $\overline{\alpha}$ of $M(A)$. Nondegenerate endomorphisms, for example, are automatically extendible with $\overline{\alpha}(1)=1$. In this paper we are interested in \emph{Exel systems} $(A,\alpha,L)$ of the kind studied in \cite{BRV}, which means that $\alpha$ is an extendible endomorphism of a $C^*$-algebra $A$  and  $L:A\to A$ is a  positive linear map which  extends to a  positive linear map $\overline{L}:M(A)\to M(A)$ such that 
\begin{equation}\label{transferoponmofa}
L(\alpha(a)m)=a\overline{L}(m)\ \text{ for $a\in A$ and $m\in M(A)$.}
\end{equation}
Equation~\eqref{transferoponmofa} implies that $\overline{L}$ is strictly continuous, and we then assume further that $\overline{L}(1_{M(A)})=1_{M(A)}$ (but not that $\overline{\alpha}$ is unital).  Following Exel \cite{E}, we say that $L$ is a \emph{transfer operator} for $(A, \alpha)$.

We write $M_L$ for the Hilbert bimodule over $A$ used in \cite{BRV}, where the construction  of \cite{E,BR} was extended to non-unital $A$. Briefly,  $A$ is given a bimodule structure by $a\cdot m=am$ and $m\cdot b=m\alpha(b)$, and the pairing $\langle m\,,\, n\rangle= L(m^*n)$ defines a pre-inner product on $A$.  Modding out by the elements of norm $0$ and completing gives a right-Hilbert $A$--$A$ bimodule $M_L$ (or \emph{correspondence} over $A$). We denote by $q:A\to M_L$ the canonical map of $A$ onto a dense sub-bimodule of $M_L$, and by $\phi$ the homomorphism of $A$ into $\L(M_L)$ implementing the left action. 

We are interested in several $C^*$-algebras associated to Exel systems, and more specifically to the bimodule $M_L$.

\subsection{Relative Cuntz-Pimsner algebras} Following \cite{FR}, a representation $(\psi,\pi)$ of $M_L$ in a $C^*$-algebra $B$  consists of a linear map $\psi:M_L\to B$ and a homomorphism $\pi:A\to B$ such that
\[
\psi(m\cdot a)=\psi(m)\pi(a),\quad \psi(m)^*\psi(n)=\pi(\langle m\,,\, n\rangle), \text{\ and\ }\psi(a\cdot m)=\pi(a)\psi(m)
\]
for $a\in A$ and $m,n\in M_L$. By \cite[Proposition~1.8]{FR}, a representation $(\psi,\pi)$ of $M_L$ gives a representation $(\psi^{\otimes i},\pi)$ of $M_L^{\otimes i}:=M_L\otimes_A\cdots\otimes_A M_L$ such that $\psi^{\otimes i}(m_1\otimes_A\cdots\otimes_A m_i)=\psi(m_1)\cdots\psi(m_i)$. The Toeplitz algebra $\TT(M_L)$  is the $C^*$-algebra generated by a universal  representation $(j_M,j_A)$ of $M_L$, and \cite[Lemma~2.4]{FR} says that
\[
\TT(M_L)=\clsp\big\{j_M^{\otimes i}(m)j_M^{\otimes j}(n)^*:\;i,j\in\N,\;m\in M_L^{\otimes i},\;n\in M_L^{\otimes j}\big\}.
\]
There is  a strongly continuous \emph{gauge action} $\gamma:\T\to\Aut \TT(M_L)$ such that $\gamma_z(j_A(a))=j_A(a)$ and $\gamma_z(j_M(m))=zj_M(m)$ \cite[Proposition~1.3]{FR}.

A representation $(\psi,\pi)$ of $M_L$ in $B$ gives  a homomorphism $(\psi,\pi)^{(1)}:\K(M_L)\to B$ such that $(\psi,\pi)^{(1)}(\Theta_{m,n})=\psi(m)\psi(n)^*$ for every rank-one operator $\Theta_{m,n}:p\mapsto m\cdot\langle n\,,\, p\rangle$. Following \cite[\S1]{FMR}, if $J$ is an ideal of $A$ contained in $\phi^{-1}(\K(M_L))$, then we view the \emph{relative Cuntz-Pimsner algebra} $\O(J,M_L)$ of \cite{MS} as the quotient of $\TT(M_L)$ by the ideal generated by
\[
\big\{j_A(a)-(j_M, j_A)^{(1)}(\phi(a)): a\in J\big\}.
\]
We write $Q$ or $Q_J$ for the quotient map.  Then $(k_M,k_A):=(Q_J\circ j_M,Q_J\circ j_A)$ is universal for  representations $(\psi,\pi)$ which are \emph{coisometric} on $J$ (that is, satisfy $\pi|_J= (\psi,\pi)^{(1)}\circ\phi|_J$).  If $J=\{0\}$ then $\O(J,M_L)$ is just $\TT(M_L)$; if $J=\phi^{-1}(\K(M_L))$ then $\O(J,M_L)$ is the \emph{Cuntz-Pimsner algebra} $\O(M_L)$ of \cite{Pim}, and representations that are coisometric on $\phi^{-1}(\K(M_L))$ are called \emph{Cuntz-Pimsner covariant}.

It follows from \cite[Lemma~2.4]{FR} that each $\O(J,M_L)$ carries a gauge action $\gamma$ of $\T$ such that $Q_J$ is equivariant, and  the fixed-point algebra or \emph{core}  is
\[
\O(J,M_L)^\gamma=\clsp\big\{k_M^{\otimes i}(m)k_M^{\otimes i}(n)^*:m,n\in M_L^{\otimes i},\;i\in\N\big\}.
\]

\subsection{Exel's crossed product} As in \cite{BRV}, a \emph{Toeplitz-covariant representation} of $(A,\alpha,L)$ in a $C^*$-algebra $B$ consists of a nondegenerate homomorphism $\pi:A\to B$ and an element $S\in M(B)$ such that
\[S\pi(a)=\pi(\alpha(a))S\quad\text{and}\quad
S^*\pi(a)S=\pi(L(a)).
\]
The \emph{Toeplitz crossed product} $\TT(A,\alpha,L)$ is generated by a universal Toeplitz-covariant representation $(i,s)$ (or, more correctly, by $i(A)\cup i(A)s$).  By \cite[Proposition~3.1]{BRV}, there is a map $\psi_s:M_L\to \TT(A,\alpha,L)$ such that $\psi_s(q(a))=i(a)s$, and $(\psi_s,i)$ is a  representation of $M_L$. Set 
\[
K_\alpha:=\overline{A\alpha(A)A}\cap \phi^{-1}(\K(M_L)).
\]
 In \cite[\S4]{BRV}, the \emph{Exel crossed product} $A\rtimes_{\alpha,L}\N$ of a possibly non-unital $C^*$-algebra $A$ by $\N$ is the quotient of $\TT(A,\alpha,L)$ by the ideal generated by
\[
\big\{i(a)-(\psi_s,i)^{(1)}(\phi(a)): a\in K_\alpha\big\}.
\]
We write $\QQ$ for the quotient map of $\TT(A,\alpha,L)$ onto $A\rtimes_{\alpha,L}\N$, and $(j,t):=(\QQ\circ i,\overline{\QQ}(s))$. There is a \emph{dual action} $\hat\alpha$ of $\T$ on $A\rtimes_{\alpha,L}\N$ such that $\hat\alpha_z(j(a))=j(a)$ and $\hat\alpha_z(t)=zt$.

Theorem~4.1 of \cite{BRV} says that there is an isomorphism $\theta$ of $\O(K_\alpha,M_L)$ onto $A\rtimes_{\alpha,L}\N$ such that $\theta\circ k_A=\QQ\circ i$ and $\theta\circ k_M=\QQ\circ \psi_s$.

\section{Stacey crossed products}

Suppose that $\alpha $ is an endomorphism of a $C^*$-algebra $A$. A \emph{Stacey-covariant representation} of $(A,\alpha)$ in a $C^*$-algebra $B$ consists of a nondegenerate homomorphism $\pi:A\to B$ and an isometry $V\in M(B)$ such that $\pi(\alpha(a))=V\pi(a)V^*$. Stacey showed in \cite[\S3]{S} that there is a crossed product $A\times_\alpha\N$ which is generated by a universal Stacey-covariant representation $(i_A,v)$. If $(\pi,V)$ is a Stacey-covariant representation of $(A,\alpha)$ in $B$, then we write $\pi\times V$ for the nondegenerate homomorphism of $A\times_\alpha \N$ into $B$ such that $(\pi\times V)\circ i_A=\pi$ and $(\pi\times V)(v)=V$. (Stacey called $A\times_\alpha\N$  ``the multiplicity-one crossed product" of $(A,\alpha)$.) 

The crossed product $A\times_\alpha\N$ carries a dual action $\hat\alpha$ of $\T$, which is characterised by $\hat\alpha_z(i_A(a))=i_A(a)$ and $\hat\alpha_z(v)=zv$. The following ``dual-invariant uniqueness theorem'' says that this dual action identifies $A\times_\alpha\N$ among $C^*$-algebras generated by Stacey-covariant representations of $(A,\alpha)$. It was basically proved in \cite[Proposition~2.1]{BKR} (modulo the correction made in \cite{ALNR}).  

\begin{prop}\label{diut}
Suppose that $\alpha$ is an endomorphism of a $C^*$-algebra $A$, and $(\pi,V)$ is a Stacey-covariant representation of $(A,\alpha)$ in a $C^*$-algebra $D$. If $\pi$ is faithful and there is a strongly continuous action $\gamma:\T\to \Aut D$ such that $\gamma_z(\pi(a))=\pi(a)$ and $\gamma_z(V)=zV$, then $\pi\times V$ is faithful on $A\times_\alpha\N$.
\end{prop}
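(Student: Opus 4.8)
The plan is to run a dual-invariant uniqueness argument: reduce faithfulness of $\Phi:=\pi\times V$ to faithfulness on the fixed-point algebra $(A\times_\alpha\N)^{\hat\alpha}$, and then bootstrap to the whole algebra using faithful conditional expectations coming from $\hat\alpha$ and $\gamma$. The first step is to pin down the relevant structure. From the covariance relation one gets the commutation identities $v\,i_A(a)=i_A(\alpha(a))v$ and $v^*i_A(\alpha(a))=i_A(a)v^*$, together with $v^nv^{*n}=\overline{i_A}(\overline\alpha^{\,n}(1))$ (using that $i_A$ is nondegenerate and $v^*v=1$). These let me reduce every product of generators to the form $v^{*m}i_A(a)v^n$, so that $A\times_\alpha\N=\clsp\{v^{*m}i_A(a)v^n:m,n\in\N,\ a\in A\}$, and, since $v^{*m}i_A(a)v^n$ has $\hat\alpha$-degree $n-m$, the core is $(A\times_\alpha\N)^{\hat\alpha}=\clsp\{v^{*m}i_A(a)v^m\}$. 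The identity $v^{*m}i_A(a)v^m=v^{*(m+1)}i_A(\alpha(a))v^{m+1}$ shows that the subspaces $B_m:=\{v^{*m}i_A(a)v^m:a\in A\}$ are $C^*$-subalgebras with $B_m\subseteq B_{m+1}$, so the core is the closure of the increasing union $\bigcup_m B_m$.

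Next I would prove that $\Phi$ is injective on the core. It suffices to check injectivity on each $B_m$, since an injective $*$-homomorphism is isometric and the $B_m$ increase. Suppose $\Phi(v^{*m}i_A(a)v^m)=V^{*m}\pi(a)V^m=0$. Multiplying on the left by $V^m$ and on the right by $V^{*m}$ and using $V^mV^{*m}=\overline\pi(\overline\alpha^{\,m}(1))$ gives $\pi\big(\overline\alpha^{\,m}(1)\,a\,\overline\alpha^{\,m}(1)\big)=0$, so $\overline\alpha^{\,m}(1)\,a\,\overline\alpha^{\,m}(1)=0$ because $\pi$ is faithful. But the very same multiplier computation performed in $A\times_\alpha\N$ yields $v^{*m}i_A(a)v^m=v^{*m}i_A\big(\overline\alpha^{\,m}(1)\,a\,\overline\alpha^{\,m}(1)\big)v^m=0$. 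Hence $\Phi|_{B_m}$ is injective, and therefore $\Phi$ is isometric on $\bigcup_m B_m$ and injective on the core. The point worth emphasising here is that the core is built entirely from the isometry and covariance relations, which hold in \emph{any} Stacey-covariant representation, so faithfulness of $\Phi$ on the core requires nothing beyond faithfulness of $\pi$.

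Finally I would assemble the pieces via averaging. Integrating against normalised Haar measure on $\T$ produces a faithful conditional expectation $E:=\int_\T\hat\alpha_z(\,\cdot\,)\,dz$ of $A\times_\alpha\N$ onto its core, faithfulness being the standard fact that a continuous nonnegative integrand with zero integral vanishes identically. Because $\gamma_z\circ\Phi=\Phi\circ\hat\alpha_z$ (the two agree on the generators $i_A(a)$ and $v$ by hypothesis on $\gamma$), $\gamma$ restricts to $C:=C^*(\Phi(A\times_\alpha\N))$ and the corresponding expectation $E^\gamma:=\int_\T\gamma_z(\,\cdot\,)\,dz$ satisfies $E^\gamma\circ\Phi=\Phi\circ E$. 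Now if $\Phi(b)=0$ then $\Phi(b^*b)=0$, so $\Phi(E(b^*b))=E^\gamma(\Phi(b^*b))=0$; since $E(b^*b)$ lies in the core, where $\Phi$ is injective by the previous step, $E(b^*b)=0$, and faithfulness of $E$ forces $b^*b=0$, i.e.\ $b=0$. Thus $\Phi=\pi\times V$ is faithful.

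The main obstacle is the structural analysis of the first paragraph: pushing the commutation identities through to identify the spanning set and the inductive-limit description of the core demands careful bookkeeping with the extensions $\overline\alpha$, $\overline{i_A}$ and the multiplier projections $v^nv^{*n}$ when $A$ is non-unital, and it is precisely these non-unital details that the correction of \cite{ALNR} cited in the statement is designed to handle.
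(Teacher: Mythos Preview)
Your proof is correct and takes essentially the same approach as the paper's: establish faithfulness on the fixed-point algebra and then bootstrap via the intertwined conditional expectations coming from $\hat\alpha$ and $\gamma$. The paper compresses these two steps into citations of \cite[Lemma~2.2]{BKR} (the expectation/averaging argument) and \cite[Lemma~1.5]{ALNR} (faithfulness on the core via exactly your inductive-limit description $\overline{\bigcup_m B_m}$ and the sandwich $V^{*m}\pi(a)V^m=0\Rightarrow \overline{\alpha}^m(1)\,a\,\overline{\alpha}^m(1)=0$), which you have unpacked explicitly.
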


\begin{proof}
The conditions on $\gamma$ say that $(\pi\times V)(\hat\alpha_z(b))=\gamma_z((\pi\times V)(b))$ for all $b\in A\times_\alpha\N$. Thus
\begin{align*}
\Big\|(\pi\times V)\Big(\int_{\T}\hat\alpha_z(b)\,dz\Big)\Big\|&=\Big\|\int_{\T}(\pi\times V)(\hat\alpha_z(b))\,dz\Big\|=\Big\|\int_{\T}\gamma_z((\pi\times V)(b))\,dz\Big\|\\
&\leq \int_{\T}\|\gamma_z((\pi\times V)(b))\|\,dz=\int_{\T}\|(\pi\times V)(b)\|\,dz\\
&=\|(\pi\times V)(b)\|,
\end{align*}
We now take $(B,\beta)=(A\times_\alpha\N,\hat\alpha)$, and apply \cite[Lemma~2.2]{BKR} to $(B,\beta)$. We have just verified the hypothesis (2) of \cite[Lemma~2.2]{BKR}. The other hypothesis (1) asks for $\pi\times V$ to be faithful on the fixed-point algebra $B^\beta=(A\times_\alpha\N)^{\hat\alpha}$. However, the proof of \cite[Lemma~1.5]{ALNR} uses neither that $A$ is unital nor the estimate (ii) in \cite[Theorem~1.2]{ALNR}, and hence we can deduce from that proof that $\pi\times V$ is faithful on $B^\beta$. Thus \cite[Lemma~2.2]{BKR} applies, and the result follows.
\end{proof}

\section{Exel crossed products as Stacey crossed products}\label{sec:main}

The goal of this section is the following theorem and some corollaries.

\begin{thm}\label{relCPalg=St}
Suppose that $(A,\alpha,L)$ is an Exel system, and $J$ is an ideal of $A$ contained in $\phi^{-1}(\K(M_L))$.  
 There is a unique isometry $V\in M(\O(J,M_L))$ such that $k_M(q(a))=k_A(a)V$ for $a\in A$, and $\Ad V$ restricts to an endomorphism $\alpha'$ of the core $C_J:=\O(J,M_L)^\gamma$ such that 
\begin{equation}\label{defalpha'}
\alpha'\big(k_M^{\otimes i}(a\cdot m)k_M^{\otimes i}(b\cdot n)^*\big)
=k_M^{\otimes (i+1)}(q(\alpha(a))\otimes_Am)k_M^{\otimes (i+1)}(q(\alpha(b))\otimes_An)^* 
\end{equation}
for $a,b\in A$ and $m,n\in M_L^{\otimes i}$.
Further,  $\alpha'$ is extendible with $\overline{\alpha'}(1)=VV^*$, is injective and has range $\overline{\alpha'}(1)C_J\overline{\alpha'}(1)$.
Finally,  $(\id,V)$ is a Stacey-covariant representation of $(C_J,\alpha')$ such that $\id\times V$ is an isomorphism of the Stacey crossed product $C_J\times_{\alpha'}\N$ onto $\O(J,M_L)$.
\end{thm}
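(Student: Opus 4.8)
The plan is to invoke the universal property of the Stacey crossed product to manufacture the homomorphism $\id\times V$, and then to show it is an isomorphism by verifying surjectivity by hand and deducing injectivity from the dual-invariant uniqueness theorem, Proposition~\ref{diut}. First I would check that $(\id,V)$ is a Stacey-covariant representation of $(C_J,\alpha')$ in $\O(J,M_L)$. The inclusion $\id\colon C_J\hookrightarrow\O(J,M_L)$ is nondegenerate because $k_A(A)\subseteq C_J$ and $k_A$ is already nondegenerate on $A$; that $V$ is an isometry is the first assertion of the theorem; and the covariance relation $\id(\alpha'(c))=V\id(c)V^*$ is immediate, since $\alpha'$ is by construction the restriction of $\Ad V$ to $C_J$. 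Hence the universal property yields a nondegenerate homomorphism $\id\times V\colon C_J\times_{\alpha'}\N\to\O(J,M_L)$ with $(\id\times V)\circ i_{C_J}=\id$ and $(\id\times V)(v)=V$.

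For surjectivity I would observe that the range of $\id\times V$ is a $C^*$-subalgebra of $\O(J,M_L)$ containing both $C_J$ and $V$. Since $k_A(a)$ is gauge-invariant it lies in $C_J$, so the defining relation $k_M(q(a))=k_A(a)V$ puts every $k_M(q(a))$ in the range. Taking products then places each $k_M^{\otimes i}(m)$ in the range, first for simple tensors $m=q(a_1)\otimes_A\cdots\otimes_A q(a_i)$ using $k_M^{\otimes i}(m)=k_M(q(a_1))\cdots k_M(q(a_i))$, and then for all $m\in M_L^{\otimes i}$ by linearity and continuity; likewise each $k_M^{\otimes j}(n)^*$ lies in the range. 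As $\O(J,M_L)$ is the closed span of the products $k_M^{\otimes i}(m)k_M^{\otimes j}(n)^*$, the range is all of $\O(J,M_L)$.

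For injectivity I would apply Proposition~\ref{diut} with $D=\O(J,M_L)$, $\pi=\id$ and the isometry $V$, taking the gauge action $\gamma$ of $\T$ as the required $\T$-action. The map $\id$ is faithful, being an inclusion; $\gamma_z$ fixes each element of $C_J=\O(J,M_L)^\gamma$ by the definition of the core, so $\gamma_z(\id(c))=\id(c)$; and applying $\gamma_z$ to $k_M(q(a))=k_A(a)V$ gives $z\,k_A(a)V=k_A(a)\,\overline{\gamma_z}(V)$ for all $a$, whence $\overline{\gamma_z}(V)=zV$. All hypotheses of Proposition~\ref{diut} are then met, so $\id\times V$ is faithful; combined with surjectivity this makes it the asserted isomorphism.

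The routine verifications (nondegeneracy of $\id$ and the generation argument for surjectivity) account for most of the writing, but the only genuinely delicate point is the passage to multipliers needed to read off $\overline{\gamma_z}(V)=zV$ from the generator relation, since $V$ lives in $M(\O(J,M_L))$ rather than in $\O(J,M_L)$ itself. There I would use the strict continuity of $\overline{\gamma_z}$ together with an approximate identity $(e_\lambda)$ for $A$, noting that $k_A(e_\lambda)$ converges strictly to $1$; everything else is forced once the earlier parts of the theorem and Proposition~\ref{diut} are in hand.
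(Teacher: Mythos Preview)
Your argument for the final assertion --- that $\id\times V$ is an isomorphism of $C_J\times_{\alpha'}\N$ onto $\O(J,M_L)$ --- is correct and follows the same route as the paper: Stacey covariance is immediate from $\alpha'=\Ad V|_{C_J}$, surjectivity comes from the relation $k_M(q(a))=k_A(a)V$ and the spanning set for $\O(J,M_L)$, and injectivity is Proposition~\ref{diut} applied with the gauge action. Your explicit verification that $\overline{\gamma_z}(V)=zV$ is a detail the paper leaves implicit.

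However, your proposal addresses only this final claim and explicitly takes ``the earlier parts of the theorem'' as given. Those earlier parts --- the existence and uniqueness of the isometry $V$, the formula~\eqref{defalpha'} for $\alpha'$, and the assertions that $\alpha'$ is extendible with $\overline{\alpha'}(1)=VV^*$, injective, and has range the full corner $VV^*C_JVV^*$ --- are the bulk of the work in the paper's proof and are not routine. In particular, since $A$ need not be unital, $V$ has to be obtained as a strict limit $V=\lim_\lambda k_M(q(e_\lambda))$ for an approximate identity $(e_\lambda)$, and one must check convergence, that the limit is an isometry (using $\overline L(1)=1$), and that it satisfies $k_M(q(a))=k_A(a)V$. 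The corner-range assertion requires a separate computation showing that $V^*SV\in C_J$ for spanning elements $S$ of $C_J$. As written, your proposal is a correct proof of the ``Finally'' clause but not of the full theorem.
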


\begin{proof}
Since we know from \cite[Corollary~3.5]{BRV} that $k_A:A\to \O(J,M_L)$ is nondegenerate\footnote{We caution that this nondegeneracy is not at all obvious, and even slightly surprising, because the representation $\pi$ in a Toeplitz representation $(\psi,\pi)$ is not required to be nondegenerate.}, there is at most one multiplier $V$ satisfying $k_M(q(a))=k_A(a)V$, and we have uniqueness.

When the $C^*$-algebra $A$ has an identity, we can deduce from the results in \cite[\S3]{BR} that $V:=k_M(q(1))$ has the required properties. When $A$ does not have an identity, we take an approximate identity $\{e_{\lambda}\}$ for $A$, and claim, following Fowler \cite[\S3]{F}, that $\{k_M(q(e_{\lambda}))\}$ converges strictly in $M(\O(J,M_L))$ to a multiplier $V$. Indeed, for $a,b\in A$, $m\in M_L^{\otimes i}$ and $n\in M_L^{\otimes j}$ we have
\begin{align}\label{psialcvgeleft}
k_M(q(e_\lambda))k_M^{\otimes i}(a\cdot m)k_M^{\otimes j}(b\cdot n)^* 
&=k_M(q(e_\lambda))k_A(a)k_M^{\otimes i}(m)k_M^{\otimes j}(b\cdot n)^*\\
&=k_M(q(e_\lambda\alpha(a)))k_M^{\otimes i}(m)k_M^{\otimes j}(b\cdot n)^*\notag\\
&\to k_M(q(\alpha(a)))k_M^{\otimes i}(m)k_M^{\otimes j}(b\cdot n)^*,\notag
\end{align}
and similarly
\begin{equation}\label{psialcvgeright}
k_M^{\otimes i}(a\cdot m)k_M^{\otimes j}(b\cdot n)^*k_M(q(e_\lambda))\to k_M^{\otimes i}(a\cdot m)k_M^{\otimes j}(n)^*k_M(q(b^*)).
\end{equation}
Since $\overline{L}$ is positive and $\overline{L}(1)=1$, we have $\|L\|\leq 1$, and $\|q(e_{\lambda})\|\leq \|e_{\lambda}\|\leq 1$ for all $\lambda$. Thus, since the elements $k_M^{\otimes i}(a\cdot m)k_M^{\otimes j}(b\cdot n)^*$ span a dense subspace of $\O(J,M_L)$, an $\epsilon/3$ argument using \eqref{psialcvgeleft} and~\eqref{psialcvgeright} shows that $\{k_M(q(e_{\lambda}))b\}$ and $\{bk_M(q(e_{\lambda}))\}$ converge in $\O(J,M_L)$ for every $b\in \O(J,M_L)$. Thus $\{k_M(q(e_{\lambda}))\}$ is strictly Cauchy, and since $M(\O(J,M_L)$ is strictly complete we deduce that $\{k_M(q(e_{\lambda}))\}$ converges strictly to a multiplier $V$; \eqref{psialcvgeleft} implies that $V$ satisfies
\begin{equation}\label{propV}
Vk_M^{\otimes i}(a\cdot m)k_M^{\otimes j}(b\cdot n)^*=k_M(q(\alpha(a)))k_M^{\otimes i}(m)k_M^{\otimes j}(b\cdot n)^*.
\end{equation}

To see that $V$ is an isometry, we observe that 
\[
k_M(q(e_{\lambda}))^*k_M(q(e_{\lambda}))=k_A(\langle q(e_{\lambda}),q(e_{\lambda})\rangle)=k_A(L(e_{\lambda}^2));
\]
since $\overline{L}$ is strictly continuous, $L(e_{\lambda}^2)$ converges strictly to $\overline{L}(1_{M(A)})=1_{M(A)}$.  Since $k_A:A\to \O(J,M_L)$ is nondegenerate, $k_M(q(e_{\lambda}))^*k_M(q(e_{\lambda}))$ converges strictly to $1=1_{M(\O(J,M_L))}$, and since the multiplication in a multiplier algebra is jointly strictly continuous on bounded sets (by another $\epsilon/3$ argument), we deduce that $V^*V=1$. Thus $V$ is an isometry. Next, we let $a\in A$ and compute
\begin{equation}
k_M(q(a))=\lim_{\lambda}k_M(q(ae_\lambda))=\lim_{\lambda}k_M(a\cdot q(e_\lambda))=\lim_{\lambda}k_A(a)k_M(q(e_\lambda))=k_A(a)V,
\end{equation}
so $V$ has the required properties. 

Conjugating by the isometry $V\in M(\O(J,M_L))$ gives an endomorphism $\Ad V:T\mapsto VTV^*$, and two applications of \eqref{propV} show that
\begin{align}\label{formforalpha'}
\Ad V\big(k_M^{\otimes i}(a\cdot m)k_M^{\otimes i}(b\cdot n)^*\big)
&=k_M(q(\alpha(a)))k_M^{\otimes i}(m)k_M^{\otimes i}(n)^*k_M(q(\alpha(b)))^*\\
&=k_M^{\otimes(i+1)}(q(\alpha(a))\otimes_A m)k_M^{\otimes (i+1)}(q(\alpha(b))\otimes_An)^*.\notag
\end{align}
The formula \eqref{formforalpha'} implies that $\Ad V$ maps the core $C_J$ into itself, and that the restriction $\alpha':=(\Ad V)|_{C_J}$ satisfies \eqref{defalpha'}. The pair $(\id,V)$ is then by definition Stacey covariant for $\alpha'\in \End C_J$, and we can apply the dual-invariant uniqueness theorem (Proposition~\ref{diut}) to the gauge action $\gamma$ on $\O(J,M_L)$, finding that $\id\times V$ is a faithful representation of $C_J\times_{\alpha'}\N$ in $\O(J,M_L)$. The identity $k_M(q(a))=k_A(a)V$ implies that $k_M(M_L)=\overline{k_A(A)V}$, and since $\O(J,M_L)$ is generated by $k_A(A)\subset C_J$ and $k_M(M_L)$, the range of $\id\times V$ contains the generating set $k_A(A)\cup k_A(A)V$, and hence is all of $\O(J,M_L)$.

It remains for us to prove the assertions about $\alpha'$.  It is injective because $\Ad V$ is. Since  $k_A$ is nondegenerate the image $\{k_A(e_\lambda)\}$ of an approximate identity $\{e_\lambda\}$ converges strictly to $1$ in $M(\O(J,M_L))$. Hence $\{\alpha'(k_A(e_\lambda))\}=\{Vk_A(e_\lambda) V^*\}$ converges strictly to the projection $VV^*$. Thus $\alpha'$ is extendible with $\overline{\alpha'}(1)=VV^*$ by, for example, \cite[Proposition~3.1.1]{adji}. 

The range of $\alpha'$ is certainly contained in the corner $\overline{\alpha'}(1)C_J \overline{\alpha'}(1)$. To see the reverse inclusion, fix $T\in \overline{\alpha'}(1)C_J\overline{\alpha'}(1)$. Then $T=VV^*SVV^*=\Ad V(V^*SV)$ for some $S\in C_J$. 
Since $C_J$ is $\alpha'$-invariant, to see that $T$ is in the range of $\alpha'=\Ad V|_{C_J}$  it suffices to see that $V^*SV$ is in $C_J$, and, by continuity of $\Ad V^*$, it suffices to see this for $S$ of the form
\[
S=k_M^{\otimes i}((a\cdot m_1)\otimes_Am')k_M^{\otimes i}((b\cdot n_1)\otimes_A n')^*
\]
where $a, b\in A$, $m_1, n_1\in M_L$ and $m',n'\in M_L^{\otimes (i-1)}$. For $i\geq 1$ the calculation
\begin{align*}
V^*k_M^{\otimes i}((a\cdot m_1)\otimes_Am')
&=(k_A(a^*)V)^*k_M(m_1)k_M^{\otimes (i-1)}(m')\\
&=k_M(q(a^*))^*k_M(m_1)k_M^{\otimes (i-1)}(m')\\
&=k_A(\langle q(a^*),m_1\rangle)k_M^{\otimes (i-1)}(m')\\
&=k_M^{\otimes (i-1)}(\langle q(a^*),m_1\rangle\cdot m')
\end{align*}
gives 
\[
V^*SV=k_M^{\otimes (i-1)}(\langle q(a^*),m_1\rangle\cdot m')k_M^{\otimes (i-1)}(\langle q(b^*),n_1\rangle\cdot n')^*\in C_J.
\]
For $i=0$  we have 
\[V^*SV=V^*k_A(a)k_A(b)^*V=k_M(q(a^*))^*k_M(q(b^*))=k_A(\langle q(a^*)\,,\, q(b^*)\rangle)\in C_J.
\]
Thus $T=\Ad V(V^*SV)=\alpha'(V^*SV)$, and $\alpha'$ has range $\overline{\alpha'}(1)C_J \overline{\alpha'}(1)$. 
\end{proof}

\subsection{Exel crossed products}
We will now use the isomorphism $\theta$ of $\O(K_\alpha,M_L)$ onto $A\rtimes_{\alpha,L}\N$ to transfer the conclusions of Theorem~\ref{relCPalg=St} over to $A\rtimes_{\alpha,L}\N$.

If $\{e_\lambda\}$ is an approximate identity for $A$, then
\[
\theta\circ k_M(q(e_\lambda))=\QQ(\psi_s(q(e_\lambda)))=\QQ(i(e_\lambda)s)=\QQ(i(e_{\lambda}))\overline{\QQ}(s)
\]
converges by nondegeneracy of $i$ to $\overline{\QQ}(s)$, and hence $\overline{\theta}$ carries the isometry $V$ of Theorem~\ref{relCPalg=St} into $t:=\overline{\QQ}(s)$.  
The isomorphism $\theta$ is equivariant for the gauge action $\gamma$ on $\O(K_\alpha,M_L)$ and the dual action $\hat\alpha$ on $A\rtimes_{\alpha, L}\N$, and hence maps the core $C_{K_\alpha}=\O(K_\alpha,M_L)^\gamma$ onto $(A\rtimes_{\alpha,L}\N)^{\hat\alpha}$. 

Next, we want a workable description of $(A\rtimes_{\alpha,L}\N)^{\hat\alpha}$. 
For $m=q(a_1)\otimes_A\cdots\otimes_Aq(a_i)\in M_L^{\otimes i}$ we have
\begin{align}\label{simplifyinEcp}
k_M^{\otimes i}(m)&=k_M(q(a_1))\cdots\ k_M(q(a_i))=k_A(a_1)V\cdots k_A(a_i)V\\
&=k_A(a_1)k_A(\alpha(a_2))V^2k_A(a_3)V\cdots k_A(a_i)V\notag\\
&=k_A\big(a_1\alpha(a_2)\alpha^{2}(a_3)\cdots \alpha^{i-1}(a_i)\big)V^i,\notag
\end{align}
and hence $\theta$ takes $k_M^{\otimes i}(m)$ into an element of the form $j(a)t^i$. Now
\[
\lsp\big\{j(a)t^it^{*k}j(b):a,b\in A,\;i,k\in\N\big\}
\]
is a $*$-subalgebra of $A\rtimes_{\alpha,L}\N$ containing the generating set $j(A)\cup j(A)t$, and hence is dense in $A\rtimes_{\alpha,L}\N$. The expectation onto $(A\rtimes_{\alpha,L}\N)^{\hat\alpha}$ is continuous and kills terms with $i\not= k$, so 
\begin{equation}\label{presentfpas}
(A\rtimes_{\alpha,L}\N)^{\hat\alpha}=\clsp\big\{j(a)t^it^{*i}j(b):a,b\in A,\;i\in\N\big\}.
\end{equation}

The next corollary says that every Exel crossed product is a Stacey crossed product.

\begin{cor}\label{Ex=St2}
Suppose that $(A,\alpha,L)$ is an Exel system. Then there is an injective endomorphism $\beta$ of $(A\rtimes_{\alpha,L}\N)^{\hat\alpha}$ such that
\begin{equation}\label{formforbeta}
\beta(j(a)t^it^{*i}j(b))=j(\alpha(a))t^{i+1}t^{*(i+1)}j(\alpha(b)).
\end{equation}
The endomorphism $\beta$ is extendible with $\overline\beta(1)=tt^*$ and has range $tt^*(A\rtimes_{\alpha,L}\N)^{\hat\alpha}tt^*$. The pair $(\id,t)$ is a Stacey-covariant representation of $\big((A\rtimes_{\alpha,L}\N)^{\hat\alpha},\beta\big)$, and $\id\times t$ is an isomorphism of the Stacey crossed product $(A\rtimes_{\alpha,L}\N)^{\hat\alpha}\times_\beta\N$ onto the Exel crossed product $A\rtimes_{\alpha,L}\N$.
\end{cor}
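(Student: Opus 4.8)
The plan is to obtain Corollary~\ref{Ex=St2} by specialising Theorem~\ref{relCPalg=St} to the ideal $J=K_\alpha$ and transporting its conclusions along the isomorphism $\theta\colon\O(K_\alpha,M_L)\to A\rtimes_{\alpha,L}\N$ of \cite[Theorem~4.1]{BRV}. I would use the three facts assembled just before the corollary: that $\overline\theta$ carries the isometry $V$ of Theorem~\ref{relCPalg=St} to $t=\overline\QQ(s)$; that $\theta$ is equivariant for the gauge action $\gamma$ and the dual action $\hat\alpha$, hence restricts to an isomorphism of the core $C_{K_\alpha}=\O(K_\alpha,M_L)^\gamma$ onto $(A\rtimes_{\alpha,L}\N)^{\hat\alpha}$; and the description \eqref{presentfpas}. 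I would then define $\beta:=\theta\circ\alpha'\circ\theta^{-1}$, an endomorphism of $(A\rtimes_{\alpha,L}\N)^{\hat\alpha}$, and note that since $\overline\theta(V)=t$ and $\alpha'=\Ad V|_{C_{K_\alpha}}$, a one-line computation gives $\beta=\Ad t$ restricted to the fixed-point algebra, i.e. $\beta(x)=txt^*$.

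The first substantive step is to verify \eqref{formforbeta}. By \eqref{simplifyinEcp} together with $\overline\theta(V)=t$, the spanning elements $k_M^{\otimes i}(m)k_M^{\otimes i}(n)^*$ of $C_{K_\alpha}$ are carried by $\theta$ to elements of the form $j(a)t^it^{*i}j(b)$, so it suffices to compute $\beta$ on these. I would expand $\beta(j(a)t^it^{*i}j(b))=t\,j(a)t^it^{*i}j(b)\,t^*$ and push the outer $t$ and $t^*$ inward using the covariance relations of the Exel crossed product: $t\,j(a)=j(\alpha(a))t$ yields $t\,j(a)t^i=j(\alpha(a))t^{i+1}$, while taking adjoints in $t\,j(b^*)=j(\alpha(b)^*)t$ yields $j(b)t^*=t^*j(\alpha(b))$, so $t^{*i}j(b)t^*=t^{*(i+1)}j(\alpha(b))$. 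Combining these gives $j(\alpha(a))t^{i+1}t^{*(i+1)}j(\alpha(b))$, which is exactly \eqref{formforbeta} and is also the image under $\theta$ of the formula \eqref{defalpha'} for $\alpha'$, giving a consistency check.

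The remaining assertions transport directly along $\theta$. Injectivity of $\beta$ follows from that of $\alpha'$; extendibility with $\overline\beta(1)=\overline\theta(\overline{\alpha'}(1))=\overline\theta(VV^*)=tt^*$ and range $\theta\big(\overline{\alpha'}(1)C_{K_\alpha}\overline{\alpha'}(1)\big)=tt^*(A\rtimes_{\alpha,L}\N)^{\hat\alpha}tt^*$ follow because $\overline\theta$ is an isomorphism of multiplier algebras. That $(\id,t)$ is Stacey-covariant is then immediate, since $t$ is an isometry (being $\overline\theta(V)$) and $\beta=\Ad t$. For the final isomorphism I see two routes: either observe that $\theta$ intertwines $\alpha'$ and $\beta$ and invoke functoriality of the Stacey construction to transport the isomorphism $\id\times V$ of Theorem~\ref{relCPalg=St}; or, more self-containedly, apply the dual-invariant uniqueness theorem (Proposition~\ref{diut}) to $(\id,t)$, using that the inclusion $\id$ of the core into $A\rtimes_{\alpha,L}\N$ is faithful and that the dual action $\hat\alpha$ fixes $j(A)$ and scales $t$. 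Proposition~\ref{diut} then gives faithfulness of $\id\times t$, and surjectivity follows because its range is a $C^*$-algebra containing $j(A)$ and $j(A)t$, which generate $A\rtimes_{\alpha,L}\N$.

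Since virtually everything rests on Theorem~\ref{relCPalg=St}, the only genuine computation is the verification of \eqref{formforbeta}, and I expect the one delicate point there to be deriving and correctly applying the relation $j(b)t^*=t^*j(\alpha(b))$ when commuting $t^*$ past $j(b)$; once that covariance identity is in hand, the rest is bookkeeping transport along the isomorphism $\theta$.
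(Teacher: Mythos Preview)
Your proposal is correct and follows the paper's strategy exactly: define $\beta=\theta\circ\alpha'\circ\theta^{-1}$, transport all the structural properties of $\alpha'$ from Theorem~\ref{relCPalg=St} along $\theta$, and then verify the formula~\eqref{formforbeta} on spanning elements. The one minor difference is in that last verification: the paper pulls \eqref{formforbeta} back through $\theta$ and \eqref{simplifyinEcp} to the formula~\eqref{defalpha'} for $\alpha'$, whereas you compute $\beta=\Ad t$ directly in the Exel crossed product using the covariance relation $tj(a)=j(\alpha(a))t$ and its adjoint $j(b)t^*=t^*j(\alpha(b))$---this is arguably a touch cleaner and avoids unwinding the tensor notation, but it is the same computation seen from the other side of $\theta$.
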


\begin{proof}
Applying the isomorphism $\theta:\O(K_\alpha, M_L)\to A\rtimes_{\alpha, L}\N$ to the conclusion of Theorem~\ref{relCPalg=St} gives an endomorphism $\beta:=\theta\circ\alpha'\circ\theta^{-1}$ of $(A\rtimes_{\alpha,L}\N)^{\hat\alpha}$ and an isomorphism $\id\times t$ of $(A\rtimes_{\alpha,L}\N)^{\hat\alpha}\times_\beta\N$ onto $A\rtimes_{\alpha,L}\N$. It remains for us to check the formula for $\beta$. Let $m=q(a_1)\otimes_A\cdots\otimes_Aq(a_i)$. Then the calculation \eqref{simplifyinEcp} shows that $\theta$ carries $k_M^{\otimes i}(c\cdot m)$ into $j(ca_1\alpha(a_2)\cdots\alpha^{i-1}(a_i))t^i$, and $k_M^{\otimes(i+1)}(q(\alpha(c))\otimes_Am)$ into 
\[
j\big(\alpha(c)\alpha(a_1)\alpha^2(a_2)\cdots\alpha^i(a_i)\big)t^{i+1}=j\big(\alpha(ca_1\alpha(a_2)\cdots\alpha^{i-1}(a_i))\big)t^{i+1},
\]
so \eqref{formforbeta} follows from \eqref{defalpha'}.
\end{proof}

\begin{remark}\label{idcpCP}
Since we have identified how the action $\alpha'$ on the core of  $\O(K_\alpha, M_L)$ pulls over to the Exel crossed product $A\rtimes_{\alpha, L}\N$, we will from now on freely identify $\O(K_\alpha, M_L)$ and $A\rtimes_{\alpha, L}\N$, and drop the isomorphism $\theta$ from our notation.
\end{remark}

\begin{example}
We now discuss a family of Exel systems studied in \cite{EaHR} and \cite{LRR}. Let $d\in \N$ and fix $B\in M_d(\Z)$ with nonzero determinant $N$. (This matrix $B$ plays the same role as the matrix $B$ in \cite{EaHR,LRR}; because $A$ is already heavily subscribed in this paper, we write $B^t$ in place of the matrix $A$ used there.) We choose a set $\Sigma$ of coset representatives for $\Z^d/B\Z^d$.

The map $\sigma_{B^t}:\T^d\to \T^d$ characterised by $\sigma_{B^t}(e^{2\pi ix})=e^{2\pi iB^tx}$ is an $N$-sheeted covering map, and induces an endomorphism $\alpha_{B^t}:f\mapsto f\circ\sigma_{B^t}$ of $C(\T^d)$, and $L(f)(z):=N^{-1}\sum_{\sigma_{B^t}(w)=z}f(w)$ defines a transfer operator $L$ for $\alpha_{B^t}$. Proposition~3.3 of \cite{LRR} says that the Exel crossed product $C(\T^d)\rtimes_{\alpha_{B^t},L}\N$ is the universal $C^*$-algebra generated by a unitary representation $u$ of $\Z^d$ and an isometry $v$ satisfying
\begin{itemize}
\item[(E1)] $vu_m=u_{Bm}v$ for $m\in\Z^d$, 
\smallskip
\item[(E2)] $v^*u_mv=\begin{cases} u_{B^{-1}m}&\text{if $m\in B\Z^d$}\\
0&\text{otherwise, and}\end{cases}$
\smallskip
\item[(E3)] $1=\sum_{m\in\Sigma} (u_mv)(u_mv)^*$;
\end{itemize}
we then have
\[
C(\T^d)\rtimes_{\alpha_{B^t},L}\N=\clsp\{u_mv^kv^{*l}u_n^*:k,l\in \N\text{ and }m,n\in \Z^d\}.
\]
When we view $C(\T^d)\rtimes_{\alpha_{B^t},L}\N$ as a Cuntz-Pimsner algebra $(\O(M_L),j_{M_L},j_{C(\T^d)})$ as in Remark~\ref{idcpCP}, $v=j_{M_L}(q(1))=t$ and $u$ is the representation $m\mapsto j_{C(\T^d)}(\chi_m)$, where $\chi_m(z):=z^m$. Since $\alpha_{B^t}(\chi_m)=\chi_{Bm}$, Corollary~\ref{Ex=St2} gives an endomorphism $\beta$ of
\[
\big(C(\T^d)\rtimes_{\alpha_{B^t},L}\N\big)^{\hat\alpha_{B^t}}=\clsp\{u_mv^iv^{*i}u_n^*:i\in \N\text{ and }m,n\in \Z^d\}
\]
satisfying
\begin{equation}\label{defbeta4}
\beta(u_mv^iv^{*i}u_n^*)=u_{Bm}v^{i+1}v^{*(i+1)}u_{Bn}^*.
\end{equation}

Now that we have the formula for $\beta$, we can prove directly that there is such an endomorphism. To see this, recall from \cite[Proposition~5.5(b)]{LRR} that for each $i$ and 
\[
\Sigma_i:=\{\mu_1+B\mu_2+\cdots B^{i-1}\mu_i:\mu\in\Sigma^i\},
\]
$\{u_mv^iv^{*i}u_n^*:m,n\in\Sigma_i\}$ is a set of nonzero matrix units. Thus there is a homomorphism $\zeta_i:M_{\Sigma_i}(\C)\to M_{\Sigma_{i+1}}(\C)$ such that $\zeta_i(u_mv^iv^{*i}u_n^*)=u_{Bm}v^{i+1}v^{*(i+1)}u_{Bn}^*$. Let $\delta$ denote the universal representation of $B^{i+1}\Z^d$ in $C^*(B^{i+1}\Z^d)$. Then the unitary representation $B^im\mapsto \delta_{B^{i+1}m}$ induces a homomorphism $\eta_i:C^*(B^i\Z^d)\to C^*(B^{i+1}\Z^d)$. When we identify $C_i:=\clsp\{u_mv^iv^{*i}u_n^*\}$ with $M_{\Sigma_i}(\C)\otimes C^*(B^i\Z^d)$ as in \cite[Proposition~5.5(c)]{LRR}, we get homomorphisms 
\[
\beta_i:=\zeta_i\otimes \eta_i:C_i=M_{\Sigma_i}(\C)\otimes C^*(B^i\Z^d)\to C_{i+1}=M_{\Sigma_{i+1}}(\C)\otimes C^*(B^{i+1}\Z^d) 
\]
satisfying \eqref{defbeta4}. The Cuntz relation (E3) implies that $\beta_{i+1}|_{C_i}=\beta_i$, and hence the $\beta_i$ combine to give a homomorphism $\beta:\bigcup_{i=1}^\infty C_i\to \bigcup_{i=1}^\infty C_i$ satisfying \eqref{defbeta4}; since the homomorphisms $\beta_i$ are norm-decreasing, $\beta$ extends to an endomorphism of $\O(M_L)^\gamma=\overline{\bigcup_{i=1}^\infty C_i}$.
\end{example}

\subsection{Cuntz-Pimsner algebras}\label{cp-algs}  Let $(A, \alpha, L)$ be an Exel system, and let $I$ be an ideal in $A$. Following \cite[Definition~4.1]{BR}, we say that $L$  is \emph{almost faithful on $I$} if 
\[a\in I\text{\ and\ }L(b^*a^*ab)=0\text{\ for all $b\in A$ imply }a=0. 
\]
Since 
\begin{equation}\label{almostfaithful}
L(b^*a^*ab)=\langle q(ab)\,,\, q(ab)\rangle=\langle\phi(a)(q(b))\, ,\,\phi(a)(q(b))\rangle,
\end{equation}
$L$ is almost faithful on $I$ if and only if $\phi|_I:I\to \L(M_L)$ is injective. Almost faithful transfer operators were introduced in \cite{BR} as a necessary and sufficient condition for the the canonical map $k_A:A\to \O(M_L)$ to be injective (see also \cite[Proposition~2.1]{MS} and \cite[Theorem~4.3]{BRV}).

In this subsection, we suppose that $L$ is almost faithful on $\phi^{-1}(\K(M_L))$. Then, since $k_A:A\to \O(M_L)$ is injective, we can use \cite[Corollary~4.9]{FMR} to realise the core $\O(M_L)^\gamma$ as a direct limit, and obtain a different description of the Stacey system $(\O(M_L)^\gamma,\alpha')$. 

We denote the identity operator on $M_L^{\otimes i}$ by $1_i$, and write $\K(M_L^{\otimes j})\otimes_A 1_{i-j}$ for the image of $\K(M_L^{\otimes j})$ under the map $T\mapsto T\otimes_A 1_{i-j}$. Then, following \cite[\S4]{FMR}, we define
\[
C_i=(A\otimes_A1_i)+(\K(M_L)\otimes_A1_{i-1})+\cdots+\K(M_L^{\otimes i}),
\]
which is a $C^*$-subalgebra of $\L(M_L^{\otimes i})$. We define $\phi_i:C_i\to C_{i+1}$ by $\phi_i(T)=T\otimes_A 1_1$, and define $(C_\infty,\iota^i):=\varinjlim (C_i,\phi_i)$. Since $k_A$ is injective, we can now  apply \cite[Corollary~4.9]{FMR} to the Cuntz-Pimsner covariant representation $(k_M,k_A)$, and deduce that there is an isomorphism $\kappa$ of $C_\infty$ onto the core $\O(M_L)^\gamma$ such that
\begin{equation}\label{defkappa}
\kappa(\iota^i(T\otimes_A1_{i-j}))=(k_M^{\otimes j},k_A)^{(1)}(T)\ \text{ for $T\in \K(M_L^{\otimes j})$ and $i\geq j$}
\end{equation}
(the notation in \cite{FMR} suppresses the maps $\iota^i$ ).

To describe the endomorphism $\beta:=\kappa^{-1}\circ\alpha'\circ\kappa$ of $C_\infty$, we need some notation.

\begin{lemma}\label{defU}
The map $U:A\to M_L$ defined by $U(a)=q(\alpha(a))$ is an adjointable isometry such that $U^*(q(a))=L(a)$ for $a\in A$.
\end{lemma}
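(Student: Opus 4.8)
The plan is to regard $A$ as the standard right-Hilbert $A$-module, with inner product $\langle a,b\rangle=a^*b$ and right action given by multiplication, and then to produce the adjoint of $U$ explicitly. First I would record the easy structural facts. The map $U$ is linear, and it is a right-module map, since
\[
U(ab)=q(\alpha(ab))=q(\alpha(a)\alpha(b))=q(\alpha(a))\cdot b=U(a)\cdot b,
\]
using that the right action on $M_L$ is $q(c)\cdot b=q(c\alpha(b))$. Next I would check that $U$ preserves inner products: because $\langle q(x),q(y)\rangle=L(x^*y)$ we have $\langle U(a),U(b)\rangle=L(\alpha(a^*b))$, and applying the transfer-operator identity \eqref{transferoponmofa} with $m=1_{M(A)}$ together with $\overline{L}(1_{M(A)})=1_{M(A)}$ gives $L(\alpha(a^*b))=a^*b=\langle a,b\rangle$. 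In particular $\|U(a)\|=\|a\|$, so $U$ is bounded.

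The substantive point is adjointability, which I would approach by writing down a candidate adjoint on the dense sub-bimodule $q(A)$ and then proving it is bounded. A direct computation using \eqref{transferoponmofa} gives, for $a,b\in A$,
\[
\langle U(b),q(a)\rangle=L(\alpha(b^*)a)=b^*\overline{L}(a)=b^*L(a),
\]
which is exactly $\langle b,L(a)\rangle$ in the module $A$. This identifies the prospective adjoint on $q(A)$ as $W(q(a)):=L(a)$, and it simultaneously points to the estimate that makes $W$ legitimate.

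The main obstacle is that $W$ is so far only defined on $q(A)$, so to extend it to all of $M_L$ I need both well-definedness and boundedness; both follow from a single estimate. By the Cauchy--Schwarz inequality in $M_L$,
\[
\|b^*L(a)\|=\|\langle U(b),q(a)\rangle\|\leq\|U(b)\|\,\|q(a)\|=\|b\|\,\|q(a)\|,
\]
and taking the supremum over $\|b\|\leq 1$ yields $\|L(a)\|\leq\|q(a)\|$. This inequality shows at once that $q(a)\mapsto L(a)$ is well defined (if $q(a)=q(a')$ then $\|L(a-a')\|\leq\|q(a-a')\|=0$) and contractive on $q(A)$, so it extends to a bounded map $W:M_L\to A$. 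By continuity and density the relation $\langle U(b),\eta\rangle=\langle b,W\eta\rangle$ then holds for all $\eta\in M_L$, so $U$ is adjointable with $U^*=W$; in particular $U^*(q(a))=L(a)$.

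Finally, combining inner-product preservation with adjointability gives $\langle U^*U(a),b\rangle=\langle U(a),U(b)\rangle=\langle a,b\rangle$ for all $a,b\in A$, whence $U^*U=\id$ and $U$ is an isometry (equivalently $U^*U(a)=L(\alpha(a))=a$, again by \eqref{transferoponmofa}). The only step demanding genuine care is the boundedness of the candidate adjoint via the estimate $\|L(a)\|\leq\|q(a)\|$; the remaining computations are routine applications of the transfer-operator identity and the normalization $\overline{L}(1)=1$.
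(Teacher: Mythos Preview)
Your proof is correct and follows essentially the same route as the paper: verify that $U$ preserves inner products via the transfer-operator identity, compute $\langle U(b),q(a)\rangle=b^*L(a)$, and use this together with Cauchy--Schwarz to obtain the key estimate $\|L(a)\|\le\|q(a)\|$, which makes $q(a)\mapsto L(a)$ a well-defined bounded map serving as the adjoint. The only differences are cosmetic---you add the module-map check and phrase well-definedness explicitly---but the substance is identical.
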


\begin{proof}
The calculation
\begin{align*}
\langle U(a)\,,\,U(b)\rangle&=\langle q(\alpha(a))\,,\,q(\alpha(b))\rangle=L(\alpha(a)^*\alpha(b))\\
&=L(\alpha(a^*b))=a^*b=\langle a\,,\,b\rangle
\end{align*}
shows that  $U$ is inner-product preserving. We next note that 
\begin{equation}\label{compU*}
\langle U(a)\,,\,q(b)\rangle=L(\alpha(a)^*b)=a^*L(b)=\langle a\,,\,L(b)\rangle.
\end{equation}
Equation~\eqref{compU*} implies that 
\[
\|a^*L(b)\|=\|\langle U(a)\,,\,q(b)\rangle\|\leq \|U(a)\|\,\|q(b)\|=\|a\|\,\|q(b)\|;
\]
thus $\|L(b)\|\leq\|q(b)\|$, and there is a well-defined bounded linear map $T:M_L\to A$ such that $T(q(b))=L(b)$. Now \eqref{compU*}
shows that $U$ is adjointable with adjoint $T$.
\end{proof}

\begin{cor}\label{defUi}
Define maps $U_i:M_{L}^{\otimes i}\to M_L^{\otimes(i+1)}$ by identifying $M_{L}^{\otimes i}$ with $A\otimes_A M_L^{\otimes i}$ and taking
\[
U_i:=U\otimes_A 1_i:M^{\otimes i}=A\otimes_A M_L^{\otimes i}\to M_L\otimes_A M_L^{\otimes i}=M_L^{\otimes(i+1)}.
\]
Then each $U_i$ is an adjointable isometry, and
\begin{enumerate}
\setlength\itemindent{-16pt} 
\item $U_i(a\cdot m)=q(\alpha(a))\otimes_A m$;
\item $U_i^*(q(a)\otimes_Am)=L(a)\cdot m$;
\item $U_{i+1}=U_i\otimes_A1$ and $U_{i+1}^*=U_i^*\otimes_A 1$.
\end{enumerate}
\end{cor}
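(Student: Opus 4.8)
The plan is to obtain the whole corollary as a formal consequence of the functoriality of the internal tensor product with respect to adjointable operators, applied to the adjointable isometry $U:A\to M_L$ of Lemma~\ref{defU}. The general fact I would invoke is standard (e.g.\ Lance's book on Hilbert modules): if $U\in\L(X,Y)$ is an adjointable operator between right-Hilbert $A$--modules and $Z$ is a correspondence over $A$, then the prescription $(U\otimes_A1_Z)(x\otimes_Az)=U(x)\otimes_Az$ extends to an adjointable operator in $\L(X\otimes_AZ,\,Y\otimes_AZ)$ with adjoint $U^*\otimes_A1_Z$ and norm at most $\|U\|$. On elementary tensors this is checked by the computation $\langle U(x)\otimes_Az\,,\,y'\otimes_Az'\rangle=\langle z\,,\,\langle U(x),y'\rangle\cdot z'\rangle=\langle z\,,\,\langle x,U^*y'\rangle\cdot z'\rangle=\langle x\otimes_Az\,,\,U^*(y')\otimes_Az'\rangle$, where $\cdot$ is the left action on $Z$. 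Taking $X=A$, $Y=M_L$ and $Z=M_L^{\otimes i}$ shows that $U_i=U\otimes_A1_i$ is a well-defined adjointable operator with adjoint $U_i^*=U^*\otimes_A1_i$.

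Next I would record that $U_i$ is isometric. Since $U$ is inner-product preserving, $U^*U=1_A$, and hence $U_i^*U_i=(U^*\otimes_A1_i)(U\otimes_A1_i)=(U^*U)\otimes_A1_i=1_A\otimes_A1_i$; under the canonical isometric isomorphism $A\otimes_AM_L^{\otimes i}\cong M_L^{\otimes i}$ this is exactly the identity $1_i$, so $U_i^*U_i=1_i$ and $U_i$ is an isometry.

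For formulas (a) and (b) I would unwind the identification $M_L^{\otimes i}\cong A\otimes_AM_L^{\otimes i}$, under which $a\cdot m\leftrightarrow a\otimes_Am$. Then (a) is immediate: $U_i(a\cdot m)=(U\otimes_A1_i)(a\otimes_Am)=U(a)\otimes_Am=q(\alpha(a))\otimes_Am$, using $U(a)=q(\alpha(a))$. For (b), applying $U_i^*=U^*\otimes_A1_i$ to $q(a)\otimes_Am\in M_L\otimes_AM_L^{\otimes i}$ gives $U^*(q(a))\otimes_Am=L(a)\otimes_Am$ by Lemma~\ref{defU}, which is $L(a)\cdot m$ under the same identification. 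Finally, (c) follows from associativity of the internal tensor product together with $1_i\otimes_A1_1=1_{i+1}$: one has $U_{i+1}=U\otimes_A1_{i+1}=U\otimes_A(1_i\otimes_A1_1)=(U\otimes_A1_i)\otimes_A1=U_i\otimes_A1$, and likewise $U_{i+1}^*=U_i^*\otimes_A1$.

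I do not expect a serious obstacle here, as the statement is essentially a bookkeeping exercise over Lemma~\ref{defU}. The only points requiring care are keeping the identification $A\otimes_AM_L^{\otimes i}\cong M_L^{\otimes i}$ straight (so that the left actions $a\cdot m$ and $L(a)\cdot m$ appear correctly in (a) and (b)), and the well-definedness of $U\otimes_A1_i$ on the balanced tensor product; the latter rests on $U$ being a right $A$--module map, which is automatic for adjointable operators.
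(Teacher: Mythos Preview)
Your proposal is correct. The paper gives no proof of this corollary at all, evidently treating it as an immediate consequence of Lemma~\ref{defU} via the standard functoriality of the interior tensor product; your write-up supplies exactly those routine details, so the approach is the same as the one the paper implicitly relies on.
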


With the notation of Corollary~\ref{defUi}, we can now describe  the endomorphism on $C_\infty$.  We reconcile  our results  with Exel's \cite[Theorem~6.5]{E2} in \S\ref{connectExel} below, and show there that Corollary~\ref{CP=St} reduces to  \cite[Theorem~6.5]{E2} when $A$ is unital.

\begin{cor}\label{CP=St}
Suppose that $(A,\alpha,L)$ is an Exel system such that  $L$ is almost faithful on $\phi^{-1}(\K(M_L))$. There is an endomorphism $\beta$ of $C_\infty:=\varinjlim(C_i,\phi_i)$ such that 
\begin{equation}
\beta(\iota^i(T))=\iota^{i+1}(U_iTU_i^*)\ \text{ for $T\in C_i$,}\label{defbeta2}
\end{equation}
and $\beta$ is extendible and injective with range $\overline{\beta}(1)C_\infty\overline{\beta}(1)$. Let $\kappa$ be the isomorphism of $C_\infty$ onto $\O(M_L)^\gamma$ satisfying \eqref{defkappa}, and let $V$ be the isometry in $M(\O(M_L))$ such that $k_M(q(a))=k_A(a)V$ for $a\in A$ (as given by Theorem~\ref{relCPalg=St}). Then $(\kappa,V)$ is a Stacey-covariant representation of $(C_\infty,\beta)$ in $\O(M_L)$, and $\kappa\times V$ is an isomorphism of the Stacey crossed product $C_\infty\times_\beta\N$ onto $\O(M_L)$. If $A$ acts by compact operators on $M_L$, then $\overline{\beta}(1)$ is a full projection.
\end{cor}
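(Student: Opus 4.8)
The plan is to transport the conclusion of Theorem~\ref{relCPalg=St}, applied with $J=\phi^{-1}(\K(M_L))$ so that its core $C_J$ is exactly $\O(M_L)^\gamma$, through the isomorphism $\kappa$, and then to treat the fullness assertion separately as the only genuinely new point. First I would set $\beta:=\kappa^{-1}\circ\alpha'\circ\kappa$; this is at once an endomorphism of $C_\infty$, and it inherits the structural properties of $\alpha'$ by transport. It is injective because $\alpha'$ is; since $\kappa$ extends to an isomorphism $\overline\kappa$ of multiplier algebras, $\beta$ is extendible with $\overline\beta=\overline\kappa^{-1}\circ\overline{\alpha'}\circ\overline\kappa$, so that $\overline\beta(1)=\overline\kappa^{-1}(VV^*)$; and the range computation $\beta(C_\infty)=\kappa^{-1}\big(\alpha'(C_J)\big)=\kappa^{-1}\big(VV^*C_J VV^*\big)=\overline\beta(1)\,C_\infty\,\overline\beta(1)$ is then formal. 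For the Stacey assertions, $\kappa$ is a faithful nondegenerate homomorphism of $C_\infty$ into $\O(M_L)$ (nondegenerate because its range $\O(M_L)^\gamma$ contains the nondegenerate $k_A(A)$), and $\kappa(\beta(c))=\alpha'(\kappa(c))=V\kappa(c)V^*$, so $(\kappa,V)$ is Stacey covariant. Since $\gamma_z\circ\kappa=\kappa$ (the range of $\kappa$ lies in the fixed-point algebra) and $\gamma_z(V)=zV$ exactly as in Theorem~\ref{relCPalg=St}, Proposition~\ref{diut} shows $\kappa\times V$ is faithful; it is surjective because its range contains $k_A(A)$ and $k_M(M_L)=\overline{k_A(A)V}$, which generate $\O(M_L)$.

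Next I would verify the explicit formula \eqref{defbeta2}; it suffices to check it on the spanning elements $\iota^i(\Theta_{m,n}\otimes_A 1_{i-j})$ with $m,n\in M_L^{\otimes j}$ and $0\le j\le i$. Iterating the relation $U_{i+1}=U_i\otimes_A 1$ of Corollary~\ref{defUi} gives $U_i=U_j\otimes_A 1_{i-j}$, and since $U_j$ is an adjointable right-module map the standard identity $U_j\Theta_{m,n}U_j^*=\Theta_{U_jm,\,U_jn}$ yields
\[
U_i(\Theta_{m,n}\otimes_A 1_{i-j})U_i^*=(U_j\Theta_{m,n}U_j^*)\otimes_A 1_{i-j}=\Theta_{U_jm,\,U_jn}\otimes_A 1_{i-j}\in C_{i+1}.
\]
Applying $\kappa$ to the two sides of \eqref{defbeta2} via \eqref{defkappa} turns them into
\[
\alpha'\big(k_M^{\otimes j}(m)k_M^{\otimes j}(n)^*\big)\qquad\text{and}\qquad k_M^{\otimes(j+1)}(U_jm)\,k_M^{\otimes(j+1)}(U_jn)^*,
\]
and writing $m=a\cdot m'$, $n=b\cdot n'$ and using $U_j(a\cdot m')=q(\alpha(a))\otimes_A m'$ from Corollary~\ref{defUi}, the equality of these two is precisely \eqref{formforalpha'}. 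Hence, $\kappa$ being injective, $\beta$ satisfies \eqref{defbeta2} on each $C_i$.

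The substantive step, which I expect to be the main obstacle, is the fullness of $\overline\beta(1)$ when $\phi(A)\subseteq\K(M_L)$. As $\kappa$ carries $\overline\beta(1)$ to $VV^*$ and preserves fullness, it suffices to prove that $\overline{\O(M_L)^\gamma\,VV^*\,\O(M_L)^\gamma}=\O(M_L)^\gamma$. The hypothesis forces $J=\phi^{-1}(\K(M_L))=A$, so Cuntz-Pimsner covariance gives $k_A(a)=(k_M,k_A)^{(1)}(\phi(a))$ for every $a\in A$. I would approximate the compact operator $\phi(a)$ by finite sums of rank-one operators $\Theta_{q(c),q(c')}$ with $c,c'\in A$; since $(k_M,k_A)^{(1)}(\Theta_{q(c),q(c')})=k_M(q(c))k_M(q(c'))^*=k_A(c)\,VV^*\,k_A(c')^*$, passing to the limit shows $k_A(A)\subseteq\overline{\O(M_L)^\gamma\,VV^*\,\O(M_L)^\gamma}$. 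Finally, every spanning element $k_M^{\otimes i}(m)k_M^{\otimes i}(n)^*$ of $\O(M_L)^\gamma$ factors, after writing $m=a\cdot m'$ and $n=b\cdot n'$, as $k_A(a)\big(k_M^{\otimes i}(m')k_M^{\otimes i}(n')^*\big)k_A(b^*)$, whose middle and right factors lie in $\O(M_L)^\gamma$; since $k_A(a)$ already lies in the ideal generated by $VV^*$, so does the whole product, and the ideal is therefore all of $\O(M_L)^\gamma$. Transporting back through $\kappa$ shows $\overline\beta(1)$ is full.
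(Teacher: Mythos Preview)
Your proof is correct and follows essentially the same approach as the paper: define $\beta=\kappa^{-1}\circ\alpha'\circ\kappa$, transport the structural properties from Theorem~\ref{relCPalg=St}, and verify \eqref{defbeta2} on rank-one operators via $U_j\Theta_{m,n}U_j^*=\Theta_{U_jm,U_jn}$ together with \eqref{defalpha'}. The only cosmetic difference is in the fullness argument: the paper factors off the \emph{last} tensor leg, writing $m=m'\otimes_A q(a)$ so that $k_M^{\otimes i}(m)k_M^{\otimes i}(n)^*$ visibly contains the factor $k_M(q(a))k_M(q(b))^*=k_A(a)VV^*k_A(b)^*$, whereas you first use Cuntz--Pimsner covariance to place $k_A(A)$ in the ideal and then factor from the left; both routes are short and valid.
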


\begin{proof}
We define $\beta:=\kappa^{-1}\circ\alpha'\circ \kappa$, where $\alpha'$ is the endomorphism from Theorem~\ref{relCPalg=St}. Let $a\cdot m,b\cdot n\in M_L^{\otimes i}$. Then $\kappa\circ \iota^i(\Theta_{a\cdot m,b\cdot n})=k_M^{\otimes i}(a\cdot m)k_M^{\otimes i}(b\cdot n)^*$, and  
\begin{align*}
\kappa\circ\beta\circ\iota^i(\Theta_{a\cdot m,b\cdot n})&=\alpha'\circ\kappa\circ\iota^i(\Theta_{a\cdot m,b\cdot n})\\
&=k_M^{\otimes (i+1)}(q(\alpha(a))\otimes_Am)k_M^{\otimes (i+1)}(q(\alpha(b))\otimes_An)^*\quad\quad\text{(using \eqref{defalpha'})}\\
&=\kappa\circ\iota^{i+1}(\Theta_{q(\alpha(a))\otimes_Am,q(\alpha(b))\otimes_A n})\\
&=\kappa\circ\iota^{i+1}(\Theta_{U_i(a\cdot m),U_i(b\cdot n)})\\
&=\kappa\circ\iota^{i+1}(U_i\Theta_{a\cdot m,b\cdot n}U_i^*).
\end{align*}
This gives \eqref{defbeta2} for $T\in\K(M_L^{\otimes i})$. For $j<i$ and $S\in \K(M_L^{\otimes j})$, we have
\begin{align*}
\beta(\iota^i(S\otimes_A1_{i-j}))&=\beta(\iota^j(S))=\iota^{j+1}(U_jSU_j^*)\\
&=\iota^{i+1}((U_j\otimes_A1_{i-j})(S\otimes_A1_{i-j})(U_j^*\otimes_A1_{i-j}))\\
&=\iota^{i+1}(U_i(S\otimes_A1_{i-j})U^*_i),
\end{align*}
and adding over $j$ gives \eqref{defbeta2}.
It now follows from Theorem~\ref{relCPalg=St} that $\kappa\times V$ is an isomorphism.

Now suppose that $\phi:A\to \L(M_L)$ has range in $\K(M_L)$. Then $A\otimes_A1_1=\phi(a)$ belongs to $K(M_L)$, and hence $C_\infty=\overline{\bigcup_{i=1}^\infty \iota^i(\K(M_L^{\otimes i}))}$. Now for $i\geq 1$ and $m=m'\otimes_A q(a)$ and $n=n'\otimes_A q(b)$ in $M_L^{\otimes i}$, we have
\begin{align*}
\kappa(\iota^1(\Theta_{m,n}))
&=k_M^{\otimes(i-1)}(m')k_M(q(a))k_M(q(b))^*k_M^{\otimes(i-1)}(m')^*\\
&=k_M^{\otimes(i-1)}(m')k_A(a)VV^*k_A(b)^*k_M^{\otimes(i-1)}(m')^*\\
&=k_M^{\otimes(i-1)}(m')k_A(a)\overline{\kappa}(\overline{\beta}(1))k_A(b)^*k_M^{\otimes(i-1)}(m')^*.
\end{align*}
Thus each $\iota^i(\K(M_L^{\otimes i}))$ is contained in the ideal generated by $\overline{\beta}(1)$, and $\overline{\beta}(1)$ is full.
\end{proof} 

The following exact sequence for the $K$-theory of $\O(M_L)$ looks a little different from the usual ones (in \cite{Pim} or \cite{Kat}, for example), because it involves the core rather than the coefficient algebra.

\begin{cor}\label{prop6term}
Let $(A, \alpha, L)$ be an Exel system such that $A$ is unital, $L$ is almost faithful on $A$, and $A$ acts by compact operators on $M_L$. Let $(C_\infty,\beta)$ be the Stacey system constructed in Corollary~\ref{CP=St}. Then there exists an exact sequence
\begin{equation}\label{6term}
\xymatrix{
K_0(C_\infty)\ar[r]^{\beta_*-\id}&K_0(C_\infty)\ar[r]^{\kappa_*}&K_0(\O(M_L))\ar[d]
\\
K_1(\O(M_L))\ar[u]&K_1(C_\infty)\ar[l]_{\kappa_*}&K_1(C_\infty).\ar[l]_{\beta_*-\id}
}
\end{equation}
\end{cor}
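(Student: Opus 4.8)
The plan is to obtain the sequence \eqref{6term} by applying Paschke's six-term exact sequence for Stacey crossed products by corner endomorphisms \cite{P2} to the Stacey system $(C_\infty,\beta)$ produced in Corollary~\ref{CP=St}, and then transporting the result along the isomorphism $\kappa\times V\colon C_\infty\times_\beta\N\to\O(M_L)$ supplied by that corollary. Before invoking \cite{P2} I would check that $(C_\infty,\beta)$ satisfies Paschke's hypotheses. First, $C_\infty$ is unital: since $A$ is unital, $k_A(1)$ is an identity for $\O(M_L)$, it lies in the core, and so $\O(M_L)^\gamma\cong C_\infty$ is unital (equivalently, each $C_i$ has unit $1_i=\phi^{(i)}(1)$ and $\phi_i(1_i)=1_{i+1}$). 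In particular $\overline{\beta}(1)=\beta(1)$ is an honest projection $p\in C_\infty$.

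Next, Corollary~\ref{CP=St} gives exactly the remaining structure required: $\beta$ is injective and carries $C_\infty$ onto the corner $\overline{\beta}(1)C_\infty\overline{\beta}(1)=pC_\infty p$, so $\beta$ is an isomorphism of $C_\infty$ onto the hereditary subalgebra $pC_\infty p$ --- that is, $\beta$ is a corner endomorphism. Finally, because $A$ acts by compact operators on $M_L$ (one of our standing hypotheses), the last sentence of Corollary~\ref{CP=St} tells us that $p=\beta(1)$ is a full projection. Thus $(C_\infty,\beta)$ is precisely the kind of system to which Paschke's exact sequence applies.

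With the hypotheses in hand, \cite{P2} produces a cyclic six-term exact sequence for the $K$-theory of the Stacey crossed product $C_\infty\times_\beta\N$, in which the horizontal maps $K_j(C_\infty)\to K_j(C_\infty)$ are $\beta_*-\id$ and the maps $K_j(C_\infty)\to K_j(C_\infty\times_\beta\N)$ are induced by the canonical nondegenerate inclusion $i_{C_\infty}\colon C_\infty\to C_\infty\times_\beta\N$. I then feed in the isomorphism $\kappa\times V$ of Corollary~\ref{CP=St}: applying the functor $K_*$ to $\kappa\times V$ identifies $K_*(C_\infty\times_\beta\N)$ with $K_*(\O(M_L))$, and since $(\kappa\times V)\circ i_{C_\infty}=\kappa$ by the defining property of $\kappa\times V$, the coefficient-algebra maps $(i_{C_\infty})_*$ become $\kappa_*$ after this identification. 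This yields precisely \eqref{6term}.

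The only genuine work lies in matching our data to the exact hypotheses of \cite{P2}: confirming the unitality of $C_\infty$, checking that $\beta$ is a corner endomorphism with full range projection (for which I lean entirely on Corollary~\ref{CP=St} together with the compact-action hypothesis), and tracking the maps through the transport isomorphism --- in particular verifying that the inclusion of the coefficient algebra pushes forward to $\kappa$, so that the maps in \eqref{6term} are the asserted ones. None of these steps is hard once Paschke's theorem is available; the substance of the corollary is simply the recognition that $(C_\infty,\beta)$ falls within the scope of \cite{P2}.
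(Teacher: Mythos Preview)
Your proposal is correct and follows essentially the same approach as the paper: verify that $C_\infty$ is unital, invoke Corollary~\ref{CP=St} to see that $\beta$ is a corner endomorphism with full range projection, and then apply Paschke's theorem from \cite{P2}. The paper's own proof is terser and does not explicitly track the transport along $\kappa\times V$ to identify the coefficient maps with $\kappa_*$, but your extra care on that point is entirely appropriate.
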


\begin{proof}
Since $A$ acts by compact operators on $M_L$ and $A$ is unital, the identity operator on $M_L$ is compact, and $C_\infty$ is unital. The range of $\beta$ is a full corner in $C_\infty$ by Corollary~\ref{CP=St}.  Hence Theorem~4.1 of \cite{P2} gives \eqref{6term}.
\end{proof}

\subsection{Connections with a construction of Exel}\label{connectExel}
In \cite[Theorem~6.5]{E2}, Exel shows that if $A$ is unital,  $\alpha$ is injective and unital, and  there is a faithful conditional expectation $E$ of $A$ onto $\alpha(A)$, then his crossed product $A\rtimes_{\alpha,\alpha^{-1}\circ E}\N$ is isomorphic to a Stacey crossed product $\check\AA\times_{\beta'}\N$. The $C^*$-algebra $\check\AA$ is by definition a subalgebra of the $C^*$-algebraic direct limit of a sequence of algebras of the form $\L(M_i)$ for certain Hilbert modules $M_i$. Exel's hypotheses on $\alpha$ imply that $L:=\alpha^{-1}\circ E$ is a faithful transfer operator for $\alpha$ satisfying $L(1)=1$, and hence that $\phi:A\to \L(M_L)$ is injective. Thus the Exel crossed product $A\rtimes_{\alpha,L}\N$ is the Cuntz-Pimsner algebra $\O(M_L)$, and Corollary~\ref{CP=St} gives an isomorphism of $C_\infty\times_{\beta}\N$ onto $A\rtimes_{\alpha,L}\N$. It is natural to ask whether Exel's system $(\check\AA,\beta')$ is the same as the system $(C_\infty,\beta)$ appearing in Corollary~\ref{CP=St}.

Exel's module $M_i$ is the Hilbert module over $\alpha^i(A)$ associated to the expectation $\alpha^i\circ L^i$ of $A$ onto $\alpha^i(A)$ (which he denotes by $\EE_i$), 
and hence is a completion of a copy $q_i(A)$ of $A$. By restricting the action we can view $M_i$ as a module over $\alpha^{i+1}(A)$, and this induces a linear map $j_i:M_i\to M_{i+1}$; Lemma~4.7 of \cite{E2} says that there is a homomorphism $\phi_i:\L(M_i)\to \L(M_{i+1})$ characterised by $\phi_i(T)\circ j_i=j_i\circ T$. (Exel writes the maps $j_i$ as inclusions.) Since $L^i$ is a transfer operator for $\alpha^i$, $\alpha^i\circ L^i$ extends to a self-adjoint projection $\check e_i$ in $\L(M_i)$. 
Exel's $C^*$-algebra $\check\AA$ is the $C^*$-subalgebra of $\varinjlim (\L(M_i),\phi_i)$ generated by the images of $A=\L(M_0)$ and $\{\check e_i:i\in\N\}$. Propositions 4.2 and 4.3 of \cite{E2} say that $\alpha$ and $L$ extend to isometric linear maps $\alpha_i:M_i\to M_{i+1}$ and $L_i:M_{i+1}\to M_i$, Proposition~4.6 of \cite{E2} says that the maps $\beta_i':T\mapsto \alpha_i\circ T\circ L_i$ are injective homomorphisms of $\L(M_i)$ into $\L(M_{i+1})$, and Proposition 4.10 of \cite{E2} says that they induce an endomorphism $\beta'$ of $\varinjlim (\L(M_i),\phi_i)$ which leaves $\check\AA$ invariant and satisfies $\beta'(\check e_i)=\check e_{i+1}$ for $i\geq 0$.

To compare our construction with that of \cite[\S4]{E2}, we use the maps 
\[
V_i:q(a_1)\otimes_A\cdots\otimes_Aq(a_i)\mapsto
q_i\big(a_1\alpha(a_2)\alpha^2(a_3)\cdots \alpha^{i-1}(a_i)\big);
\]
the pairs $(V_i,\alpha^i)$ then form compatible isomorphisms of $(M_L^{\otimes i},A)$ onto $(M_i,\alpha^i(A))$, and induce isomorphisms $\theta_i$ of $\L(M_L^{\otimes i})$ onto $\L(M_i)$. One quickly checks that the isometries $U_i$ of Corollary~\ref{defUi} satisfy
\begin{align*}
V_{i+1}U_iV_i^{-1}(q_i(a))&=V_{i+1}U_i(q(a)\otimes_A1\cdots\otimes_A1\\
&=V_{i+1}(q(1)\otimes_Aq(a)\otimes_A\cdots \otimes q(1))\\
&=q_{i+1}(\alpha(a))=\alpha_i(q_i(a)),
\end{align*}
and similiarly $V_iU_i^*V_{i+1}(q_{i+1}(a))=q_i(L(a))=L_i(q_{i+1}(a))$. Thus our endomorphism $\Ad U_i$ is carried into Exel's $\beta_i'$. The isomorphisms $\theta_i$ combine to give an injection of our direct limit $C_\infty=\varinjlim(C_i,\phi_i)$ into $\varinjlim (\L(M_i),\phi_i)$, and since $t^it^{*i}=\beta^i(1)$ is carried into $(\beta')^i(1)=\check e_i$, the formula \eqref{presentfpas} implies that the range of this injection is $\clsp\{a\check e_ib:a,b\in A\}$, which by \cite[Proposition~4.9]{E2} is precisely $\check\AA$. So $(C_\infty,\beta)$ is indeed isomorphic to $(\check\AA,\beta')$, and Corollary~\ref{CP=St} extends \cite[Theorem~6.5]{E2}.


\section{Graph algebras as crossed products}\label{sec:graph}
Let $E=(E^0, E^1, r,s)$ be a locally-finite directed graph with no sources or sinks. We use the conventions of \cite{CBMS}.  Briefly, we think of $E^0$ as vertices, $E^1$ as edges, and $r,s:E^1\to E^0$ as describing the range and source of an edge.  Locally finite means that $E$ is both row-finite and column-finite, so that both $r^{-1}(v)$ and $s^{-1}(v)$ are finite for every $v\in E^0$.  We write $E^*$ for the set of finite paths $\mu=\mu_1\dots\mu_n$ satisfying $s(\mu_i)=r(\mu_{i+1})$ for $1\leq i\leq n-1$, and $|\mu|$ for the length $n$ of this path.  Similarly, we write $E^n$ for the set of paths of length $n$ and  $E^\infty$ for the set of infinite paths $\eta=\eta_1\eta_2\dots$.   We equip the path space $E^\infty$ with the product topology inherited from $\prod_{n=1}^\infty E^1$, which is locally compact and Hausdorff  and has a basis  consisting of the cylinder sets $Z(\mu):=\{\eta\in E^\infty:\eta_i=\mu_i\ \text{for $1\leq i\leq |\mu|$}\}$ parametrised by $\mu\in E^*$.

Now consider the backward shift $\sigma$ on $E^\infty$ defined by $\sigma(\eta_1\eta_2\dots)=\eta_2\eta_3\dots$. Since $E$ has no sinks, $\sigma$ is surjective.  Since  $E$ is column-finite, $\sigma$ is a local homeomorphism which is proper in the sense that inverse images of compact sets are compact (see \cite[\S2.2]{BRV}).  Since $\sigma$ is proper, $\alpha:f\mapsto f\circ\sigma$ is a nondegenerate endomorphism of $C_0(E^\infty)$; since $E$ is column-finite, $\sigma^{-1}(\eta)$ is finite, and  
\[
L(f)(\eta)=\frac{1}{|\sigma^{-1}(\eta)|}\sum_{\sigma(\xi)=\eta} f(\xi)=\frac{1}{|s^{-1}(r(\eta))|}\sum_{s(e)=r(\eta)}f(e\eta)
\]
defines a transfer operator $L:C_0(E^\infty)\to C_0(E^\infty)$ for $\alpha$ \cite[Lemma~2.2]{BRV}. The same formula defines an operator $\overline{L}$ on $C_b(E^\infty)=M(C_0(E^\infty))$, so $(C_0(E^\infty), \alpha, L)$ is an Exel system. A partition of unity argument (as preceding \cite[Corollary~4.2]{BRV}) shows that the action of $C_0(E^\infty)$ on $M_L$ is by compact operators. Hence $C_0(E^\infty)\rtimes_{\alpha, L}\N:=\O(K_\alpha, M_L)=\O(M_L)$.

A Cuntz-Krieger $E$-family in a $C^*$-algebra $B$ consists of a set $\{P_v:v\in E^0\}$ of mutually orthogonal projections and a family $\{T_e:e\in E^1\}$ of partial isometries such that $T^*_eT_e=P_{s(e)}$ for all $e\in E^1$ and $P_v=\sum_{r(e)=v}T_eT_e^*$ for all $v\in E^0$.  The $C^*$-algebra $C^*(E)$ of $E$  is universal for Cuntz-Krieger $E$-families; we write $\{t,p\}$ for the universal Cuntz-Krieger $E$-family that generates $C^*(E)$.
See \cite{CBMS} for more details.

 For $e\in E^1$, we define $m_e:=|s^{-1}(s(e))|^{1/2}q(\chi_{Z(e)})$. Since $E$ is locally finite with no sources or sinks,  we know from \cite[Theorem~5.1]{BRV} that
\[
T_e:=k_M(m_e)= |s^{-1}(s(e))|^{1/2}k_M(q(\chi_{Z(e)}))\text{\ and\ }P_v:=k_A(\chi_{Z(v)})
\]
form a Cuntz-Krieger $E$-family in $\O(M_L)$, and that $\pi_{T,P}:t_e\mapsto T_e$ and $p_v\mapsto P_v$ is an isomorphism of the graph algebra $C^*(E)$ onto $C_0(E^\infty)\rtimes_{\alpha,L}\N=\O(M_L)$. Note that $\pi_{T,P}$ is equivariant for the gauge actions.  Pulling over the endomorphism $\alpha'$ of Theorem~\ref{relCPalg=St} gives an endomorphism $\beta=\pi_{T,P}^{-1}\circ\alpha'\circ \pi_{T,P}$ of the core
\[C^*(E)^\gamma=\clsp\big\{t_\mu t_\nu^*:\mu,\nu\in E^*\text{ satisfy }|\mu|=|\nu|\big\}.\] 
We are going to compute $\beta$.

Let $\mu=\mu_1\dots\mu_i$ be a finite path  and $m_\mu:=m_{\mu_1}\otimes_A\cdots\otimes_A m_{\mu_i}$. Then
\[
T_\mu=T_{\mu_1}T_{\mu_2}\cdots T_{\mu_i}=k_M(m_{\mu_1})k_M(m_{\mu_2})\cdots k_M(m_{\mu_i})=k_M^{\otimes i}(m_\mu).
\]
To compute $\beta$ we first note that
 $m_\mu=\chi_{Z(r(\mu))}\cdot m_\mu$ and \[\alpha(\chi_{Z(\mu)})=\chi_{Z(\mu)}\circ\sigma=\sum_{s(e)=\mu}\chi_{Z(e)}.\]  So for paths $\mu$ and $\nu$ of length $i$ we have
\begin{align*}
\pi_{T,P}&(\beta(t_\mu t_\nu^*))
=\alpha'(T_\mu T_\nu^*)
=\alpha'(k_M^{\otimes i}(m_\mu)k_M^{\otimes i}(m_\nu)^*)\\
&=\alpha'\big(k_M^{\otimes i}(\chi_{Z(r(\mu))}\cdot m_\mu)k_M^{\otimes i}(\chi_{Z(r(\nu))}\cdot m_\nu)^*\big)\\
&=k_M^{\otimes i+1}(q(\alpha(\chi_{Z(r(\mu))})\otimes m_\mu)k_M^{\otimes i+1}(q(\alpha(\chi_{Z(r(\nu))})\otimes m_\nu)^*\quad  \text{(using \eqref{defalpha'})}\\
&=\sum_{s(e)=r(\mu),\;s(f)=r(\nu)}(|s^{-1}(s(e))|\,|s^{-1}(s(f))|)^{-1/2}k_M^{\otimes i+1}(m_e\otimes_Am_\mu)k_M^{\otimes i+1}(m_f\otimes_Am_\nu)^*\\
&=\sum_{s(e)=r(\mu),\;s(f)=r(\nu)}(|s^{-1}(s(e))|\,|s^{-1}(s(f))|)^{-1/2}k_M^{\otimes i+1}(m_{e\mu})k_M^{\otimes i+1}(m_{f\nu})^*.
\end{align*}
Now recall that $\alpha'$ is conjugation by an isometry   $V=\lim_\lambda k_M(q(e_\lambda))$, where $\{e_\lambda\}$ is  is any approximate identity of $C_0(E^\infty)$.   A quick calculation with, for example,  the approximate identity $\{e_F=\sum_{e\in F} \chi_{Z(e)}\}$ indexed by finite subsets $F$ of $E^1$, shows that  $W:=\overline{\pi}_{T,P}^{-1}(V)=\sum_{e\in E^1}|s^{-1}(s(e))|^{-1/2}t_e$. So Theorem~\ref{relCPalg=St} gives:

\begin{prop}\label{CK=cp}
Suppose that $E$ is a locally-finite directed graph with no sources or sinks,  and $\{t_e,p_v\}$ is the universal Cuntz-Krieger $E$-family which generates $C^*(E)$. Then there is an endomorphism $\beta$ of the core $C^*(E)^\gamma$ such that
\begin{equation}\label{defbeta3}
\beta(t_\mu t_\nu^*)=\sum_{s(e)=r(\mu),\;s(f)=r(\nu)}\big(|s^{-1}(r(\mu))|\,|s^{-1}(r(\nu))|\big)^{-1/2}t_{e\mu} t_{f\nu}^*.
\end{equation}
The series
\begin{equation*}
\sum_{e\in E^1}|s^{-1}(s(e))|^{-1/2}t_e
\end{equation*}
converges strictly in $M(C^*(E))$ to an isometry $W$  satisfying $\beta(t_\mu t_\nu^*)=Wt_\mu t_\nu^* W^*$. If $\iota$ is the inclusion of the core $C^*(E)^\gamma$ in $C^*(E)$, then the associated  representation $\iota\times W$ of the Stacey crossed product $C^*(E)^\gamma\times_{\beta}\N$ is an isomorphism   onto $C^*(E)$. 
\end{prop}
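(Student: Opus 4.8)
The plan is to obtain every assertion by transporting the conclusions of Theorem~\ref{relCPalg=St} through the gauge-equivariant isomorphism $\pi_{T,P}$, exactly as Corollary~\ref{Ex=St2} was deduced from Theorem~\ref{relCPalg=St} via $\theta$. I would apply Theorem~\ref{relCPalg=St} to the Exel system $(C_0(E^\infty),\alpha,L)$ with $J=K_\alpha=\phi^{-1}(\K(M_L))$, so that $\O(J,M_L)=\O(M_L)=C_0(E^\infty)\rtimes_{\alpha,L}\N$, and define $\beta:=\pi_{T,P}^{-1}\circ\alpha'\circ\pi_{T,P}$. Since $\pi_{T,P}$ is gauge-equivariant it maps $C^*(E)^\gamma$ isomorphically onto the core $C_J=\O(M_L)^\gamma$, so $\beta$ is a genuine endomorphism of $C^*(E)^\gamma$, and it inherits injectivity and extendibility from $\alpha'$. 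The formula \eqref{defbeta3} is precisely the displayed computation preceding the statement, which rewrites $\alpha'(T_\mu T_\nu^*)$ using $T_\mu=k_M^{\otimes i}(m_\mu)$, the identities $m_\mu=\chi_{Z(r(\mu))}\cdot m_\mu$ and $\alpha(\chi_{Z(\mu)})=\sum_{s(e)=\mu}\chi_{Z(e)}$, and the defining formula \eqref{defalpha'}; I would simply record that this calculation establishes \eqref{defbeta3}.

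For the isometry $W$, I would extend $\pi_{T,P}$ to a $*$-isomorphism $\overline{\pi}_{T,P}\colon M(C^*(E))\to M(\O(M_L))$, which is a homeomorphism for the strict topologies, and set $W:=\overline{\pi}_{T,P}^{-1}(V)$ for the isometry $V$ of Theorem~\ref{relCPalg=St}. Then $W$ is an isometry because $V$ is and $\overline{\pi}_{T,P}^{-1}$ is a $*$-isomorphism, and $\beta(y)=WyW^*$ for $y\in C^*(E)^\gamma$ follows by pulling back the identity $\alpha'=(\Ad V)|_{C_J}$. To identify $W$ with the series I would run the strict-convergence argument of Theorem~\ref{relCPalg=St} for the specific approximate identity $e_F:=\sum_{e\in F}\chi_{Z(e)}$ indexed by finite subsets $F\subset E^1$; here the $Z(e)$ are disjoint compact-open sets covering $E^\infty$, so $\{e_F\}$ is indeed an approximate identity for $C_0(E^\infty)$. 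That proof gives $k_M(q(e_F))\to V$ strictly, while by the definition of $T_e$ we have $k_M(q(e_F))=\sum_{e\in F}|s^{-1}(s(e))|^{-1/2}T_e=\pi_{T,P}\big(\sum_{e\in F}|s^{-1}(s(e))|^{-1/2}t_e\big)$; applying $\overline{\pi}_{T,P}^{-1}$ and using strict continuity yields strict convergence of the series to $W$. The partial sums are bounded in norm by $\|q(e_F)\|\le 1$, which legitimises this passage to the limit.

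Finally, the pair $(\iota,W)$ is Stacey-covariant for $(C^*(E)^\gamma,\beta)$ because $\iota(\beta(y))=WyW^*=W\iota(y)W^*$ and $\iota$ is nondegenerate, the latter since the sums $\sum_{v\in F}p_v$ of vertex projections lie in the core and form an approximate identity for $C^*(E)$. I would then transport the isomorphism $\id\times V\colon C_J\times_{\alpha'}\N\to\O(M_L)$: the restriction $\pi_{T,P}|_{C^*(E)^\gamma}$ intertwines $\beta$ and $\alpha'$ and carries $W$ to $V$, so by the universal property of the Stacey crossed product it induces an isomorphism $C^*(E)^\gamma\times_\beta\N\to C_J\times_{\alpha'}\N$; composing this with $\id\times V$ and with $\pi_{T,P}^{-1}$ gives an isomorphism $C^*(E)^\gamma\times_\beta\N\to C^*(E)$ which agrees with $\iota\times W$ on the generating copy of the core and on the Stacey isometry, and hence equals it. Alternatively one can apply the dual-invariant uniqueness theorem (Proposition~\ref{diut}) to $(\iota,W)$ with the gauge action on $C^*(E)$, for which $\gamma_z(\iota(y))=\iota(y)$ and $\gamma_z(W)=zW$, and verify surjectivity separately. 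The only step demanding genuine care, as opposed to bookkeeping through $\pi_{T,P}$, is the strict convergence of the series for $W$ together with the strict continuity of $\overline{\pi}_{T,P}^{-1}$ used to carry it across the isomorphism; the remainder is a formal transport of Theorem~\ref{relCPalg=St}.
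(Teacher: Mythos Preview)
Your proposal is correct and follows essentially the same approach as the paper: the paper derives Proposition~\ref{CK=cp} by applying Theorem~\ref{relCPalg=St} to the Exel system $(C_0(E^\infty),\alpha,L)$ and transporting the resulting endomorphism $\alpha'$, isometry $V$, and isomorphism $\id\times V$ through the gauge-equivariant isomorphism $\pi_{T,P}$, with the formula~\eqref{defbeta3} and the identification $W=\overline{\pi}_{T,P}^{-1}(V)=\sum_e|s^{-1}(s(e))|^{-1/2}t_e$ coming from the computations displayed immediately before the statement. Your write-up is somewhat more explicit about the bookkeeping (strict continuity of $\overline{\pi}_{T,P}^{-1}$, nondegeneracy of $\iota$, the alternative via Proposition~\ref{diut}), but the underlying argument is the same.
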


Although we found the endomorphism $\beta$ using our general construction, and were surprised to find it, we have now learned that other authors have shown that Cuntz-Krieger algebras can be realised as a Stacey crossed product by an endomorphism of the core (see \cite[Example~2.5]{Ror}, \cite[Theorem~3.2]{Kwa}, and Remark~\ref{remark-kwa} below). Now that we have found our $\beta$ we should be able to prove Proposition~\ref{CK=cp} directly. Before doing this we will revisit the need for the hypotheses on the graph $E$: we used row-finiteness and the lack of sources to get that the  path space $E^\infty$ is locally compact, but a direct proof should  not go through $C_0(E^\infty)$.
Our formula for $\beta$, on the other hand, makes sense when $E$ is column-finite and has no sinks: the coefficients in \eqref{defbeta3} are crucial, as we will see in the proof of the next result. It seems likely that column-finiteness is necessary. There is no obvious way to adjust for sinks, either: if we try to interpret empty sums as $0$, then $\beta(t_\mu t_\nu^*)$ would be $0$ if either $\mu$ or $\nu$ ends at a sink, but this property is not preserved by multiplication. (If $\nu$ ends at a sink but $\mu$ doesn't, then we'd have $\beta(t_\mu t_\nu^*)=0$ but $\beta((t_\mu t_\nu^*)(t_\nu t_\mu^*))=\beta(t_\mu t_\mu^*)\not=0$.) So we settle for the following.

\begin{thm}\label{genendodecomp}
Suppose that $E$ is a column-finite directed graph with no sinks, and $\{t_e,p_v\}$ is the universal Cuntz-Krieger $E$-family which generates $C^*(E)$. Then there is an endomorphism $\beta$ of the core $C^*(E)^\gamma$ such that
\begin{equation}\label{defbeta666}
\beta(t_\mu t_\nu^*)=\sum_{s(e)=r(\mu),\;s(f)=r(\nu)}\big(|s^{-1}(r(\mu))|\,|s^{-1}(r(\nu))|\big)^{-1/2}t_{e\mu} t_{f\nu}^*.
\end{equation}
The series 
\begin{equation*}
\sum_{e\in E^1}|s^{-1}(s(e))|^{-1/2}t_e
\end{equation*}
converges strictly in $M(C^*(E))$ to an isometry $W$  satisfying $\beta(t_\mu t_\nu^*)=Wt_\mu t_\nu^* W^*$. If $\iota$ is the inclusion of the core $C^*(E)^\gamma$ in $C^*(E)$, then the associated  representation $\iota\times W$ of the Stacey crossed product $C^*(E)^\gamma\times_{\beta}\N$ is an isomorphism  onto $C^*(E)$. 
 \end{thm}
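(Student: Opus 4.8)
The plan is to imitate the proof of Theorem~\ref{relCPalg=St}, but to work entirely inside $C^*(E)$ rather than passing through the Exel system of $C_0(E^\infty)$ (which is exactly the step that needed row-finiteness and the absence of sources). Concretely, I would first build the isometry $W$ as a strict limit of the partial sums $W_F:=\sum_{e\in F}c_e t_e$ over finite $F\subseteq E^1$, where $c_e:=|s^{-1}(s(e))|^{-1/2}$; then set $\beta:=\Ad W|_{C^*(E)^\gamma}$ and check the formula \eqref{defbeta666}; and finally identify the Stacey crossed product using the dual-invariant uniqueness theorem, Proposition~\ref{diut}. Throughout I would use the Cuntz-Krieger relations $t_e^*t_f=\delta_{e,f}\,p_{s(e)}$ and $t_et_\mu=t_{e\mu}$ if $s(e)=r(\mu)$ (and $0$ otherwise), together with the fact that $\{\sum_{v\in F}p_v\}$ is an approximate identity for $C^*(E)$.

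The crux is the convergence of $W$ to an isometry, and this is where the hypotheses and the coefficients are all indispensable. Using $t_e^*t_f=\delta_{e,f}p_{s(e)}$ and grouping by source vertex,
\[
W_F^*W_F=\sum_{e\in F}c_e^2\,p_{s(e)}=\sum_v\big(\#\{e\in F:s(e)=v\}\big)\,|s^{-1}(v)|^{-1}\,p_v\leq\sum_{v\in s(F)}p_v\leq 1,
\]
so $\|W_F\|\leq 1$. For a spanning element $t_\mu t_\nu^*$, column-finiteness makes $s^{-1}(r(\mu))$ finite, so $W_Ft_\mu t_\nu^*=\sum_{s(e)=r(\mu),\,e\in F}c_e\,t_{e\mu}t_\nu^*$ stabilises once $F\supseteq s^{-1}(r(\mu))$, and a symmetric computation handles right multiplication. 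An $\epsilon/3$ argument using $\|W_F\|\leq 1$ and density then shows $\{W_F\}$ is strictly Cauchy, hence converges strictly to $W\in M(C^*(E))$. Finally, the absence of sinks gives $\#\{e:s(e)=v\}=|s^{-1}(v)|\geq 1$ for every $v$, so the coefficients $c_e$ are chosen precisely so that $W_F^*W_F$ converges strictly to $\sum_v p_v=1$; thus $W^*W=1$ and $W$ is an isometry. I expect column-finiteness (finiteness of the sums defining the action of $W$), the no-sinks hypothesis, and the normalising coefficients to be genuinely needed here, matching the paper's remark that the coefficients are crucial.

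With $W$ available, $\Ad W:x\mapsto WxW^*$ is a $*$-endomorphism of $C^*(E)$ (using $W^*W=1$). Computing $Wt_\mu=\sum_{s(e)=r(\mu)}c_e\,t_{e\mu}$ (a finite sum, by column-finiteness) and its adjoint, I would verify for $|\mu|=|\nu|$ that
\[
\Ad W(t_\mu t_\nu^*)=\sum_{s(e)=r(\mu),\,s(f)=r(\nu)}\big(|s^{-1}(r(\mu))|\,|s^{-1}(r(\nu))|\big)^{-1/2}t_{e\mu}t_{f\nu}^*,
\]
replacing $c_e$ by $|s^{-1}(r(\mu))|^{-1/2}$ since $s(e)=r(\mu)$. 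The right-hand side lies in the core, so $\Ad W$ maps $C^*(E)^\gamma$ into itself and $\beta:=\Ad W|_{C^*(E)^\gamma}$ is an endomorphism satisfying \eqref{defbeta666}, with $\beta(t_\mu t_\nu^*)=Wt_\mu t_\nu^*W^*$. By construction $(\iota,W)$ is a Stacey-covariant representation of $(C^*(E)^\gamma,\beta)$, and $\iota$ is nondegenerate because the core contains the approximate identity $\{\sum_{v\in F}p_v\}$ for $C^*(E)$; the universal property thus yields $\iota\times W:C^*(E)^\gamma\times_\beta\N\to C^*(E)$.

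It remains to show $\iota\times W$ is an isomorphism. For faithfulness I would apply Proposition~\ref{diut}: $\iota$ is faithful, and the gauge action $\gamma$ on $C^*(E)$ fixes the core pointwise (as $\gamma_z(t_\mu t_\nu^*)=z^{|\mu|-|\nu|}t_\mu t_\nu^*=t_\mu t_\nu^*$ when $|\mu|=|\nu|$) while $\overline{\gamma}_z(W)=zW$ follows from $\gamma_z(t_e)=zt_e$ and strict continuity; hence $\iota\times W$ is faithful. For surjectivity, the range is a $C^*$-subalgebra containing the core and $W$, and I would recover each edge generator from
\[
t_e=|s^{-1}(s(e))|^{1/2}\,(t_et_e^*)\,W\,p_{s(e)},
\]
a short computation using $Wp_{s(e)}=c_e\sum_{s(g)=s(e)}t_g$ and $t_et_e^*t_g=\delta_{e,g}t_e$. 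Since $t_et_e^*$ is a length-one and $p_{s(e)}$ a length-zero element of $C^*(E)^\gamma$, every $t_e$ lies in the range; as the $t_e$ and $p_v$ generate $C^*(E)$, the map is onto, and therefore an isomorphism.
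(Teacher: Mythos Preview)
Your route differs from the paper's and is, where it works, considerably cleaner: you build $W$ first and then set $\beta:=\Ad W|_{C^*(E)^\gamma}$, so that well-definedness and multiplicativity of $\beta$ come for free. The paper instead constructs $\beta$ directly on the core, using the structural results of Appendix~\ref{analysecore}: it checks that the right-hand side of \eqref{defbeta666} sends matrix units in $\F_i(v)$ to matrix units in $\F_{i+1}(v)$, glues the resulting maps $\beta_i$ via the $i$-expansion of Proposition~\ref{describecore}, and verifies multiplicativity by hand. Only afterwards does the paper build $W$, by grouping the sum as $\sum_{v}T_v$ with $T_v=\sum_{s(e)=v}c_et_e$, and then check covariance. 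Both routes finish identically: Proposition~\ref{diut} for injectivity, and the identity $t_e=c_e^{-1}(t_et_e^*)W$ for surjectivity. What your approach buys is economy; what the paper's buys is an independent construction of $\beta$ that does not presuppose $W$.

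There is, however, a real gap in your strict-convergence step. The ``symmetric computation'' for right multiplication does not go through: for a spanning element $t_\mu t_\nu^*$ with $|\nu|=0$ (that is, $t_\mu$), one has $t_\mu W_F=\sum_{e\in F,\;r(e)=s(\mu)}c_e\,t_{\mu e}$, and this does \emph{not} stabilise when $r^{-1}(s(\mu))$ is infinite---precisely the situation Theorem~\ref{genendodecomp} is meant to add to Proposition~\ref{CK=cp}. Concretely, if $v$ is an infinite receiver and $e\in r^{-1}(v)\setminus F$, then $p_v(W_{F\cup\{e\}}-W_F)=c_et_e$ has norm $|s^{-1}(s(e))|^{-1/2}$, which need not be small; so $\{p_vW_F\}$ is not Cauchy. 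The paper's argument is equally brief at this point (it appeals to the principle that partial isometries with mutually orthogonal initial and final projections sum strictly), and the same examples obstruct it: the initial projections $T_v^*T_v=p_v$ do sum strictly, which handles left multiplication, but the range projections $T_vT_v^*$ need not, and right multiplication fails for the same reason. So this is a step where both your argument and the paper's leave something unverified once the graph has infinite receivers; in the locally-finite setting of Proposition~\ref{CK=cp} the issue does not arise because there $W$ is obtained from Theorem~\ref{relCPalg=St}.
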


\begin{proof}
We use the notation of \cite[\S3]{CBMS} and Appendix~A. For each $v\in E^0$, $\{t_\mu t_\nu^*:|\mu|=|\nu|=i, s(\mu)=s(\nu)=v\}$ is a set of matrix units for $\F_i(v)$ (see \cite[page~312]{BPRS}). We claim their images $e_{\mu,\nu}$ under $\beta$ in $\F_{i+1}(v)$, defined by the right-hand side of \eqref{defbeta666}, are also matrix units. The product $e_{\mu,\nu}e_{\kappa,\lambda}$ contains terms like $t_{e\mu}t_{f\nu}^*t_{g\kappa}t_{h\lambda}^*$, which is zero unless $f=g$ and $\nu=\kappa$. Since we then have $r(\nu)=r(\kappa)$, the two central terms in the coefficient are the same, and
\[
e_{\mu,\nu}e_{\kappa,\lambda}=\sum_{s(e)=r(\mu),\;s(f)=r(\nu),\;s(h)=r(\lambda)}|s^{-1}(r(\mu))|^{-1/2}|s^{-1}(r(\nu))|^{-1}|s^{-1}(r(\lambda))|^{-1/2}t_{e\mu}t_{h\lambda}^*.
\]
For fixed $e$ and $h$, there are $|s^{-1}(r(\nu))|$ edges $f$  with $s(f)=r(\nu)$, and for each of these the summand is exactly the same. So 
\[
e_{\mu,\nu}e_{\kappa,\lambda}=\sum_{s(e)=r(\mu),\;s(h)=r(\lambda)}|s^{-1}(r(\mu))|^{-1/2}|s^{-1}(r(\lambda))|^{-1/2}s_{e\mu}s_{h\lambda}^*=e_{\mu,\lambda}.
\]
(Notice that the coefficients in the definition of $e_{\mu,\nu}$ had to be just right for this to work.) Thus $\{e_{\mu,\nu}\}$ is a set of matrix units as claimed, and there is a well-defined homomorphism $\beta_i:\F_i(v)\to \F_{i+1}(v)$ satisfying \eqref{defbeta666} (with $\beta$ replaced by $\beta_i$). These combine to give a homomorphism $\beta_i$ of $\F_i=\bigoplus_v \F_i(v)$ into $\F_{i+1}=\bigoplus_v\F_{i+1}(v)$.

To define $\beta$ on $C_i:=\F_0+\F_1+\cdots +\F_i$, we take the $i$-expansion $c=\sum_{j=0}^ic_j$ described in Proposition~\ref{describecore}(b), and define
$\beta^i(c)=\sum_{j=0}^i\beta_j(c_j)$. The uniqueness of the $i$-expansion implies that this gives a well-defined function $\beta^i$ on each $C_i$; to check that they give a well-defined function on $\bigcup_{i=0}^\infty C_i$, we need to check that $\beta^i(c)=\beta^{i+1}(c)$ for $c\in C_i\subset C_{i+1}$. Suppose that $c\in C_i$ has $i$-expansion $c=\sum_{j=0}^ic_j$. Then the $(i+1)$-expansion is $c=\big(\sum_{j=0}^{i-1}c_j\big)+c_i'+d$, where $c_i'\in \EE_i$ and $d\in \F_i\cap \F_{i+1}$ are uniquely determined by $c_i=c_i'+d$. Lemma~\ref{idcap} implies that if $t_\mu t_\nu^*$ belongs to $\F_i\cap \F_{i+1}$, then $v:=s(\mu)=s(\nu)$ satisfies $0<|r^{-1}(v)|<\infty$, and two applications of the Cuntz-Krieger relation at $v$ show that
\begin{align*}
\beta_{i+1}(t_\mu t_\nu^*)&=\beta_{i+1}\Big(\sum_{r(g)=v}t_{\mu g} t_{\nu g}^*\Big)\\
&=\sum_{r(g)=v,\;s(e)=r(\mu g),\;s(f)=r(\nu g)}\big(|s^{-1}(r(\mu))|\,|s^{-1}(r(\nu))|\big)^{-1/2}t_{e\mu g} t_{f\nu g}^*\\
&=\sum_{s(e)=r(\mu),\;s(f)=r(\nu)}\big(|s^{-1}(r(\mu))|\,|s^{-1}(r(\nu))|\big)^{-1/2}t_{e\mu} t_{f\nu}^*\\
&=\beta_i(t_\mu t_\nu^*).
\end{align*}
Thus $\beta_i(c_i')+\beta_{i+1}(d)=\beta_i(c_i')+\beta_i(d)=\beta_i(c_i)$. Thus
\[
\beta^{i+1}(c)=\Big(\sum_{j=0}^{i-1} \beta_i(c_j)\Big)+\beta_i(c_i')+\beta_{i+1}(d)=\Big(\sum_{j=0}^{i-1} \beta_i(c_j)\Big)+\beta_i(c_i)=\beta^i(c).
\]
  
At this stage we have a well-defined map $\beta:\bigcup_{i=0}^\infty C_i\to C^*(E)^\gamma$ satisfying \eqref{defbeta666}. This map is certainly linear, and we need to prove that it is multiplicative. We consider $t_\mu t_\nu^*\in \F_i$ and $t_\kappa t_\lambda^*\in \F_j$, and we may as well suppose $i\leq j$. Then multiplying together the two formulas for $\beta^i(t_\mu t_\nu^*)$ and $\beta^j(t_\kappa t_\lambda^*)$ gives a linear combination of things like $t_{e\mu}t_{f\nu}^*t_{g\kappa} t_{h\lambda}^*$. Because $i\leq j$, this product is $0$ unless $g\kappa=f\nu\kappa'$, and then it is $t_{e\mu\kappa'}t_{h\lambda}^*$. So the sum collapses just as it did in the first paragraph, and we obtain the formula for 
\[
\beta(t_{\mu\kappa'}t_{\lambda}^*)=\beta((t_\mu t_\nu^*)(t_\kappa t_\lambda^*)).
\]
Thus $\beta$ is multiplicative. Since it is clearly $*$-preserving, it is a $*$-homomorphism, and as such is automatically norm-decreasing on each $C_i$. Thus $\beta$ extends to an endomorphism, also called $\beta$, of $\overline{\bigcup_{i=0}^\infty C_i}=C^*(E)^\gamma$. 

Next, note that for each $v\in E^0$, the partial isometries $\{t_e:s(e)=v\}$ have the same initial projection $p_v$, and mutually orthogonal range projections $t_et_e^*$, so\footnote{It is important here that there are only finitely many summands, so  column-finiteness is crucial.} $T_v:=\sum_{s(e)=v}|s^{-1}(v)|^{-1/2}t_e$ is a partial isometry with initial projection $p_v$ and range projection $\sum_{s(e)=v=s(f)}|s^{-1}(v)|^{-1}t_et_f^*$. Now the partial isometries $\{T_v:v\in E^0\}$ have mutually orthogonal initial projections and mutually orthogonal range projections, and hence their sum converges strictly to a partial isometry $W$ with initial projection $\sum_{v\in E^0}p_v=1_{M(C^*(E))}$. In other words, $W$ is an isometry.

The covariance relation $\beta(t_\mu t_\nu^*)=Wt_\mu t_\nu^* W^*$ is easy to check, and the universal property of the Stacey crossed product $(C^*(E)^\gamma\times_{\beta}\N,i_{C^*(E)^\gamma},v)$ gives a homomorphism $\iota\times W$ such that $(\iota\times W)\circ i_{C^*(E)^\gamma}=\iota$ and $(\iota\times W)(v)=W$. This homomorphism satisfies $(\iota\times W)\circ\hat\beta_z=\gamma_z\circ(\iota\times W)$, and since $\iota$ is faithful (being an inclusion), the dual-invariant uniqueness theorem (Proposition~\ref{diut}) implies that $\iota\times W$ is injective. The range contains each $t_e=|s^{-1}(s(e))|^{1/2}\iota(t_et_e^*)W$, and hence $\iota\times W$ is surjective.
\end{proof}

\begin{remark}\label{remark-kwa}
When the graph $E$ is finite and has no sinks, the endomorphism $\beta$ has also been found by Kwa\'sniewski \cite{Kwa}. He proves in \cite[Theorem~3.2]{Kwa} that $C^*(E)$ is isomorphic to a partial-isometric crossed product $C^*(E)^\gamma\rtimes_\beta\Z$ as introduced in \cite{ABL}. He then applies general results about partial-isometric crossed products from \cite{ABL} and \cite[\S1]{Kwa} to $(C^*(E)^\gamma,\beta)$, and recovers many of the main structure theorems for graph $C^*$-algebras, as they apply to finite graphs with no sinks \cite[\S3]{Kwa}.  For such graphs $E$, the endomorphism $\beta$ is conjugation by an isometry in $C^*(E)$ and is injective. Theorem~4.15 of \cite{ABL} implies that $C^*(E)^\gamma\rtimes_\beta\Z$ is isomorphic to the Exel crossed product $C^*(E)^\gamma\rtimes_{\beta,K}\N$, where $K=\beta^{-1}\circ\Ad\bar\beta(1)$. We can then deduce from \cite[Theorem~4.7]{E} that $C^*(E)$ is isomorphic to the Stacey crossed product $C^*(E)^\gamma\times_{\beta}\N$, as in Theorem~\ref{genendodecomp}.
\end{remark}

\begin{example}\label{remCuntz}
Suppose that $E$ is the bouquet of $n$ loops on a single vertex, so that $C^*(E)$ is the Cuntz algebra $\OO_n$. Then $\beta$ is not quite the same as the endomorphism $\alpha$ in the usual description of $\O_n$ as a Stacey crossed product of its core \cite{C,S,BKR}, which is defined in terms of the infinite tensor-product decomposition $\OO_n^\gamma=\bigotimes_{k=1}^\infty M_n(\C)$ by $\alpha:\bigotimes_{k=1}^\infty a_k\mapsto e_{11}\otimes\big(\bigotimes_{k=2}^\infty a_{k-1}\big)$. But it is closely related: $p=\sum_{i,j=1}^n n^{-1}e_{ij}$ is also a rank-one projection, and 
\begin{equation}\label{defendp}
\textstyle{\beta\big(\bigotimes_{k=1}^\infty a_k\big)=p\otimes\big(\bigotimes_{k=2}^\infty a_{k-1}\big).}
\end{equation} 
If $u$ is a unitary matrix such that $ue_{11}u^*=p$, then $u\otimes 1$ is a unitary in $\bigotimes_{k=1}^\infty M_n(\C)$ such that $(u\otimes 1)\alpha(a)(u\otimes 1)^*=\beta(a)$ for all $a\in \bigotimes_{k=1}^\infty M_n(\C)$. In particular, the crossed products $\O_n^\gamma\times_\alpha\N$ and $\O_n^\gamma\times_\beta\N$ are isomorphic. This argument would work for any choice of rank-one projection $p$; ours has the advantage that no obvious choice is necessary.

One naturally asks whether it is a coincidence that our convoluted constructions  yields essentially the same endomorphism, and we will show in the next section that for every projection $p$ in $M_n(\C)$, rank-one or not, there is an endomorphism $\beta_p$ of $\O_n$ satisfying \eqref{defendp}. Particularly interesting is the endomorphism associated to the identity matrix $1_n$, which turns out to be the ``canonical endomorphism'' $\beta_1(a)\mapsto \sum_{i=1}^n s_ias_i^*$. Since $\beta_1$ is unital, the Stacey crossed product is not interesting, but there is a natural transfer operator, and hence a potentially interesting Exel crossed product. We will study this in the next section. 
\end{example}

\begin{remark}
In a graph algebra $C^*(E)$, it does not seem to be so easy to write down other endomorphisms of the core, or at least ones which are not unitarily equivalent to the $\beta$ of Theorem~\ref{genendodecomp}. The map $\beta_1:a\mapsto \sum_{e\in E^1}s_eas_e^*$, which is sometimes referred to as the ``canonical endomorphism," is not a homomorphism for most graphs\footnote{To see this, consider a pair of paths $\mu$, $\nu$ with $s(\mu)=s(\nu)$ but $r(\mu)\not=r(\nu)$. Since $s_es_\mu=0$ unless $s(e)=r(\mu)$, and similarly for $s_{\nu}^*s_e^*$, we have $\beta_1(s_\mu s_\nu^*)=0$, but $\beta_1((s_\mu s_\nu^*)(s_\mu s_\nu^*)^*)=\beta_1(s_\mu s_\mu^*)\not=0$.}.
\end{remark}


\section{Endomorphisms of the UHF core of $\O_n$}\label{sec:Cuntz}

We fix $n\geq 2$, and view the UHF algebra $A:=\UHF(n^\infty)$ as the direct limit of the tensor powers $M_n^{\otimes k}$ of $M_n(\C)$ with bonding maps $\phi_k:a_1\otimes\cdots\otimes a_k\mapsto a_1\otimes\cdots\otimes a_k\otimes 1$. We think of elements of $A$ as infinite tensors, so we write $a_1\otimes\cdots\otimes a_k\otimes 1_\infty$ for the image of $a_1\otimes\cdots\otimes a_k\in M_n^{\otimes k}$ in $A$. These elements span a dense $*$-subalgebra of $A$.

Let $\{e_{ij}:1\leq i,j\leq n\}$ be the usual matrix units in $M_n$. Then the elementary tensors $e_{\mu\nu}:=e_{\mu_1\nu_1}\otimes\cdots \otimes e_{\mu_k\nu_k}$ parametrised by multiindices $\mu,\nu\in \{1,\dots,n\}^k$ are matrix units which span $M_n^{\otimes k}$. The next lemma is standard.

\begin{lemma}\label{UHF=core}
Let $\{s_i:1\leq i\leq n\}$ be the universal Cuntz family in $\O_n$, and let $\gamma:\T\to \Aut\O_n$ be the gauge action. Then there is an isomorphism $\psi$ of $A$ onto $\O_n^\gamma$ such that
\[
\psi(e_{\mu\nu}\otimes 1_\infty)=s_\mu s_\nu^*:=s_{\mu_1}\cdots s_{\mu_n}s_{\nu_n}^*\cdots s_{\nu_1}^*.
\]
\end{lemma}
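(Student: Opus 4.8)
The plan is to construct the isomorphism $\psi$ using the universal property of the UHF algebra $A = \varinjlim(M_n^{\otimes k}, \phi_k)$, by producing a compatible system of embeddings of each $M_n^{\otimes k}$ into $\O_n^\gamma$ and then identifying the direct limit with the core. First I would define, for each $k$, a map $\psi_k : M_n^{\otimes k} \to \O_n$ by sending the matrix unit $e_{\mu\nu}$ (for $\mu,\nu \in \{1,\dots,n\}^k$) to $s_\mu s_\nu^*$. The standard Cuntz relations $s_i^* s_j = \delta_{ij}1$ and $\sum_{i=1}^n s_i s_i^* = 1$ show immediately that the elements $\{s_\mu s_\nu^* : \mu,\nu \in \{1,\dots,n\}^k\}$ form a system of $n^k \times n^k$ matrix units: one checks $s_\mu s_\nu^* \, s_\kappa s_\lambda^* = \delta_{\nu\kappa}\, s_\mu s_\lambda^*$ and $(s_\mu s_\nu^*)^* = s_\nu s_\mu^*$. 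Since these matrix units are nonzero (as $s_\mu^* s_\mu = 1$ forces $s_\mu \neq 0$), $\psi_k$ is an injective $*$-homomorphism of $M_n^{\otimes k}$ onto $\lsp\{s_\mu s_\nu^* : |\mu|=|\nu|=k\}$.

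Next I would verify compatibility with the bonding maps, namely that $\psi_{k+1} \circ \phi_k = \psi_k$. This is exactly the content of the Cuntz relation applied to the extra tensor factor: for $\mu,\nu \in \{1,\dots,n\}^k$,
\begin{align*}
\psi_{k+1}(\phi_k(e_{\mu\nu})) &= \psi_{k+1}\Big(e_{\mu\nu}\otimes 1\Big) = \psi_{k+1}\Big(\sum_{i=1}^n e_{\mu i,\, \nu i}\Big)\\
&= \sum_{i=1}^n s_{\mu i} s_{\nu i}^* = \sum_{i=1}^n s_\mu s_i s_i^* s_\nu^* = s_\mu\Big(\sum_{i=1}^n s_i s_i^*\Big)s_\nu^* = s_\mu s_\nu^* = \psi_k(e_{\mu\nu}),
\end{align*}
where I have written $1 = \sum_i e_{ii}$ in the last tensor slot. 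By the universal property of the direct limit, the $\psi_k$ assemble into a single injective $*$-homomorphism $\psi : A \to \O_n$ with $\psi(e_{\mu\nu}\otimes 1_\infty) = s_\mu s_\nu^*$, and injectivity of $\psi$ follows since each $\psi_k$ is injective (equivalently, $A$ is simple).

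It then remains to identify the range of $\psi$ with the core $\O_n^\gamma$. The image of $\psi$ is the closure of $\bigcup_k \lsp\{s_\mu s_\nu^* : |\mu|=|\nu|=k\}$. Under the gauge action $\gamma_z(s_i) = z s_i$, one has $\gamma_z(s_\mu s_\nu^*) = z^{|\mu|-|\nu|} s_\mu s_\nu^*$, so every element $s_\mu s_\nu^*$ with $|\mu|=|\nu|$ is gauge-invariant, giving $\operatorname{range}\psi \subseteq \O_n^\gamma$. For the reverse inclusion I would invoke the standard description of the gauge-fixed algebra: applying the faithful conditional expectation $\Phi(x) = \int_\T \gamma_z(x)\,dz$ to the dense spanning set $\{s_\mu s_\nu^* : \mu,\nu\}$ of $\O_n$ kills all terms with $|\mu|\neq|\nu|$ and fixes the rest, so $\O_n^\gamma = \Phi(\O_n) = \clsp\{s_\mu s_\nu^* : |\mu|=|\nu|\}$, which is exactly $\operatorname{range}\psi$. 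The main obstacle, and the only step requiring genuine care rather than bookkeeping, is this last identification of $\O_n^\gamma$ with the closed span of the diagonal-length monomials; this is a well-known fact about Cuntz algebras (the lemma is described as standard), so I would cite it rather than reprove the conditional-expectation argument in full, noting that the spanning sets at each level $k$ nest correctly into one another via the computation above.
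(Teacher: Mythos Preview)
Your argument is correct and is exactly the standard one: build injective homomorphisms $\psi_k:M_n^{\otimes k}\to\O_n$ via matrix units, check compatibility with the bonding maps using the Cuntz relation, pass to the direct limit, and identify the range with $\O_n^\gamma$ via the conditional expectation $\Phi(x)=\int_\T\gamma_z(x)\,dz$. The paper itself gives no proof at all --- it simply declares the lemma ``standard'' and moves on --- so there is nothing to compare; you have supplied precisely the proof the authors chose to omit.
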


We now take $N\leq n$, and let $p$ be the diagonal matrix $1_N\oplus 0_{n-N}$ in $M_n$. Then the maps $\alpha_k:a\mapsto p\otimes a$ of $M_n^{\otimes k}$ into $M_n^{\otimes (k+1)}$ are homomorphisms which satisfy $\alpha_{k+1}\circ \phi_k=\phi_{k+1}\circ\alpha_k$, and hence induce an endomorphism $\alpha$ of $A$ such that
\begin{equation}\label{UHFaction}
\alpha(a_1\otimes\cdots\otimes a_k\otimes 1_\infty)=p\otimes a_1\otimes\cdots\otimes a_k\otimes 1_\infty.
\end{equation}
Next, we identify the corner $pM_np$ with $M_N(\C)$ in the obvious way, and let $\tr:pM_np\to \C$ be the normalised trace on $pM_np$.

\begin{lemma}\label{propL}
There is a positive linear map $L:A\to A$ such that
\begin{equation}\label{deftransfer}
L(a_1\otimes\cdots\otimes a_k\otimes 1_\infty)=\tr(pa_1p)(a_2\otimes\cdots\otimes a_k\otimes 1_\infty).
\end{equation}
Then $L$ is a transfer operator for $\alpha$, and $L$ is almost faithful.
\end{lemma}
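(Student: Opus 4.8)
The plan is to build $L$ level by level from a single state on $M_n$, to verify the transfer identity by a direct tensor computation, and then to prove almost faithfulness using a slice-map decomposition together with the simplicity of $A$.

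First I would record that $\omega(x):=\tr(pxp)=\tfrac1N\Tr(px)$ defines a state on $M_n$: it is positive (for $px^*xp=(xp)^*(xp)\in pM_np$ and $\tr$ is a positive trace on $pM_np$) and $\omega(1)=\tr(p)=1$ since $1\le N$. Formula \eqref{deftransfer} is then exactly the first-factor slice by $\omega$: for $a\in M_n^{\otimes k}\otimes 1_\infty$ one has $L(a)=(\omega\otimes\id_{M_n^{\otimes(k-1)}})(a_1\otimes\cdots\otimes a_k)$, reindexed down one factor. Because appending a $1$ in the last slot commutes with slicing the first slot, this is compatible with the bonding maps $\phi_k$, so $L$ is well defined on the dense $*$-subalgebra $\bigcup_k M_n^{\otimes k}\otimes 1_\infty$. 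Each $\omega\otimes\id$ is unital and completely positive (a state tensored with the identity), hence contractive, so $L$ extends to a unital positive contraction on $A$. As $A$ is unital we have $M(A)=A$, $\overline L=L$, $\overline L(1)=\omega(1)\cdot 1=1$, and $\alpha$ is trivially extendible; so the only remaining point for ``$(A,\alpha,L)$ is an Exel system with transfer operator $L$'' is the identity \eqref{transferoponmofa}.

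For the transfer identity it suffices, by linearity and continuity, to check $L(\alpha(a)b)=aL(b)$ on elementary tensors; padding with $1$'s I may take $a=a_1\otimes\cdots\otimes a_k\otimes1_\infty$ and $b=b_1\otimes\cdots\otimes b_k\otimes1_\infty$. Using \eqref{UHFaction} and multiplying at level $k+1$, the element $\alpha(a)b$ has first slot $pb_1$ and subsequent slots $a_1b_2,\dots,a_{k-1}b_k,a_k$; applying $L$ and using $\tr\big(p(pb_1)p\big)=\omega(b_1)$ gives $\omega(b_1)\,(a_1b_2\otimes\cdots\otimes a_{k-1}b_k\otimes a_k\otimes1_\infty)$. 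A parallel computation shows $aL(b)$ equals the same element, which is the required identity.

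The main work, and the genuine obstacle, is almost faithfulness, precisely because $L$ itself is \emph{not} faithful once $N<n$ (for instance $L\big((e_{nn}\otimes1_\infty)^*(e_{nn}\otimes1_\infty)\big)=\omega(e_{nn})=0$). I would therefore exploit the freedom in $b$ in the condition ``$L(b^*a^*ab)=0$ for all $b$'', which by \eqref{almostfaithful} says exactly $\phi(a)=0$. The key computation is a slice-map decomposition: writing $\omega=\sum_{i=1}^N\omega_{\xi_i}$ with $\xi_i=N^{-1/2}e_i$, and identifying $A\cong M_n\otimes B=M_n(B)$ (first factor versus the copy $B\cong A$ of the remaining factors, faithfully represented on some $\mathcal H_B$), the bounded maps $W_i\colon\mathcal H_B\to\C^n\otimes\mathcal H_B$, $W_i\eta=\xi_i\otimes\eta$, satisfy $(\omega\otimes\id)(x)=\sum_{i=1}^N W_i^*xW_i$ on all of $M_n(B)$. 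Hence for every $c\in A$,
\[
(\omega\otimes\id)(c^*c)=\sum_{i=1}^N (cW_i)^*(cW_i).
\]
Since $L$ is this slice followed by the injective down-shift isomorphism $B\cong A$, $L(c^*c)=0$ iff the right-hand side vanishes, iff $cW_i=0$ for all $i\le N$, iff $c\,(p\otimes1_\infty)=0$ (the ranges of the $W_i$ span $\operatorname{ran}(p)\otimes\mathcal H_B$). Applying this with $c=ab$, the hypothesis becomes $a\,A\,(p\otimes1_\infty)=0$. As $A=\UHF(n^\infty)$ is simple and $p\otimes1_\infty\neq0$, the closed ideal it generates is all of $A$, so $a\,A\,(p\otimes1_\infty)\,A=0$ forces $aA=0$ and hence $a=0$. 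I expect the displayed slice-map identity to be the crux: once it is in place, the reduction of $L(c^*c)=0$ to the support condition $c(p\otimes1_\infty)=0$ and the appeal to simplicity make almost faithfulness on $A$ (a fortiori on $\phi^{-1}(\K(M_L))$) immediate.
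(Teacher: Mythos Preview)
Your construction of $L$ and verification of the transfer identity are correct and essentially coincide with the paper's: both build $L$ from level maps $L_k$ (equivalently, from the slice by the state $\omega(x)=\tr(pxp)$), check compatibility with the bonding maps, and verify $L(\alpha(a)b)=aL(b)$ on elementary tensors.

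For almost faithfulness, your route is genuinely different from the paper's. The paper embeds $A$ in $\O_n$ via Lemma~\ref{UHF=core}, takes the path-space representation $\pi_S$ on $\ell^2(\{1,\dots,n\}^\infty)$, and proves the concrete formula
\[
\big(\pi_S(L(a))\delta_x\,\big|\,\delta_y\big)=N^{-1}\sum_{i=1}^N\big(\pi_S(a)\delta_{ix}\,\big|\,\delta_{iy}\big);
\]
it then plugs in the specific elements $b=e_{ji}\otimes1_\infty$ to conclude $\pi_S(a)\delta_y=0$ for all $y$, hence $a=0$. Your argument instead identifies $A\cong M_n\otimes B$ with $B\cong A$, writes the slice $\omega\otimes\id$ as $\sum_{i\le N}W_i^*(\cdot)W_i$, reduces $L(c^*c)=0$ to $c(p\otimes1_\infty)=0$, and then invokes the simplicity of $\UHF(n^\infty)$ to pass from $aA(p\otimes1_\infty)=0$ to $a=0$. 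Both arguments are valid; yours is more structural and avoids the detour through $\O_n$ and its path representation, at the modest cost of citing simplicity of the UHF algebra, while the paper's version is entirely self-contained and explicit about which test elements $b$ are needed. A minor remark: your phrase ``$1\le N$'' after ``$\omega(1)=\tr(p)=1$'' is irrelevant---$\tr$ is normalised on $pM_np$, so $\omega(1)=\tr(p)=1$ holds for every $1\le N\le n$ without further comment.
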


\begin{proof}
Define $L_k:M_n^{\otimes(k+1)}\to M_n^{\otimes k}$ by $L_k(a_1\otimes\cdots\otimes a_{k+1})=\tr(pa_1p)(a_2\otimes\cdots\otimes a_{k+1})$. A calculation shows that $\phi_k\circ L_k=L_{k+1}\circ\phi_{k+1}$, so the $L_k$ combine to give a linear function $L$ on the dense subalgebra $\bigcup_{k} M_n^{\otimes k}$ of $A$ satisfying \eqref{deftransfer}. Each $L_k$ is positive and linear with $L_k(1)=1$, and hence is norm-decreasing; since the inclusions of the $M_n^{\otimes k}$ in $A$ are norm-preserving, the map $L$ is norm-decreasing on $\bigcup_{k} M_n^{\otimes k}$, and hence extends uniquely to a norm-decreasing linear map $L:A\to A$. It is positive because each $L_k$ is.

To see that $L$ is a transfer operator, we take $a,b\in M_n^{\otimes k}\subset A$ and calculate:
\begin{align*}
L(\alpha(a)b)&=L(pb_1\otimes a_1b_2\otimes\cdots\otimes a_{k-1}b_{k}\otimes a_{k}\otimes 1_\infty)\\
&=\tr(pb_1)(a_1b_2\otimes \cdots\otimes a_{k-1}b_{k}\otimes a_{k}\otimes 1_\infty)\\
&=\tr(pb_1p)(a_1b_2\otimes \cdots\otimes a_{k-1}b_{k}\otimes a_{k}\otimes 1_\infty)\\
&=(a_1\otimes \cdots\otimes a_{k}\otimes 1_\infty)\tr(pb_1p)(b_2\otimes \cdots\otimes b_{k}\otimes 1_\infty)\\
&=aL(b).
\end{align*}

Next we use Lemma~\ref{UHF=core} to view $A$ as a subalgebra of $\O_n$, set $X:=\{1,\dots,n\}^\infty$, and let $\pi_S:\O_n\to B(\ell^2(X))$ be the infinite path representation. Let $\{\delta_{x}:x\in X\}$ be the usual basis for $\ell^2(X)$. Then we claim that
\begin{equation}\label{formforL}
\big(\pi_S(L(a))\delta_x\,|\,\delta_y\big)=N^{-1}\sum_{i=1}^N\big(\pi_S(a)\delta_{ix}\,|\,\delta_{iy}\big)\quad\text{for $a\in A$ and $x,y\in X$.}
\end{equation}
It suffices by linearity and continuity to check this for $a=e_{\mu\nu}\otimes 1_\infty$. Then with $\mu=\mu_1\mu'$ and $\nu=\nu_1\nu'$, 
\begin{align*}
\big(\pi_S(L(a))\delta_x\,|\,\delta_y\big)
=\big(\pi_S(\tr(pe_{\mu_1\nu_1}p)e_{\mu'\nu'}\otimes 1_\infty)\delta_x\,|\,\delta_y\big)
\end{align*}
vanishes unless $\mu_1=\nu_1\leq N$, and if so, 
\begin{equation*}
\big(\pi_S(L(a))\delta_x\,|\,\delta_y\big)=N^{-1}\big(S_{\mu'}S_{\nu'}^*\delta_x\,|\,\delta_y\big).
\end{equation*}
The sum on the right-hand side of \eqref{formforL} is
\[
N^{-1}\sum_{i=1}^N \big(S_\mu S_\nu^*\delta_{ix}\,|\,\delta_{iy}\big)=N^{-1}\sum_{i=1}^N \big(S_\nu^*\delta_{ix}\,|\,S_\mu^*\delta_{iy}\big);
\]
the $i$th summand vanishes unless $\nu_1=i=\mu_1$, which in particular forces $\nu_1=\mu_1\leq N$; then $S_{\nu}^*\delta_{ix}=S_{\nu'}^*\delta_x$, so the right-hand side of \eqref{formforL} also reduces to $N^{-1}(S_{\mu'}S_{\nu'}^*\delta_x\,|\,\delta_y)$, and we have proved \eqref{formforL}.

To see that $L$ is almost faithful, suppose that $a\in A$ has the property that $L((ab)^*ab)=0$ for all $b\in A$. Then, with $\pi_S$ as in the previous paragraph, we have
\begin{align*}
\big(\pi_S(L((ab)^*ab))\delta_{x}\,&|\,\delta_{x}\big)=0\text{ for all $b\in A$, $x\in X$}\\
&\Longrightarrow \sum_{i=1}^N \big(\pi_S((ab)^*ab)\delta_{ix}\,|\,\delta_{ix}\big)=0\text{ for all $b\in A$, $x\in X$}\\
&\Longrightarrow \sum_{i=1}^N \big(\pi_S(ab)\delta_{ix}\,|\,\pi_S(ab)\delta_{ix}\big)=0\text{ for all $b\in A$, $x\in X$}\\
&\Longrightarrow \sum_{i=1}^N \|\pi_S(ab)\delta_{ix}\|^2=0\text{ for all $b\in A$, $x\in X$}\\
&\Longrightarrow \|\pi_S(ab)\delta_{ix}\|^2=0\text{ for all $i\leq N$, $b\in A$, $x\in X$.}
\end{align*}
We can in particular take $b=e_{ji}\otimes 1_\infty$ for any $j\leq n$, and deduce that
\[
0=\|\pi_S(ab)\delta_{ix}\|=\|\pi_S(a)S_jS_i^*\delta_{ix}\|=\|\pi_S(a)\delta_{jx}\|
\]
for all $j\leq n$ and $x\in X$; since $\{\delta_{jx}:j\leq n,\;x\in X\}=\{\delta_y:y\in X\}$, we deduce that $\pi_S(a)=0$ and $a=0$. Thus $L$ is almost faithful.
\end{proof}

Now we have an Exel system $(A,\alpha, L)$, and it is natural to ask what the Exel crossed product $A\rtimes_{\alpha,L}\N$ is. 

\begin{lemma}\label{Eijon}
The elements 
\[
\big\{E_{ij}:=N^{1/2}q(e_{ij}\otimes 1_\infty):1\leq i\leq n,\;1\leq j\leq N\big\}
\]
form an orthonormal basis for the right Hilbert $A$-module $M_L$.
\end{lemma}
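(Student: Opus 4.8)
The plan is to check the two defining properties of an orthonormal basis for the right Hilbert $A$--module $M_L$, using that $A$ is unital: first orthonormality, $\langle E_{ij}, E_{kl}\rangle = \delta_{ik}\delta_{jl}1_A$; and then the reconstruction identity $m = \sum_{i,j}E_{ij}\cdot\langle E_{ij}, m\rangle$ for every $m\in M_L$. Throughout I would use that $\langle q(a), q(b)\rangle = L(a^*b)$, that the right action is $q(a)\cdot b = q(a\alpha(b))$, and the explicit formulas \eqref{UHFaction} and \eqref{deftransfer} for $\alpha$ and $L$.

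For orthonormality, since $(e_{ij}\otimes 1_\infty)^* = e_{ji}\otimes 1_\infty$ I would compute
\begin{align*}
\langle E_{ij}, E_{kl}\rangle
&= N\,L\big((e_{ji}\otimes 1_\infty)(e_{kl}\otimes 1_\infty)\big)
= N\,\delta_{ik}\,L(e_{jl}\otimes 1_\infty)
= N\,\delta_{ik}\,\tr(pe_{jl}p)\,1_A.
\end{align*}
Both second indices satisfy $j,l\leq N$, so $pe_{jl}p = e_{jl}$ in $pM_np\cong M_N(\C)$ and $\tr(pe_{jl}p) = N^{-1}\delta_{jl}$; the factor $N$ cancels, leaving $\delta_{ik}\delta_{jl}1_A$. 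In particular each $E_{ij}$ is a unit vector, so none of them is zero.

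For the reconstruction identity, note that $q$ is norm-decreasing and the span of the $e_{\mu\nu}\otimes 1_\infty$ is dense in $A$, so the vectors $q(e_{\mu\nu}\otimes 1_\infty)$ are total in $M_L$; since $m\mapsto\sum_{i,j}E_{ij}\cdot\langle E_{ij}, m\rangle$ is a finite sum of the bounded operators $\Theta_{E_{ij},E_{ij}}$, it is enough to verify the identity on these generators. Writing $a = e_{\mu\nu}\otimes 1_\infty$ with $\mu = \mu_1\mu'$ and $\nu = \nu_1\nu'$, I would unwind
\[
E_{ij}\cdot\langle E_{ij}, q(a)\rangle
= N\,q\big((e_{ij}\otimes 1_\infty)\,\alpha\big(L((e_{ji}\otimes 1_\infty)a)\big)\big),
\]
evaluate $L((e_{ji}\otimes 1_\infty)a) = \delta_{i\mu_1}N^{-1}\delta_{j\nu_1}(e_{\mu'\nu'}\otimes 1_\infty)$ (the coefficient being zero unless $\nu_1\leq N$) via \eqref{deftransfer}, then apply \eqref{UHFaction} together with $e_{ij}p = e_{ij}$ (valid because $j\leq N$) to collapse the double sum onto $i=\mu_1$, $j=\nu_1$. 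The outcome is $q(a)$ when $\nu_1\leq N$ and $0$ when $\nu_1 > N$.

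The only delicate point is recognising that this is the correct answer in both regimes. A parallel computation gives $\langle q(a), q(a)\rangle = \tr(pe_{\nu_1\nu_1}p)(e_{\nu'\nu'}\otimes 1_\infty)$, which vanishes exactly when $\nu_1 > N$; hence $q(a) = 0$ in $M_L$ precisely in the case where the sum collapses to $0$, and the identity holds trivially there. Extending by linearity and continuity yields reconstruction on all of $M_L$, so $\{E_{ij}\}$ is an orthonormal basis. The main obstacle is therefore bookkeeping rather than structure: one must keep track of when a generator is genuinely nonzero in $M_L$, which is exactly what dictates the index range $j\leq N$ and renders the $\nu_1>N$ case harmless.
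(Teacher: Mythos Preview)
Your argument is correct and rests on the same two ingredients as the paper's proof: the orthonormality computation $\langle E_{ij},E_{kl}\rangle=N\tr(pe_{ji}e_{kl}p)1_A=\delta_{ik}\delta_{jl}1_A$, and the key vanishing $q(a_1\otimes\cdots)=0$ whenever the first tensor factor is killed by right multiplication by $p$. The only organisational difference is that the paper uses the latter to write an arbitrary $q(a_1\otimes\cdots\otimes a_k\otimes 1_\infty)$ explicitly as $\sum_{i\leq n,\,j\leq N}c_{ij}\,q(e_{ij}\otimes 1_\infty)\cdot(a_2\otimes\cdots\otimes a_k\otimes 1_\infty)$ (proving spanning directly), whereas you verify the reconstruction identity $\sum\Theta_{E_{ij},E_{ij}}=\id$ on the matrix-unit generators and invoke the vanishing to dispose of the $\nu_1>N$ case; these are equivalent ways of packaging the same computation.
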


\begin{proof}
The Hilbert module $(M_L)_A$ is the completion of $A$ in the seminorm defined by the inner product $\langle a\,,\,b\rangle:=L(a^*b)$. Write $a_1\in M_n$ as $\sum_{i,j=1}^n c_{ij}e_{ij}$ for some $c_{ij}\in\C$. A calculation shows that $\|(a_1-a_1p)\otimes a_2\otimes\cdots\otimes a_k\otimes 1_\infty\|^2=0$. Since also $e_{ij}p=0$ for $j>N$ we have
\begin{align*}
q(a_1\otimes\cdots\otimes a_k\otimes 1_\infty)
&=q(a_1p\otimes a_2\otimes\cdots\otimes a_k\otimes 1_\infty)\\
&=\sum_{i=1}^n\sum_{j=1}^N c_{ij} q(e_{ij}\otimes a_2\otimes\cdots\otimes a_k\otimes 1_\infty)\\
&=\sum_{i=1}^n\sum_{j=1}^N c_{ij} q(e_{ij}\otimes 1_\infty)\cdot (a_2\otimes\cdots\otimes a_k\otimes 1_\infty).
\end{align*}
Thus the $E_{ij}$ generate $M_L$ as a right Hilbert $A$-module.
To see that $\{E_{ij}\}$ is orthonormal, we compute $\langle E_{ij},E_{kl}\rangle=N\tr(pe_{ij}^*e_{kl}p)1_A$: the product $e_{ij}^*e_{kl}$ vanishes unless $i=k$, and then $\langle E_{ij},E_{il}\rangle =1_A$ when $j=l$ and $0$ otherwise.
\end{proof}

\begin{lemma}\label{fromTtopi}
Suppose $\{T_{ij}:1\leq i\leq n,\;1\leq j\leq N\}$ is a Cuntz family of isometries in a $C^*$-algebra $C$. Then there is a unital homomorphism $\pi_T:A\to C$ such that \begin{equation}\label{charreps}
T_{ij}^*\pi_T(a)T_{kl}=\pi_T(\langle E_{ij},a\cdot E_{kl}\rangle)\quad\text{for all $a\in A$ and $ij, kl$.}
\end{equation}
\end{lemma}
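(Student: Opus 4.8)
The plan is to compute the coefficients $\langle E_{ij}\,,\,a\cdot E_{kl}\rangle$ explicitly on the spanning elementary tensors, read off the recursion that \eqref{charreps} forces on $\pi_T$, and then \emph{construct} $\pi_T$ as a direct limit of homomorphisms built from the Cuntz family. First I would compute, for $a=a_1\otimes\cdots\otimes a_m\otimes 1_\infty$ with $m\geq 1$, that
\[
\langle E_{ij}\,,\,a\cdot E_{kl}\rangle=\delta_{jl}\,(a_1)_{ik}\,(a_2\otimes\cdots\otimes a_m\otimes 1_\infty),
\]
where $(a_1)_{ik}$ is the $(i,k)$ matrix entry of $a_1\in M_n$. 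This uses only $\langle q(x),q(y)\rangle=L(x^*y)$, the left action $a\cdot q(x)=q(ax)$, the normalising factor $N^{1/2}$ in $E_{ij}$, the identities $pe_{ji}=e_{ji}$ and $e_{kl}p=e_{kl}$ (valid since $j,l\leq N$), and $\tr(e_{jl})=N^{-1}\delta_{jl}$ on $pM_np\cong M_N$. The upshot is that $a\mapsto\langle E_{ij},a\cdot E_{kl}\rangle$ strips off the leading tensor factor, so \eqref{charreps} should be equivalent to a recursion of the form $\pi_T(a_1\otimes b)=\rho(a_1)\,\beta_T(\pi_T(b))$ for the two building blocks introduced next.

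Next I would produce those building blocks from the Cuntz family. Define a map $\rho:M_n\to C$ by $\rho(x)=\sum_{i,k=1}^n\sum_{j=1}^N x_{ik}\,T_{ij}T_{kj}^*$; the matrix-unit relations $T_{ij}^*T_{kl}=\delta_{ik}\delta_{jl}$ and the completeness relation $\sum_{i,j}T_{ij}T_{ij}^*=1$ make $\rho$ a unital $*$-homomorphism. The same relations show that $\beta_T(c):=\sum_{i,j}T_{ij}cT_{ij}^*$ is a unital $*$-endomorphism of $C$. A short calculation with the Cuntz relations gives $\rho(x)\beta_T(c)=\sum_{i,k,j}x_{ik}T_{ij}cT_{kj}^*=\beta_T(c)\rho(x)$, so the copies $\rho(M_n)$ and $\beta_T(C)$ commute; applying $\beta_T^{l}$, the unital homomorphisms $\rho_l:=\beta_T^{l}\circ\rho:M_n\to C$ ($l\geq 0$) have pairwise commuting ranges.

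Then I would assemble $\pi_T$. Because the $\rho_l$ are unital with commuting ranges, sending the matrix units $e_{\mu\nu}=e_{\mu_1\nu_1}\otimes\cdots\otimes e_{\mu_k\nu_k}$ of $M_n^{\otimes k}$ to $\rho_0(e_{\mu_1\nu_1})\cdots\rho_{k-1}(e_{\mu_k\nu_k})$ produces a system of matrix units in $C$, and hence defines a unital homomorphism $\pi_T^{(k)}:M_n^{\otimes k}\to C$ with $\pi_T^{(k)}(a_1\otimes\cdots\otimes a_k)=\rho(a_1)\beta_T(\rho(a_2))\cdots\beta_T^{k-1}(\rho(a_k))$. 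Since $\rho(1)=1$ and $\beta_T$ is unital, $\pi_T^{(k+1)}\circ\phi_k=\pi_T^{(k)}$, so these are compatible with the bonding maps and induce a unital homomorphism $\pi_T:A=\varinjlim M_n^{\otimes k}\to C$. To verify \eqref{charreps} I would write $a=a_1\otimes b$ with $b=a_2\otimes\cdots\otimes a_m\otimes 1_\infty$, so that $\pi_T(a)=\rho(a_1)\beta_T(\pi_T(b))=\sum_{i',k',j'}(a_1)_{i'k'}T_{i'j'}\pi_T(b)T_{k'j'}^*$; the relations $T_{ij}^*T_{i'j'}=\delta_{ii'}\delta_{jj'}$ then collapse $T_{ij}^*\pi_T(a)T_{kl}$ to $\delta_{jl}(a_1)_{ik}\pi_T(b)$, which is exactly $\pi_T(\langle E_{ij},a\cdot E_{kl}\rangle)$ by the first step (the case $m=0$ being just orthonormality of the $E_{ij}$ together with $\pi_T(1)=1$). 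As both sides are norm-continuous in $a$ and the elementary tensors are dense, the relation holds for all $a\in A$.

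The crux is the commutation $\rho(M_n)\beta_T(C)=\beta_T(C)\rho(M_n)$, which is what upgrades the $\rho_l$ to genuinely commuting copies of $M_n$ and thereby makes $\pi_T$ a well-defined homomorphism on the direct limit; this is precisely where completeness of the Cuntz family (not just orthonormality) is used. Once that commutation is established, the compatibility with the bonding maps and the verification of \eqref{charreps} are purely formal.
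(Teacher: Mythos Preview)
Your proof is correct and ultimately builds the same homomorphism as the paper: your
$\pi_T^{(k)}(e_{\mu\nu})=\rho_0(e_{\mu_1\nu_1})\cdots\rho_{k-1}(e_{\mu_k\nu_k})$
unwinds, via the Cuntz relations, to the paper's explicit matrix units
$F_{\mu\nu}=\sum_{\lambda\in\{1,\dots,N\}^k}T_{\mu\lambda}T_{\nu\lambda}^*$.
The organisation, however, is different. The paper writes down the $F_{\mu\nu}$, checks by hand that they form a complete set of matrix units, verifies the bonding compatibility $\pi_{k+1}(a\otimes 1)=\pi_k(a)$, and then establishes \eqref{charreps} by a direct case computation on $a=e_{\mu\nu}\otimes 1_\infty$. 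You instead isolate two building blocks, the unital representation $\rho:M_n\to C$ and the canonical endomorphism $\beta_T$, prove the single commutation relation $\rho(M_n)\subset\beta_T(C)'$, and derive everything structurally from that; in particular your recursion $\pi_T(a_1\otimes b)=\rho(a_1)\,\beta_T(\pi_T(b))$ together with the preliminary computation $\langle E_{ij},a\cdot E_{kl}\rangle=\delta_{jl}(a_1)_{ik}\,(a_2\otimes\cdots\otimes a_m\otimes 1_\infty)$ reduces \eqref{charreps} to a one-line collapse of the Cuntz relations. Your route is more conceptual and avoids the lengthy matrix-unit bookkeeping; the paper's route is more elementary in that it never names $\beta_T$ or invokes commuting copies of $M_n$, and its verification of \eqref{charreps} is entirely explicit.
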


\begin{proof}
For $\mu,\nu\in \{1,\dots,n\}^k$ we define $T_{\mu\nu}:=T_{\mu_1\nu_1}\cdots T_{\mu_k\nu_k}$. We claim that
\[
\Big\{F_{\mu\nu}:=\sum_{\lambda\in\{1,\dots,N\}^k} T_{\mu\lambda}T_{\nu\lambda}^*:\mu,\nu\in \{1,\dots,n\}^k\Big\}
\]
is a family of nonzero matrix units in $C$ such that $\sum_{\mu\in \{1,\dots,n\}^k}F_{\mu\mu}=1$. Since $\{T_{ij}\}$ is a Cuntz $nN$-family, $\{T_{\mu\lambda}\}$ is a Cuntz $(nN)^k$-family, and hence $\sum_{\mu\in \{1,\dots,n\}^k}F_{\mu\mu}=1$. Since the $T_{\mu\lambda}$ are isometries with orthogonal ranges, the $F_{\mu\nu}$ are nonzero, and 
\[
F_{\mu\nu}F_{\sigma\tau} =\sum_{\lambda,\omega\in\{1,\dots,N\}^k} T_{\mu\lambda}T_{\nu\lambda}^*T_{\sigma\omega}T_{\tau\omega}^*
\]
is either $F_{\mu\tau}$ or $0$. 

So for each $k$, there is a homomorphism $\pi_k:M_n^{\otimes k}\to C$ such that $\pi_k(e_{\mu\nu})=F_{\mu\nu}$. For $a=e_{\mu\nu}\in M_n^{\otimes k}$, we have
\begin{align*}
\pi_{k+1}(a\otimes 1)&=\sum_{i=1}^n\pi_{k+1}(e_{\mu\nu}\otimes e_{ii})=\sum_{i=1}^n\pi_{k+1}(e_{(\mu i)(\nu i)})\\
&=\sum_{i=1}^n\sum_{\lambda\in\{1,\dots,N\}^{k+1}} T_{(\mu i)\lambda}T_{(\nu i)\lambda}^*\\
&=\sum_{\lambda'\in\{1,\dots,N\}^k}\;\sum_{i=1}^n\sum_{j=1}^N T_{(\mu i)(\lambda' j)}T_{(\nu i)(\lambda' j)}^*\\
&=\sum_{\lambda'\in\{1,\dots,N\}^k}T_{\mu\lambda'}\Big(\sum_{i=1}^n\sum_{j=1}^N T_{ij}T_{ij}^*\Big)T_{\nu \lambda'}^*\\
&=\sum_{\lambda'\in\{1,\dots,N\}^k}T_{\mu\lambda'}T_{\nu \lambda'}^*=\pi_k(e_{\mu\nu})=\pi_k(a).
\end{align*}
Thus we have a well-defined unital homomorphism $\pi_T:A\to C$ such that 
\[
\pi_T(e_{\mu\nu}\otimes 1_\infty)=\sum_{\lambda\in\{1,\dots,N\}^k} T_{\mu\lambda}T_{\nu\lambda}^*,
\]
and it remains for us to check that $(\pi_T,\{T_{ij}\})$ satisfy \eqref{charreps}. It suffices to check this on $a$ of the form $e_{\mu\nu}\otimes 1_\infty$.

So suppose $e_{\mu\nu}\in M_n^{\otimes (k+1)}$. Then
\[
T_{ij}^*\pi_T(e_{\mu\nu}\otimes1_\infty)T_{kl}
=T_{ij}^*\Big(\sum_{\lambda\in\{1,\dots,N\}^{k+1}}T_{\mu_1\lambda_1}\cdots T_{\mu_{k+1}\lambda_{k+1}}T_{\nu_{k+1}\lambda_{k+1}}^*\cdots T_{\nu_1\lambda_1}^*\Big)T_{kl}.
\]
The $\lambda$-summand vanishes unless $ij=\mu_1\lambda_1$ and $\nu_1\lambda_1=kl$, so all terms vanish unless $j=l$, $i=\mu_1$ and $k=\nu_1$. Thus if we again  write $\mu=\mu_1\mu'$, etc, then
\begin{equation}\label{LHS=}
T_{ij}^*\pi_T(e_{\mu\nu}\otimes1_\infty)T_{kl}
=\begin{cases}\sum_{\lambda'\in\{1,\dots,N\}^{k}}T_{\mu'\lambda'}T_{\nu'\lambda'}^*&\text{if $j=l$, $i=\mu_1$ and $k=\nu_1$}\\
0&\text{otherwise.} 
\end{cases}
\end{equation}
On the other hand, we have
\begin{align*}
\pi_T\big(\langle E_{ij},e_{\mu\nu}\cdot E_{kl}\rangle\big)
&=N^{1/2}\pi_T\big(\langle E_{ij},(e_{\mu_1\nu_1}e_{kl})\otimes e_{\mu'\nu'}\otimes 1_\infty\rangle\big)\\
&=\begin{cases}0&\text{unless $k=\nu_1$}\\
N^{1/2}\pi_T\big(\langle E_{ij},e_{\mu_1 l}\otimes e_{\mu'\nu'}\otimes 1_\infty\rangle\big)&\text{if $k=\nu_1$}
\end{cases}\\
&=\begin{cases}0&\text{unless $k=\nu_1$}\\
N\pi_T\big(\tr(pe_{ij}^*e_{\mu_1l}p)(e_{\mu'\nu'}\otimes 1_\infty)\big)&\text{if $k=\nu_1$}
\end{cases}\\
&=\begin{cases}0&\text{unless $k=\nu_1$, $i=\mu_1$ and $j=l$}\\
N\pi_T\big(N^{-1}(e_{\mu'\nu'}\otimes 1_\infty)\big)&\text{if $k=\nu_1$, $i=\mu_1$ and $j=l$,}
\end{cases}
\end{align*}
which is the same as \eqref{LHS=}.
\end{proof}

To finish our analysis of $A\rtimes_{\alpha, L}\N$ we need a general lemma.  First, recall that a finite set $\{F_i:1\leq i\leq k\}$ in a right Hilbert $B$-module $M$ is a \emph{Parseval frame} if
\begin{equation}\label{defParseval}
m=\sum_{i=1}^k F_i\cdot\langle F_i, m\rangle_B\ \text{ for every $m\in M$}.
\end{equation}

\begin{lemma}\label{lem-universal}
Let $(B,\beta,K)$ be an Exel system with $B$ unital. Suppose that $\{F_i\}_{i=1}^k$ is a Parseval frame for  $M_K$, that $\pi:B\to C$ is a unital homomorphism, and that $\{S_{i}:1\leq i\leq k\}$ is a Cuntz family of isometries in $C$ such that
\begin{equation}\label{conditiononpi} 
S_{i}^*\pi(b)S_{j}=\pi(\langle F_{i},b\cdot F_{j}\rangle_K)\quad\text{for all $b\in B$ and $1\leq i, j\leq k$.}
\end{equation}
Define $\psi:M_K\to C$ by 
\[
\psi(m)=\sum_{i=1}^kS_{i}\pi(\langle F_{i},m\rangle_K).
\]
Then $(\psi,\pi)$ is a Cuntz-Pimsner covariant representation of $M_K$ in $C$ such that $\psi(F_i)=S_i$ for all $i$.
\end{lemma}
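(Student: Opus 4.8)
The plan is to verify directly that $(\psi,\pi)$ satisfies the three defining relations of a representation of $M_K$, then the identity $\psi(F_i)=S_i$, and finally Cuntz-Pimsner covariance. Throughout, the two inputs that do the work are the Parseval reconstruction formula \eqref{defParseval} and the covariance hypothesis \eqref{conditiononpi}, used alongside the defining relations of a Cuntz family: since $\pi$ is unital, $C$ is unital, and we have $S_i^*S_j=\delta_{ij}1$ together with $\sum_{i=1}^k S_iS_i^*=1$.

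First I would dispatch the two easy relations. Linearity of $\psi$ and the right-module identity $\psi(m\cdot b)=\psi(m)\pi(b)$ follow at once from linearity of the inner product in its second variable and from $\langle F_i,m\cdot b\rangle_K=\langle F_i,m\rangle_K b$. For the inner-product relation $\psi(m)^*\psi(n)=\pi(\langle m,n\rangle_K)$, I would expand the double sum defining $\psi(m)^*\psi(n)$, use $S_i^*S_j=\delta_{ij}1$ to collapse it to $\sum_i\pi(\langle m,F_i\rangle_K)\pi(\langle F_i,n\rangle_K)$, and then invoke \eqref{defParseval} in the form $\sum_i\langle m,F_i\rangle_K\langle F_i,n\rangle_K=\langle m,\sum_i F_i\cdot\langle F_i,n\rangle_K\rangle_K=\langle m,n\rangle_K$.

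The step I expect to be the crux is the remaining left-module relation $\psi(b\cdot m)=\pi(b)\psi(m)$, since it is the only place where \eqref{conditiononpi} and the Cuntz relation $\sum_i S_iS_i^*=1$ are both genuinely needed. The key move is to rewrite $\pi(b)S_j=\sum_i S_iS_i^*\pi(b)S_j=\sum_i S_i\pi(\langle F_i,b\cdot F_j\rangle_K)$ using \eqref{conditiononpi}; substituting this into $\pi(b)\psi(m)=\sum_j\pi(b)S_j\pi(\langle F_j,m\rangle_K)$ and collapsing the sum over $j$ with \eqref{defParseval} (via $\sum_j\langle F_i,b\cdot F_j\rangle_K\langle F_j,m\rangle_K=\langle F_i,b\cdot\sum_j F_j\langle F_j,m\rangle_K\rangle_K=\langle F_i,b\cdot m\rangle_K$) yields exactly $\sum_i S_i\pi(\langle F_i,b\cdot m\rangle_K)=\psi(b\cdot m)$.

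It then remains to record $\psi(F_i)=S_i$ and Cuntz-Pimsner covariance. Taking $b=1$ in \eqref{conditiononpi} and using that $\pi$ is unital gives $\pi(\langle F_j,F_i\rangle_K)=S_j^*S_i=\delta_{ji}1$, whence $\psi(F_i)=\sum_j S_j\pi(\langle F_j,F_i\rangle_K)=S_i$. For covariance I would observe that \eqref{defParseval} forces $\sum_{i=1}^k\Theta_{F_i,F_i}=1_{M_K}$, a finite-rank operator, so that $\phi(b)=\phi(b)\sum_i\Theta_{F_i,F_i}=\sum_i\Theta_{b\cdot F_i,F_i}$ lies in $\K(M_K)$ for every $b\in B$; hence $\phi^{-1}(\K(M_K))=B$ and Cuntz-Pimsner covariance must be checked on all of $B$. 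Using the left-module relation just proved, $\psi(F_i)=S_i$, and $\sum_i S_iS_i^*=1$, we compute $(\psi,\pi)^{(1)}(\phi(b))=\sum_i\psi(b\cdot F_i)\psi(F_i)^*=\sum_i\pi(b)S_iS_i^*=\pi(b)$, so $(\psi,\pi)$ is coisometric on $B=\phi^{-1}(\K(M_K))$ and the lemma follows.
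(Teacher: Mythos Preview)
Your proof is correct and follows essentially the same route as the paper's: verify the three bimodule-representation identities (with the Parseval formula and \eqref{conditiononpi} doing the work), then $\psi(F_i)=S_i$, then Cuntz-Pimsner covariance via $\phi(b)=\sum_i\Theta_{b\cdot F_i,F_i}$. The only cosmetic differences are that you use $S_i^*S_j=\delta_{ij}1$ directly in the inner-product step (the paper instead substitutes $S_i^*S_j=\pi(\langle F_i,F_j\rangle_K)$ from \eqref{conditiononpi} and applies Parseval twice), and you run the left-module computation from $\pi(b)\psi(m)$ rather than from $\psi(b\cdot m)$.
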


\begin{proof}
Let $m,n\in M_K$ and $b\in B$. We compute:
\[
\psi(m\cdot b)=\sum_{i=1}^k S_i\pi(\langle F_i\,,\, m\cdot b\rangle_L)=\sum_{i=1}^k S_i\pi(\langle F_i\,,\, m\rangle_Kb)=\psi(m)\pi(b),
\]
\begin{align*}
\psi(m)^*\psi(n)&=\sum_{i,j=1}^k \pi(\langle F_i\,,\, m\rangle_K)^*S_i^*S_j\pi(\langle F_j\,,\, n\rangle_K)\\
&=\sum_{i,j=1}^k \pi(\langle F_i\,,\, m\rangle_K)^*\pi(\langle F_i\,,\,F_j\rangle_K)\pi(\langle F_j\,,\, n\rangle_K)\\
&=\sum_{i,j=1}^k\pi\big(\big\langle F_i\cdot\langle F_i\,,\, m\rangle_K\,,\,F_j\cdot \langle F_j\,,\, n\rangle_K\big\rangle_K\big)\\
&=\pi(\langle m\,,\, n\rangle_K),
\end{align*}
and 
\begin{align*}
\psi(b\cdot m)&=\sum_{i=1}^k S_i\pi(\langle F_i\,,\, b\cdot m\rangle_K)
=\sum_{i=1}^k S_i\pi\Big( \Big\langle F_i\,,\, b\cdot\sum_{j=1}^n F_j\cdot\langle F_j\,,\, m\rangle_K\Big\rangle_K \Big)\\
&=\sum_{i,j=1}^k S_i\pi(\langle F_i\,,\, b\cdot F_j\rangle_K\langle F_j\,,\, m\rangle_K)
=\sum_{i,j=1}^k S_iS_i^*\pi(b)S_j\pi(\langle F_j\,,\, m\rangle_K)\\
&=\Big(\sum_{i=1}^kS_iS_i^*\Big)\Big(\sum_{j=1}^n\pi(b)S_j\pi(\langle F_j\,,\, m\rangle_K)\Big)=\pi(b)\psi(m).
\end{align*}
Thus $(\psi,\pi)$ is a  representation of $M_K$. Next we check the Cuntz-Pimsner covariance of $(\psi,\pi)$. Using the reconstruction formula \eqref{defParseval} we see that
\begin{equation*}\label{LHactionFR}
\phi(b)m=\sum_{i=1}^k b\cdot(F_i\cdot\langle F_i\,,\, m\rangle_K)=\sum_{i=1}^k (b\cdot F_i)\cdot \langle F_i\,,\, m\rangle_K)=\sum_{i=1}^k\Theta_{b\cdot F_i,F_i}(m).
\end{equation*}
Thus
\begin{align*}
(\psi,\pi)^{(1)}(\phi(b))
&=(\psi,\pi)^{(1)}\Big(\sum_{i=1}^k  \Theta_{b\cdot F_i, F_i}\Big)
=\sum_{i=1}^k \psi(b\cdot F_i)\psi(F_i)^*\\
&=\sum_{i=1}^k\pi(b)\psi(F_i)\psi(F_i)^*=\pi(b)\Big(\sum_{i=1}^k S_iS_i^*\Big)=\pi(b),
\end{align*}
and so $(\psi,\pi)$ is Cuntz-Pimsner covariant. Using \eqref{conditiononpi} we have 
\[
\psi(F_i)=\sum_{i=1}^k S_j\pi(\langle F_j\,,\, F_i\rangle_K)=\sum_{i=1}^k S_jS_j^*\pi(1)S_i=S_i
\]
since $\pi$ is unital.
\end{proof}

\begin{thm}\label{ExelcpOn} Let $A:=\UHF(n^\infty)$ and $(A,\alpha, L)$ the Exel system described by \eqref{UHFaction}--\eqref{deftransfer}.
Then there is an isomorphism $\psi\times\pi$ of $A\rtimes_{\alpha,L}\N=\O(M_L)$ onto $\O_{nN}$.
\end{thm}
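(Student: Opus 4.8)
The plan is to exhibit $\psi\times\pi$ together with an explicit inverse, and then verify that the two maps compose to the identities; all the ingredients are already in place. Lemma~\ref{Eijon} gives the $nN$-element orthonormal basis $\{E_{ij}:1\le i\le n,\ 1\le j\le N\}$ of $M_L$, which is in particular a Parseval frame, and Lemmas~\ref{fromTtopi} and~\ref{lem-universal} are designed precisely to manufacture representations from Cuntz families.

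First I would let $\{s_{ij}:1\le i\le n,\ 1\le j\le N\}$ be the canonical Cuntz family generating $\O_{nN}$, apply Lemma~\ref{fromTtopi} with $C=\O_{nN}$ and $T_{ij}=s_{ij}$ to get a unital homomorphism $\pi:=\pi_T:A\to\O_{nN}$, and observe that the relation \eqref{charreps} it satisfies is exactly the hypothesis \eqref{conditiononpi} of Lemma~\ref{lem-universal} with $(B,\beta,K)=(A,\alpha,L)$ and $F_{ij}=E_{ij}$. Lemma~\ref{lem-universal} then produces a Cuntz-Pimsner covariant representation $(\psi,\pi)$ of $M_L$ in $\O_{nN}$ with $\psi(E_{ij})=s_{ij}$, and, since $A\rtimes_{\alpha,L}\N=\O(M_L)$, the universal property of the Cuntz-Pimsner algebra gives the homomorphism $\psi\times\pi:\O(M_L)\to\O_{nN}$. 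Surjectivity is then immediate, because the range already contains every $\psi(E_{ij})=s_{ij}$.

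For the inverse I would use the basis on the other side. Since $A$ is unital, $k_A$ is unital, so orthonormality of $\{E_{ij}\}$ gives $k_M(E_{ij})^*k_M(E_{kl})=k_A(\langle E_{ij},E_{kl}\rangle)=\delta_{ik}\delta_{jl}1$; moreover $\sum_{ij}\Theta_{E_{ij},E_{ij}}$ is the identity operator $\phi(1_A)$ on $M_L$, so Cuntz-Pimsner covariance yields $\sum_{ij}k_M(E_{ij})k_M(E_{ij})^*=k_A(1_A)=1$. Hence $\{k_M(E_{ij})\}$ is a Cuntz $nN$-family in $\O(M_L)$, and the universal property of $\O_{nN}$ gives a unital homomorphism $\rho:\O_{nN}\to\O(M_L)$ with $\rho(s_{ij})=k_M(E_{ij})$. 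Evaluating $(\psi\times\pi)\circ\rho$ on the generators $s_{ij}$ shows it is $\id_{\O_{nN}}$, so $\rho$ has a left inverse and is therefore injective.

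The genuine work, and the step I expect to be the main obstacle, is proving that $\rho$ is \emph{surjective}: together with the left inverse this forces $\rho$ to be an isomorphism and hence $\psi\times\pi=\rho^{-1}$. The range of $\rho$ is a $C^*$-subalgebra containing every $k_M(E_{ij})$ and hence, taking products, every $k_M(E_{\mu_1\lambda_1}\otimes_A\cdots\otimes_A E_{\mu_k\lambda_k})$. The key point is the identity, for $\mu,\nu\in\{1,\dots,n\}^k$, that the left action of $e_{\mu\nu}\otimes1_\infty$ on $M_L^{\otimes k}$ equals $\sum_{\lambda\in\{1,\dots,N\}^k}\Theta_{E_{\mu\lambda},E_{\nu\lambda}}$, where $E_{\mu\lambda}:=E_{\mu_1\lambda_1}\otimes_A\cdots\otimes_A E_{\mu_k\lambda_k}$; one checks this by evaluating both operators on the basis of $M_L^{\otimes k}$. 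Since $A$ acts on $M_L$ by compact operators, $\phi^{-1}(\K(M_L))=A$, and Cuntz-Pimsner covariance of the induced representation $(k_M^{\otimes k},k_A)$ of $M_L^{\otimes k}$ then gives $k_A(e_{\mu\nu}\otimes1_\infty)=\sum_\lambda k_M(E_{\mu\lambda})k_M(E_{\nu\lambda})^*$, which lies in the range of $\rho$. This places all of $k_A(A)$ in the range, and the reconstruction formula $m=\sum_{ij}E_{ij}\cdot\langle E_{ij},m\rangle$ then writes each $k_M(m)$ in terms of the $k_M(E_{ij})$ and $k_A(A)$, so the range is all of $\O(M_L)$. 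Verifying this covariance identity is the only truly computational point; everything else is bookkeeping against the lemmas already in hand.
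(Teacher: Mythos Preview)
Your argument is correct, and it diverges from the paper only at the injectivity step. Both proofs build $(\psi,\pi)$ identically from Lemmas~\ref{fromTtopi} and~\ref{lem-universal} and read off surjectivity from $\psi(E_{ij})=s_{ij}$. For injectivity, however, the paper appeals to the gauge-invariant uniqueness theorem \cite[Corollary~5.3]{BR}: it checks that $\pi_s$ is injective (by simplicity of $A$), that $K_\alpha=\phi^{-1}(\K(M_L))=A$, that $L$ is almost faithful (the second half of Lemma~\ref{propL}), and that the gauge action on $\O_{nN}$ implements the dual action. You instead construct an explicit inverse $\rho:\O_{nN}\to\O(M_L)$ using that $\{k_M(E_{ij})\}$ is a Cuntz family, verify $(\psi\times\pi)\circ\rho=\id$ on generators, and prove $\rho$ surjective. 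This buys you something real: you never use the almost-faithfulness of $L$, nor the external citation to \cite{BR}, so your route is more self-contained. The price is the surjectivity computation for $\rho$.

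One point to tighten: you invoke ``Cuntz--Pimsner covariance of the induced representation $(k_M^{\otimes k},k_A)$ of $M_L^{\otimes k}$'' to get $k_A(e_{\mu\nu}\otimes 1_\infty)=\sum_\lambda k_M^{\otimes k}(E_{\mu\lambda})k_M^{\otimes k}(E_{\nu\lambda})^*$. This is true but is not stated in the paper, so it deserves a line. The cleanest justification is induction on $k$: the case $k=1$ is exactly the Cuntz--Pimsner covariance defining $\O(M_L)$, and for $k\geq 2$ one factors the sum over $\lambda=(\lambda_1,\lambda')$, applies the inductive hypothesis to the inner $\lambda'$-sum to obtain $k_A(e_{\mu'\nu'}\otimes 1_\infty)$, absorbs it into the module action via $E_{\mu_1\lambda_1}\cdot(e_{\mu'\nu'}\otimes 1_\infty)=(e_{\mu\nu}\otimes 1_\infty)\cdot E_{\nu_1\lambda_1}$, and finishes with the $k=1$ case again. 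With that sentence added, your proof is complete.
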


\begin{proof}
We parametrize the set $\{1,\dots,nN\}$ by $\{ij:1\leq i\leq n,\;1\leq j\leq N\}$, and apply Lemma~\ref{fromTtopi} to the canonical Cuntz family $s=\{s_{ij}\}$  in $\O_{nN}$. This gives a unital homomorphism $\pi_s:A\to \O_{nN}$ such that \eqref{charreps} holds. The orthonormal basis $\{E_{ij}\}$ of Lemma~\ref{Eijon}  is in particular a Parseval frame, and so  Lemma~\ref{lem-universal}   gives a Cuntz-Pimsner covariant representation $(\psi,\pi_s)$ of $M_L$ in $\O_{nN}$ such that $\psi(E_{ij})=s_{ij}$. To see that $\psi\times\pi_s$ is injective, we verify the hypotheses of \cite[Corollary~5.3]{BR}. Since $A$ is simple and $\pi_s$ is nonzero, $\pi_s$ is injective. The reconstruction formula for  $\{E_{ij}\}$ implies that $\phi(a)=\sum_{i,j} \Theta_{a\cdot E_{ij},E_{ij}}$, and in particular that $\phi(A)\subset \K(M_L)$. Since $A$ is simple, the ideal $\overline {A\alpha(A)A}$ is all of $A$, and this implies that $K_\alpha=A=\phi^{-1}(\K(M_L))$. Lemma~\ref{propL} says that the transfer operator $L$ is almost faithful. The gauge action $\gamma$ on $\O_{nN}$ satisfies
\begin{equation}\label{inv}
\gamma_z(\psi(E_{ij}))=\gamma_z(s_{ij})=zs_{ij}=\psi(zE_{ij})=\psi(\widehat\alpha_z(E_{ij})).
\end{equation}
Thus \cite[Corollary~5.3]{BR} implies that $\psi\times \pi_s$ is an isomorphism of $A\rtimes_{\alpha, L}\N$ into $\O_{nN}$. Since the range of $\psi\times \pi_T$ contains every generator $s_{ij}=\pi_s(E_{ij})$, $\psi\times \pi_s$ is also surjective.
\end{proof}

\begin{remark}
Since the isomorphism of Theorem~\ref{ExelcpOn} intertwines the dual action $\widehat\alpha$ and the gauge action $\gamma$ on $\O_{nN}$ (see \eqref{inv}), it carries the fixed-point algebra $(A\rtimes_{\alpha,L}\N)^{\widehat\alpha}$ onto $\O_{nN}^\gamma=\UHF((nN)^\infty)$. So Corollary~\ref{Ex=St2} gives a description of $A\rtimes_{\alpha,L}\N$ as a Stacey crossed product by a corner endomorphism on $\UHF((nN)^\infty)$.
\end{remark}

The rank $N$ of the projection $p\in M_n(\C)$ in Theorem~\ref{ExelcpOn} is constrained by $1\leq N\leq n$. However, it is relatively easy to remove this constraint.

\begin{cor}
Let $m,k\in\N$, let $p\in M_m^{\otimes k}$ be a projection of rank $N$, and let $\tr$ be the normalised trace on $pM_m^{\otimes k}p$. Then there are an endomorphism $\alpha$ of $\UHF(m^\infty)=\varinjlim_j M_m^{\otimes kj}$such that
\[
\alpha(a_1\otimes\cdots\otimes a_j\otimes 1_\infty)=p\otimes a_1\otimes\cdots\otimes a_j\otimes 1_\infty,
\]
and a transfer operator $L$ for $\alpha$ such that 
\[
L(a_1\otimes\cdots\otimes a_j\otimes 1_\infty)=\tr(pa_1p)(a_2\otimes\cdots\otimes a_j\otimes 1_\infty).
\]
Then $\UHF(m^\infty)\rtimes_{\alpha, L}\N$ is isomorphic to $\O_{m^kN}$.
\end{cor}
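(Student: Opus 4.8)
The plan is to reduce everything to Theorem~\ref{ExelcpOn} by two moves: regrouping tensor factors so that the single-site algebra is $M_{m^k}$, and then conjugating away the fact that $p$ need not be diagonal. Put $n=m^k$ (so $n\ge 2$) and group the factors of $\UHF(m^\infty)=\varinjlim_j M_m^{\otimes kj}$ in blocks of $k$; this identifies $M_m^{\otimes k}$ with $M_n$ and $\UHF(m^\infty)$ with $\UHF(n^\infty)=\varinjlim_j M_n^{\otimes j}$. Under this identification the maps $\alpha$ and $L$ of the statement are of the form \eqref{UHFaction} and \eqref{deftransfer}, except that $p$ is now an arbitrary rank-$N$ projection in $M_n$ rather than the diagonal one. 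The existence of $\alpha$ and of the positive map $L$, together with the transfer-operator identity $L(\alpha(a)b)=aL(b)$, is proved verbatim as in Lemma~\ref{propL}: that argument uses only $p^2=p$ and never the diagonal form of $p$. Moreover the constraint $1\le N\le n$ of Theorem~\ref{ExelcpOn} is now automatic, since a nonzero projection in $M_n$ has rank between $1$ and $n=m^k$.

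First I would remove the hypothesis that $p$ is diagonal. Choose a unitary $u\in M_n$ with $p=up_0u^*$, where $p_0:=1_N\oplus 0_{n-N}$, and let $(\alpha_0,L_0)$ be the Exel data of Theorem~\ref{ExelcpOn} attached to $p_0$. Let $\Phi:=\bigotimes_{i=1}^\infty\Ad u$ be the infinite-tensor-product automorphism of $A=\UHF(n^\infty)$, determined on elementary tensors by $\Phi(a_1\otimes\cdots\otimes a_j\otimes 1_\infty)=(ua_1u^*)\otimes\cdots\otimes(ua_ju^*)\otimes 1_\infty$. I claim $\Phi$ is an isomorphism of the Exel system $(A,\alpha_0,L_0)$ onto $(A,\alpha,L)$, that is, $\Phi\circ\alpha_0=\alpha\circ\Phi$ and $\Phi\circ L_0=L\circ\Phi$. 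The first identity is immediate from $p=up_0u^*$. For the second, the only point to verify is $\tr(p\,ua_1u^*\,p)=\tr_0(p_0a_1p_0)$ for $a_1\in M_n$ (where $\tr_0$ is the normalised trace on $p_0M_np_0$); since $p\,ua_1u^*\,p=u(p_0a_1p_0)u^*$ and $\Ad u$ restricts to a $*$-isomorphism of $p_0M_np_0$ onto $pM_np$, it must carry the unique tracial state $\tr_0$ to the unique tracial state $\tr$, which is exactly this identity.

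Finally I would pass to the crossed products and invoke the theorem. An isomorphism of Exel systems transports the universal Toeplitz-covariant representation $(i,s)$ of $(A,\alpha,L)$ to the Toeplitz-covariant representation $(i\circ\Phi,s)$ of $(A,\alpha_0,L_0)$ (one checks $s\,(i\circ\Phi)(a)=(i\circ\Phi)(\alpha_0(a))\,s$ and $s^*(i\circ\Phi)(a)\,s=(i\circ\Phi)(L_0(a))$ directly), and this descends to the Exel crossed products because $\Phi$ carries $K_{\alpha_0}$ onto $K_\alpha$; since $A$ is simple both ideals equal $A$, so only the coisometry condition must be tracked, and it is preserved. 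Running the same argument with $\Phi^{-1}$ gives an inverse, so $A\rtimes_{\alpha,L}\N\cong A\rtimes_{\alpha_0,L_0}\N$. Theorem~\ref{ExelcpOn}, applied with $n=m^k$ and the diagonal rank-$N$ projection $p_0$, identifies the latter with $\O_{nN}=\O_{m^kN}$, and the corollary follows. The main obstacle is the verification that $\Phi$ intertwines the transfer operators—the trace computation above—and the routine check that an Exel-system isomorphism descends to the crossed products; the unitality and simplicity of $A$ make both steps painless.
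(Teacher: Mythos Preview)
Your proof is correct and follows essentially the same route as the paper: set $n=m^k$, identify $\UHF(m^\infty)$ with $\UHF(n^\infty)$, and reduce to Theorem~\ref{ExelcpOn}. The only difference is cosmetic: the paper folds your two moves into one by choosing the isomorphism $M_m^{\otimes k}\to M_n$ so that $p$ is sent directly to the diagonal projection $1_N\oplus 0$, whereas you first regroup and then conjugate by $\bigotimes\Ad u$; your version has the virtue of making explicit the check that the transfer operators are intertwined and that the Exel-system isomorphism descends to the crossed products.
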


\begin{proof}
The results of this section apply with $n=m^k$ to give a system  $(\UHF(n^\infty),\alpha, L)$ with $\UHF(n^\infty)\rtimes_{\alpha, L}\N$ isomorphic to $\O_{m^kN}$. Choosing an isomorphism of $M_m^{\otimes k}$ with $M_n$ gives an isomorphism of $\UHF(m^\infty)=\varinjlim_j M_m^{\otimes kj}$ onto $\UHF(n^\infty)=\varinjlim_j M_n^{\otimes j}$, and pulling $\alpha$ and $L$ over to $\UHF(m^\infty)$ gives an Exel system $(\UHF(m^\infty),\alpha, L)$ with the required properties.
\end{proof}


\appendix

\section{The core in a graph algebra}\label{analysecore}

Suppose that $E$ is an arbitrary directed graph, which could have infinite receivers, infinite emitters, sources and/or sinks.  In Theorem~\ref{genendodecomp}, we wanted a description of the core $C^*(E)^\gamma$ which did not depend on row-finiteness, and since we cannot recall seeing a suitable description in the literature, we give one here. As in the row-finite case, which is done in \cite{BPRS} and \cite{CBMS}, our description uses the subspaces
\[
\F_i:=\clsp\{ t_\mu t_\nu^*:|\mu|=|\nu|=i,\ s(\mu)=s(\nu)\},
\]
which one can easily check are in fact $C^*$-subalgebras. (By convention, $\F_0=\clsp\{p_v:v\in E^0\}$.)

We refer to vertices which are either infinite receivers or sources as ``singular vertices''. Recall that no Cuntz-Krieger relation is imposed at a singular vertex $v$, but if $v$ is an infinite receiver then we impose the inequality $p_v\geq \sum_{e\in F}t_et_e^*$ for every finite subset $F$ of $r^{-1}(v)$.

\begin{prop}\label{describecore}
Let $E$ be a directed graph, and define $\F_i$ as above.

\smallskip
\textnormal{(a)} For $i\geq 0$, $C_i:=\F_0+\F_1+\cdots+\F_i$ is a $C^*$-subalgebra of $C^*(E)^\gamma$, $C_i\subset C_{i+1}$ and $C^*(E)^\gamma=\overline{\bigcup_{i=0}^\infty C_i}$.

\smallskip
\textnormal{(b)} For each $i\geq 0$ and each $c\in C_i$, there are unique elements
\[
c_j\in \EE_j:=\clsp\{t_\mu t_\nu^*:|\mu|=|\nu|=j \text{ and }\ s(\mu)=s(\nu)\text{ is singular}\}
\]
for $0\leq j<i$ and $c_i\in\F_i$ such that $c=\sum_{j=0}^i c_j$.
\end{prop}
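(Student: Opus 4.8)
The plan is to prove existence and uniqueness separately, with the real work in uniqueness. The structural fact underlying everything is that, at each level $j$, the algebra $\F_j$ splits as an orthogonal direct sum
\[
\F_j=\EE_j\oplus\F_j^{\mathrm{reg}},\qquad \F_j^{\mathrm{reg}}:=\clsp\{t_\mu t_\nu^*:|\mu|=|\nu|=j,\ s(\mu)=s(\nu)\text{ non-singular}\},
\]
where the summands are orthogonal because matrix units $t_\mu t_\nu^*$ with different source vertices $s(\mu)=s(\nu)$ are orthogonal. For a non-singular vertex $v$ (so $0<|r^{-1}(v)|<\infty$) the Cuntz--Krieger relation gives the \emph{finite} sum $t_\mu t_\nu^*=\sum_{r(e)=v}t_{\mu e}t_{\nu e}^*\in\F_{j+1}$; since the spanning matrix units of $\F_j^{\mathrm{reg}}$ are thereby carried by the identity into $\F_{j+1}$ and $\F_j^{\mathrm{reg}}$ is a $C^*$-subalgebra, we get $\F_j^{\mathrm{reg}}\subseteq\F_{j+1}$. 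Finiteness of the sum is exactly the point where non-singularity is used.

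First I would prove existence by induction on $i$. The case $i=0$ is trivial, with $c_0=c\in\F_0$. For the inductive step, given $c\in C_i=C_{i-1}+\F_i$ I write $c=c'+f$ with $c'\in C_{i-1}$ and $f\in\F_i$, apply the inductive hypothesis to $c'$ to get $c'=\sum_{j<i-1}c'_j+c'_{i-1}$ with $c'_j\in\EE_j$ and $c'_{i-1}\in\F_{i-1}$, and then split $c'_{i-1}=e+g$ with $e\in\EE_{i-1}$ and $g\in\F_{i-1}^{\mathrm{reg}}\subseteq\F_i$. Collecting terms gives $c=\sum_{j<i-1}c'_j+e+(g+f)$, which is of the required form with top term $g+f\in\F_i$.

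The hard part is uniqueness, i.e.\ that the sum $\EE_0+\dots+\EE_{i-1}+\F_i$ is direct. My plan is to detect the summands using a faithful representation of $C^*(E)$ on $\ell^2(\partial E)$, where $\partial E$ is the boundary-path space consisting of the infinite paths together with the finite paths whose source is singular, and where $t_\mu t_\nu^*$ acts by $\delta_x\mapsto\delta_{\mu y}$ if $x=\nu y$ and by $0$ otherwise. The essential point --- and the reason the singular vertices are exactly the right ones --- is that a finite path $\nu$ is a boundary path precisely when $s(\nu)$ is singular, so the basis vectors $\delta_\nu$ indexed by length-$j$ paths with singular source are available to probe $\EE_j$, while a level-$\ell$ matrix unit $t_\kappa t_\lambda^*$ annihilates every $\delta_x$ with $|x|<\ell$. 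Suppose $\sum_{j=0}^i c_j=0$ with $c_j\in\EE_j$ for $j<i$ and $c_i\in\F_i$, and extract the components from the bottom up: assuming $c_0=\dots=c_{j-1}=0$, evaluate the relation on $\delta_\nu$ for each length-$j$ boundary path $\nu$. Every term $c_\ell$ with $\ell>j$, including $c_i\in\F_i$, kills $\delta_\nu$, so $c_j\delta_\nu=0$ for all such $\nu$; since $\EE_j=\bigoplus_{v\text{ singular}}\K\big(\ell^2\{\nu:|\nu|=j,\ s(\nu)=v\}\big)$ acts faithfully on exactly these vectors, $c_j=0$. Iterating gives $c_0=\dots=c_{i-1}=0$, whence $c_i=0$ as well.

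I expect the main obstacle to be this uniqueness step, and within it the bookkeeping needed to justify the faithful boundary-path representation and the precise action of matrix units on finite boundary paths; once that representation is in hand, the bottom-up extraction is routine. An alternative, if one prefers to avoid the boundary-path machinery, is to construct the detecting maps directly as compressions by the projections $\sum_{|\nu|=j,\,s(\nu)=v}t_\nu t_\nu^*$ at singular vertices $v$, together with the relation $\F_i\cap\F_{i+1}=\F_i^{\mathrm{reg}}$ (the content of Lemma~\ref{idcap}), but this amounts to the same computation.
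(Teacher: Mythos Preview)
Your existence argument for (b) matches the paper's in spirit: both push the non-singular part of $\F_j$ up into $\F_{j+1}$ via the Cuntz--Krieger relation, though the paper phrases this as the chain $C_i=\F_0+D_{1,i}=\EE_0+D_{1,i}=\EE_0+\EE_1+D_{2,i}=\cdots$ rather than your top-down induction.

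For uniqueness your route is genuinely different. The paper stays internal: Lemma~\ref{idcap} shows that $\F_j(v)\cap D_{j+1,i}$ is $\{0\}$ when $v$ is singular and all of $\F_j(v)$ otherwise, so $\F_j=\EE_j\oplus(\F_j\cap D_{j+1,i})$, and directness of $\EE_0+\cdots+\EE_{i-1}+\F_i$ follows by peeling off $\EE_0$, then $\EE_1$, and so on. The nontrivial direction of Lemma~\ref{idcap} is proved by a finite-trace argument inside $\bigoplus_w\F_{j+1}(w)$. Your approach replaces this with the boundary-path representation on $\ell^2(\partial E)$: a length-$j$ path $\nu$ with singular source is itself a basis vector $\delta_\nu$, annihilated by every $t_\kappa t_\lambda^*$ with $|\lambda|>j$, so testing against these vectors isolates the $\EE_j$ component. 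This is cleaner and sidesteps the trace argument entirely. One caution: the boundary-path representation is \emph{not} faithful on $C^*(E)$ in general (a single cycle with no entry is a counterexample), so your phrase ``a faithful representation of $C^*(E)$'' overstates what you have. Fortunately your argument never uses that claim---you only need that each $\EE_j$ acts faithfully on $\operatorname{span}\{\delta_\nu:|\nu|=j,\ s(\nu)\text{ singular}\}$, which is immediate from the matrix-unit computation, and the final step $c_i=0$ is purely algebraic. With that wording adjusted, the argument is sound.
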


Since $C_i\subset C_j$ for $i<j$, an element of $C_i$ has lots of the expansions described in (b). We refer to the one obtained by viewing $c$ as an element of $C_j$ as the \emph{$j$-expansion} of~$c$. 

\begin{proof}[Proof of Proposition~\ref{describecore}\textnormal{(a)}]
Since $\bigcup_i C_i$ is a vector space containing every element of the form $t_\mu t_\nu^*$ with $|\mu|=|\nu|$, it is dense in $C^*(E)^\gamma$, and we trivially have $C_i\subset C_{i+1}$. We prove that $C_i$ is a $C^*$-subalgebra by induction on $i$. For $i=0$, it is straightforward to see that $C_0=\F_0$ is a $C^*$-subalgebra. Suppose that $C_i$ is a $C^*$-subalgebra. If $|\kappa|=|\lambda|=i+1$ and $|\mu|=|\nu|\leq i+1$, then the formula
\[
(t_\mu t_\nu^*)(t_\kappa t_\lambda^*)=
\begin{cases}
0&\text{ unless $\kappa$ has the form $\nu\kappa'$}\\
t_{\mu\kappa'}t_\lambda^*&\text{if $\kappa=\nu\kappa'$}
\end{cases}
\]
shows that $\F_{i+1}$ is an ideal in the $C^*$-subalgebra $C^*(C_{i+1})$ of $C^*(E)^\gamma$ generated by $C_{i+1}$. Since $C_i$ is a $C^*$-subalgebra of $C^*(C_{i+1})$, the sum $C_{i+1}=C_i+\F_{i+1}$ is also a $C^*$-subalgebra (and in fact $C^*(C_{i+1})=C_i+\F_{i+1}=C_{i+1}$).
\end{proof}

For part (b), we need to do some preparation. The standard argument of \cite[\S2]{BPRS} or \cite[Chapter~3]{CBMS} shows that, for fixed $i$ and $v\in E^0$, 
\[
\{t_\mu t_\nu^*:|\mu|=|\nu|=i,\ s(\mu)=s(\nu)=v\}
\]
is a set of non-zero matrix units, and hence their closed span $\F_i(v)$ is a $C^*$-subalgebra of $\F_i$ isomorphic to $\K(\ell^2(E^i\cap s^{-1}(v)))$. Since $\F_i(v)\F_i(w)=\{0\}$ for $v\not=w$, $\F_i$ is the $C^*$-algebraic direct sum $\bigoplus_{v\in E^0}\F_i(v)$. For $j$ satisfying $0\leq j\leq i$, we set
\[
D_{j,i}:=\F_j+\cdots+\F_i=\clsp\{ t_\mu t_\nu^*:j\leq |\mu|=|\nu|\leq i,\ s(\mu)=s(\nu)\},
\]
which is another $C^*$-subalgebra by the argument in the previous proof. It is an ideal in $C_i$. Now we prove a lemma.

\begin{lemma}\label{idcap}
For every $i\geq 1$ and every $j<i$, we have
\[
\F_j(v)\cap D_{j+1,i}=
\begin{cases}
0&\text{if $v$ is a singular vertex}\\
\F_j(v)&\text{if $0<|r^{-1}(v)|<\infty$.}
\end{cases}
\]
\end{lemma}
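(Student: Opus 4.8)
The plan is to treat the two cases separately, exploiting the Cuntz--Krieger relation at $v$ in the regular case and a separating state in the singular case. In either case the inclusion $\F_j(v)\cap D_{j+1,i}\subseteq \F_j(v)$ is automatic, so the whole content is to decide whether $\F_j(v)$ is swallowed by $D_{j+1,i}$ or meets it only in $0$.

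First I would dispose of the regular case $0<|r^{-1}(v)|<\infty$. For paths $\mu,\nu$ of length $j$ with $s(\mu)=s(\nu)=v$, the identity $t_\mu=t_\mu p_v$ together with the Cuntz--Krieger relation $p_v=\sum_{r(e)=v}t_et_e^*$ (a \emph{finite} sum, since $v$ is a finite receiver) gives $t_\mu t_\nu^*=\sum_{r(e)=v}t_{\mu e}t_{\nu e}^*$, a finite sum of elements of $\F_{j+1}(s(e))\subseteq\F_{j+1}\subseteq D_{j+1,i}$ (here $j+1\le i$ because $j<i$). Since such products span a dense subspace of $\F_j(v)$ and $D_{j+1,i}$ is closed, $\F_j(v)\subseteq D_{j+1,i}$, whence $\F_j(v)\cap D_{j+1,i}=\F_j(v)$. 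This is exactly where finiteness of $r^{-1}(v)$ is used.

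For the singular case I would first compress. Take $x\in\F_j(v)\cap D_{j+1,i}$ and, for paths $\mu,\mu'$ of length $j$ with source $v$, consider $t_\mu^* x t_{\mu'}$. Reading $x$ as an element of $\F_j(v)\cong\K(\ell^2(E^j\cap s^{-1}(v)))$ shows $t_\mu^* x t_{\mu'}=c_{\mu\mu'}p_v$, where $c_{\mu\mu'}$ is the $(\mu,\mu')$ matrix coefficient of $x$. On the other hand, writing $x$ as a limit of sums of $t_\kappa t_\lambda^*$ with $j+1\le|\kappa|=|\lambda|\le i$ and collapsing $t_\mu^* t_\kappa$ and $t_\lambda^* t_{\mu'}$ shows $t_\mu^* x t_{\mu'}$ lies in $G_v:=\clsp\{t_\kappa t_\lambda^*:|\kappa|=|\lambda|\ge 1,\ r(\kappa)=r(\lambda)=v\}$. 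Thus it suffices to prove that $c\,p_v\in G_v$ forces $c=0$; since $x=0$ iff all its matrix coefficients vanish, this yields $\F_j(v)\cap D_{j+1,i}=0$. When $v$ is a source, $r^{-1}(v)=\emptyset$ forces $G_v=\{0\}$, and we are done immediately (as $p_v\ne 0$).

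The remaining case, an infinite receiver, is the main obstacle: I must exhibit a state $\omega$ on $C^*(E)$ with $\omega(p_v)=1$ and $\omega(t_et_e^*)=0$ for every $e$ with $r(e)=v$. Given such an $\omega$, for each generator $t_\kappa t_\lambda^*$ of $G_v$ I write $t_\kappa t_\lambda^*=(t_et_e^*)t_\kappa t_\lambda^*$ with $e$ the first edge of $\kappa$; then $\omega(t_et_e^*)=0$ and Cauchy--Schwarz for the form $(a,b)\mapsto\omega(a^*b)$ force $\omega(t_\kappa t_\lambda^*)=0$, so $\omega$ vanishes on $G_v$, while $\omega(c\,p_v)=c$; hence $c\,p_v\in G_v$ gives $c=0$. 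To build $\omega$ I would use the boundary-path representation: let $\partial E$ consist of the infinite paths together with the finite paths (including the length-zero paths at singular vertices) whose source is singular, and define a Cuntz--Krieger family on $\ell^2(\partial E)$ by $p_w\delta_x=\delta_x$ when $r(x)=w$, $t_e\delta_x=\delta_{ex}$ when $s(e)=r(x)$, and $t_e^*\delta_x=\delta_{x'}$ when $x=ex'$. One checks this is a genuine Cuntz--Krieger $E$-family precisely because no trivial path sits at a regular vertex, so the relation holds at every regular vertex; the universal property of $C^*(E)$ then yields a representation $\pi$. Because $v$ is singular, the length-zero path $v$ lies in $\partial E$, and $\delta_v$ is a gap vector: $\pi(p_v)\delta_v=\delta_v$ while $\pi(t_et_e^*)\delta_v=0$ for all $e$ with $r(e)=v$. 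Setting $\omega=\langle\pi(\cdot)\delta_v,\delta_v\rangle$ completes the proof. The crux is this construction: it is exactly the absence of a Cuntz--Krieger relation at singular vertices that leaves room for the gap vector, and hence for $p_v$ to escape $G_v$.
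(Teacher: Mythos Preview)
Your proof is correct, but the route in the singular case is genuinely different from the paper's.

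Both arguments dispatch the regular case the same way, via the Cuntz--Krieger relation at $v$. For the singular case, however, the paper works entirely inside the algebra: it first observes that $\F_j(v)\cap D_{j+1,i}$ is an ideal in the simple algebra $\F_j(v)\cong\K(\ell^2(E^j\cap s^{-1}(v)))$, hence is either $0$ or everything, and then proves the contrapositive. Assuming the intersection is all of $\F_j(v)$, it shows first that some product $(t_\mu t_\mu^*)(t_\kappa t_\lambda^*)$ is nonzero (ruling out sources), and then that $t_\mu t_\mu^*\in\F_{j+1}$. The punchline is a trace argument: a projection in $\F_{j+1}=\bigoplus_w\K(\ell^2(E^{j+1}\cap s^{-1}(w)))$ has finite support over $w$ and finite rank in each block, hence finite trace, while $t_\mu t_\mu^*$ dominates the $|r^{-1}(v)|$ orthogonal rank-one projections $t_{\mu e}t_{\mu e}^*$; so $|r^{-1}(v)|<\infty$.

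Your approach instead compresses by $t_\mu^*(\cdot)t_{\mu'}$ to reduce to showing $p_v\notin G_v$, and then produces an external witness: the vector state at the trivial boundary path $v$ in the boundary-path representation, which sees $p_v$ but annihilates every $t_et_e^*$ with $r(e)=v$. This is a perfectly good argument, and the boundary-path model makes transparent \emph{why} the gap at a singular vertex exists. The paper's proof, by contrast, is self-contained (no representation needs to be built or checked) and exploits the simplicity of $\F_j(v)$ to avoid having to kill every matrix coefficient separately. Each approach has its virtues: yours is more constructive and conceptually tied to the path space; the paper's is shorter and purely algebraic.
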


\begin{proof}
The result is trivially true if $\F_j(v)$ is $\{0\}$, so we suppose it isn't. Since $\F_j(v)\cap D_{j+1,i}$ is an ideal in $\F_j(v)$, it is either $\{0\}$ or $\F_j(v)$.    
First suppose that $v$ is not a singular vertex. Then the Cuntz-Krieger relation at $v$ implies that $\F_j(v)\subset D_{j+1,i}$, and hence $\F_j(v)\cap D_{j+1,i}=\F_j(v)$.

Now suppose that $\F_j(v)\cap D_{j+1,i}=\F_j(v)$; we will show that $v$ is not singular. Choose $\mu\in $$E^j$ with $s(\mu)=v$. Then $t_\mu t_\mu^*$ is a non-zero element of $D_{j+1,i}$, so there exist $\kappa$ and $\lambda$ satisfying $j+1\leq|\kappa|=|\lambda|\leq i$ and $(t_\mu t_\mu^*)(t_\kappa t_\lambda^*)\not= 0$. But this implies that $\kappa$ has the form $\mu\kappa'$, and $v=s(\mu)$ cannot be a source. It also implies that $t_\mu t_\mu^*t_\kappa t_\kappa^*$ is non-zero, and hence so is the larger projection $t_\mu t_\mu^* t_{\mu\kappa_{j+1}}t_{\mu\kappa_{j+1}}^*$. Thus $\F_j(v)\cap \F_{j+1}\neq \{0\}$, and since $\F_j(v)\cap \F_{j+1}$ is  an  ideal in $\F_j(v)$, it follows that $\F_j(v)\cap \F_{j+1}=\F_j(v)$.

We now know that the projection $t_\mu t_\mu^*$ belongs to $\F_{j+1}$, and hence is a projection in a $C^*$-algebraic direct sum $\bigoplus_{w\in E^0}\F_{j+1}(w)$. The norms of elements in this direct sum are arbitrarily small off finite subsets of $E^0$, and projections have norm $0$ or $1$, so there are a finite subset $F$ of $E^0$ and projections $q_w\in \F_{j+1}(w)$ such that $t_\mu t_\mu^*=\sum_{w\in F}q_w$. Each $q_w$ is a projection in $\F_{j+1}(w)=\K(\ell^2(E^{j+1}\cap s^{-1}(w)))$, and hence has finite trace. Thus $t_\mu t_\mu^*$ has finite trace. On the other hand, for every edge $e$ with $r(e)=s(\mu)=v$, we have $t_\mu t_\mu^*\geq t_{\mu e}t_{\mu e}^*$; since $\{t_{\mu e}t_{\mu e}^*:r(e)=s(\mu)\}$ is a family of mutually orthogonal projections of trace $1$, we have
\[
\Tr (t_\mu t_\mu^*)\geq \sum_{r(e)=s(\mu)}\Tr(t_{\mu e}t_{\mu e}^*)=|r^{-1}(s(\mu))|=|r^{-1}(v)|.
\]
Since we have already eliminated the possibility that $v$ is a source, this proves that it is not a singular vertex, as required.
\end{proof}

\begin{proof}[Proof of Proposition~\ref{describecore}\textnormal{(b)}]
Lemma~\ref{idcap} implies that
\[
\F_j\cap D_{j+1,i}=\bigoplus\{\F_j(v):0<|r^{-1}(v)|<\infty\},
\]
and that $\F_j$ is the direct sum of $\F_j\cap D_{j+1,i}$ and 
\[
\EE_j=\bigoplus\{\F_j(v):\text{$v$ is a singular vertex}\}.
\]
Since $D_{j,i}=\F_j+D_{j+1,i}$, we have
\begin{align*}
C_i&=\F_0+D_{1,i}=(\EE_0+(\F_0\cap D_{1,i}))+D_{1,i}=\EE_0+D_{1,i}\\
&=\EE_0+(\EE_1+(\F_1\cap D_{2,i}))
=\EE_0+\EE_1+D_{2,i}\\
&\ \vdots \\
&=\EE_0+\cdots \EE_{i-1}+D_{i,i}=\EE_0+\cdots \EE_{i-1}+\F_i,
\end{align*}
which shows that $c$ has the claimed expansion.

To establish uniqueness, suppose that $c_j,d_j\in \EE_j$ for $j<i$, that $c_i, d_i\in \F_i$, and that $\sum_{j=0}^ic_j=\sum_{j=0}^id_j$. Then $c_0-d_0=\sum_{j=1}^i(d_j-c_j)$ belongs to $\EE_0\cap \F_0$, because the left-hand side does, and to $D_{1,i}$, because the right-hand side does. Since $\F_0=\EE_0\oplus(\F_0\cap D_{1,i})$, we have $\EE_0\cap (\F_0\cap D_{1,i})=\{0\}$, and we deduce that $c_0=d_0$ and $\sum_{j=1}^ic_j=\sum_{j=1}^id_j$. Now an induction argument using $\EE_j\cap (F_j\cap D_{j+1,i})=\{0\}$ gives the result. 
\end{proof}


\end{document}